\newtheorem{theorem}{Theorem}
\newtheorem{question}{Question}
\newtheorem{claim}{Claim}
\newtheorem{subclaim}{Claim}[claim]
\newtheorem{proposition}{Proposition}
\newtheorem{remark}{Remark}
\newcommand{\st}{{\rm st}}
\newcommand{\CG}{{\rm CG}}
\newcommand{\tc}{{\rm tc}}
\newcommand{\odd}{{\rm odd}}
\newcommand{\DB}{{\rm DB}}
\newcommand{\TB}{{\rm TB}}
\newcommand{\DC}{{\rm DC}}
\newcommand{\even}{{\rm even}}
\newcommand{\smalls}{{\rm small}}
\newcommand{\sorder}{{\rm small}}
\newcommand{\lorder}{{\rm large}}
\newcommand{\mdom}{\dot\gamma}
\newcommand{\midom}{\overline{i}}
\newcommand{\mV}{\overline{V}}
\newcommand{\labshow}[1]{\label{#1} {\footnotesize{\texttt{(#1)}}}}
\newcommand{\markn}{\overline{n}}
\newcommand{\w}{{\rm w}}
\newcommand{\cp}{\,\square\,}
\newcommand{\vc}{{\rm vc}}
\newcommand{\proof}{\noindent\textbf{Proof. }}
\newcommand{\modo}{{\rm mod}}
\newcommand{\pn}{{\rm pn}}
\newcommand{\epn}{{\rm epn}}
\newcommand{\ipn}{{\rm ipn}}
\newcommand{\expansion}{\mathop{\mathrm{exp}}}
\newcommand{\cub}{{\rm cubic}}
\newcommand{\barS}{{\overline{S}}}
\newcommand{\qed}{$\Box$}
\newcommand{\QED}{$\Box$}
\newcommand{\QEDmark}{\mbox{\textsc{qed}}}
\newcommand{\smallqed}{{\tiny ($\Box$)}}
\newcommand{\1}{\vspace{0.1cm}}
\newcommand{\2}{\vspace{0.2cm}}
\newcommand{\3}{\vspace{0.3cm}}
\newcommand{\5}{\vspace{0.05cm}}
\def \nH {n_{_H}}
\def \nF {n_{_F}}
\newcommand{\tdom}{{\rm tdom}}
\newcommand{\barX}{{\overline{X}}}
\newcommand{\barx}{{\overline{x}}}
\newcommand{\barG}{{\overline{G}}}
\newcommand{\barY}{{\overline{Y}}}
\newcommand{\cB}{{\cal B}}
\newcommand{\cC}{{\cal C}}
\newcommand{\cD}{{\cal D}}
\newcommand{\cF}{{\cal F}}
\newcommand{\cL}{{\cal L}}
\newcommand{\cS}{{\cal S}}
\newcommand{\cG}{{\cal G}}
\newcommand{\diam}{{\rm diam}}
\newcommand{\Dbar}{\overline{D}}
\newcommand{\Ibar}{\overline{I}}
\newcommand{\cubic}{{\rm cubic}}
\newcommand{\coro}{{\rm cor}}
\newcommand{\CL}{{\rm CL}}
\newcommand{\ML}{{\rm ML}}
\newcommand{\cN}{{\mathcal{N}}}
\newcommand{\cT}{{\mathcal{T}}}
\newcommand{\gt}{\gamma_t}
\newcommand{\pr}{\gamma_{\rm pr}}
\newcommand{\semiT}{\gamma_{\rm t2}}
\newcommand{\semiP}{\gamma_{\rm pr2}}
\newcommand{\nsemiP}{\gamma_{\rm npr2}}
\newcommand{\Mike}[1]{\textcolor{blue}{#1} }
\newcommand{\cH}{{\cal H}}
\def \kkk {{\Huge\sf --- !!! --- }}
\def \komm {\Huge\sf }
\renewcommand{\labelenumi}{\rm(\alph{enumi})}
\let\oldenumerate\enumerate
\renewcommand{\enumerate}{
  \oldenumerate
  \setlength{\itemsep}{1pt}
  \setlength{\parskip}{0pt}
  \setlength{\parsep}{0pt}
}
\def\vertex(#1){\put(#1){\circle*{2}}}
\def\vertexo(#1){\put(#1){\circle{2}}}
\def\vert(#1){\put(#1){\circle*{1.5}}}
\def\verto(#1){\put(#1){\circle{1.5}}}
\def\lab(#1)#2{\put(#1){\makebox(0,0)[c]{#2}}}
\newcommand{\gst}{\gamma_{\rm st}}
\begin{document}

\title{Strong domination number of graphs from primary subgraphs }

\author{
	Saeid Alikhani$^{1}$	\and
Nima Ghanbari$^{2,}$\footnote{Corresponding author}
\and
Michael A. Henning$^{3}$
}

\date{\today}

\maketitle

\begin{center}

$^1$Department of Mathematical Sciences, Yazd University, 89195-741, Yazd, Iran\\
$^2$Department of Informatics, University of Bergen, P.O. Box 7803, 5020 Bergen, Norway\\	
$^{3}$Department of Mathematics and Applied Mathematics, University of Johannesburg, Auckland Park, 2006 South Africa.\\

\medskip
\medskip
{\tt  $^1$alikhani@yazd.ac.ir
~~
$^{2}$Nima.Ghanbari@uib.no
~~
$^3$mahenning@uj.ac.za
 }

\end{center}


\begin{abstract}

A set $D$ of vertices is a strong dominating set in a graph $G$, if for every vertex $x\in  V(G) \setminus D$ there is a vertex $y\in D$ with $xy\in E(G)$ and $\deg(x) \leq \deg(y)$. The strong domination number $\gst(G)$ of $G$ is the minimum cardinality of a strong dominating set in $G$. Let $G$ be a connected graph constructed from pairwise disjoint connected graphs $G_1,\ldots ,G_k$ by selecting a vertex of $G_1$, a vertex of $G_2$, and identifying these two vertices, and thereafter continuing in this manner inductively. The graphs $G_1,\ldots ,G_k$ are the primary subgraphs of $G$. In this paper, we study the strong domination number of $K_r$-gluing of two graphs and investigate the strong domination number for some particular cases of graphs from their primary subgraphs.
\end{abstract}

\indent
{\small \textbf{Keywords:}  Strong domination number, $K_r$-gluing, chain, link.} \\
\indent {\small \textbf{AMS subject classification:} 05C15, 05C25, 05C69}


\newpage
\section{Introduction}

Let $G$ be a graph with vertex set $V(G)$ and edge set $E(G)$, and of order $n(G) = |V(G)|$ and size $m(G) = |E(G)|$. If the graph $G$ is clear from context, we write $V$ and $E$ rather than $V(G)$ and $E(G)$, and write $G = (V,E)$. Two adjacent vertices in $G$ are called \emph{neighbors}. We denote the degree of a vertex $v$ in $G$ by $\deg_G(v)$. For $k \ge 1$ an integer, we use the standard notation $[k] = \{1,\ldots,k\}$.

A set $S$ of vertices in $G$ is a \emph{dominating set} if every vertex not in $S$ is adjacent to a vertex in~$S$. The \emph{domination number} $\gamma(G)$ of $G$ is the minimum cardinality of a dominating set in $G$. For recent books on domination in graphs, we refer the reader to~\cite{HaHeHe-20,HaHeHe-21,HaHeHe-23,HeYe-book}.

The set $S$ is a \emph{strong dominating set} of $G$ if for every vertex $x \in \overline{D} = V \setminus D$, there exists a vertex $y \in D$ that is adjacent to~$x$ and such that $\deg_G(x) \le \deg_G(y)$. The \emph{strong domination number} $\gst(G)$ of $G$ is the minimum cardinality of a strong dominating set in $G$. A $\gst$-\emph{set} of $G$ is a strong dominating set of $G$ of minimum cardinality $\gst(G)$. If $D$ is a strong dominating set in~$G$, then we say that a vertex $u \in \overline{D}$ is \emph{strongly dominated} by a vertex $v \in D$ if $uv \in E(G)$ and $\deg(u) \le \deg(v)$.

The strong domination number was introduced in 1996 by Sampathkumar and Latha~\cite{SaLa-96}. Earlier results on strong domination in graphs were presented by, among others, Hattingh and Henning~\cite{HaHe-98} in 1998, Rautenbach~\cite{Ra-99,Ra-00} in 1999, and Rautenbach and Zverovich~\cite{RaZv-01} in 2001. Recent results are given, for example, by G\"{o}lpek and  Ayta\c{c}~\cite{GoAy-20} in 2020, Do\u{g}an Durgun and Kurt~\cite{DoKu-22} and Alikhani, Ghanbari, and Zaherifar~\cite{AlGhZa-22+} in 2022.

In 1996 Sampathkumar and Latha~\cite{SaLa-96} also defined a set $D\subset V$ to be a \emph{weak dominating set} of $G$, if every vertex $v\in V\setminus S$ is adjacent to a vertex $u\in D$ such that $\deg_G(v) \ge \deg_G(u)$. Earlier results on weak domination in graphs were presented by, among others, Hattingh and Laskar~\cite{HaLa-98} and Rautenbach~\cite{Ra-98} in 1998.

The concept of strong and weak domination has subsequently been extensively studied in the literature. Several papers (see, for example, Boutrig and Chellali~\cite{Boutrig}) present relations between the weak and strong domination numbers of a graph, as well as bounds on these parameters. Alikhani, Ghanbari, and Zaherifar~\cite{AlGhZa-22+} examined the effects on $\gst(G)$ when $G$ is modified by the edge deletion, the edge subdivision and the edge contraction, and also studied the strong domination number of $k$-subdivision of $G$.

Motivated by enumerating the number of dominating sets of a graph and by domination polynomial (see, for example, \cite{euro,saeid1}), the enumeration of  the strong dominating sets for certain graphs was studied by Zaherifar, Alikhani, and Ghanbari~\cite{ZaAlGh-23} in 2023. The strong domination number of graph operations are natural subjects for study. For example, the strong domination number of join and corona products is studied in~\cite{ZaAlGh-23}.

Let $G_1$ and $G_2$ be two graphs and $r\in \mathbb{N} \cup \{0\}$ with $r \leq \min \{\omega(G_1),\omega(G_2)\}$, where $\omega(G)$ is the clique number of $G$. A clique in a graph $G$ is a subset of vertices of $G$ such that every two distinct vertices in the clique are adjacent. For $r \ge 2$, an $r$-clique is a clique of cardinality~$r$. Choose an $r$-clique from each graph $G_i$, $i=1,2$, and form a new graph $G$ from the union of $G_1$ and $G_2$ by identifying the two chosen $r$-cliques. The graph $G$ is called an \emph{$r$-gluing} of $G_1$ and $G_2$ and denoted by $G_1\cup_{K_r} G_2$. If $r=0$, then $G_1\cup_{K_0}G_2$ is simply the disjoint union of $G_1$ and $G_2$. The graph operation $G_1 \cup_{K_1} G_2$ is called \emph{vertex gluing}, and the graph operation $G_1 \cup_{K_2} G_2$ is called \emph{edge gluing}. Notice that there are sometimes several ways to $r$-glue two graphs together (see Figure~\ref{GcupHmorethanone}).  Zykov~\cite{Zykov} provided a shortcut for evaluating the chromatic polynomial of a graph $G$, if $G$ is a $K_r$-gluing of some graphs. Ghanbari and Alikhani~\cite{Nima2} obtained some results on the total dominator coloring of the $r$-gluing of two graphs.

\begin{figure}
	\begin{center}
		\psscalebox{0.6 0.6}
		{
			\begin{pspicture}(0,-5.5014424)(18.394232,1.0956731)
			\psdots[linecolor=black, dotsize=0.4](0.19711533,-0.70144224)
			\psdots[linecolor=black, dotsize=0.4](1.7971153,-0.70144224)
			\psdots[linecolor=black, dotsize=0.4](1.7971153,-2.3014421)
			\psdots[linecolor=black, dotsize=0.4](0.19711533,-2.3014421)
			\psline[linecolor=black, linewidth=0.08](0.19711533,-0.70144224)(1.7971153,-0.70144224)(1.7971153,-2.3014421)(0.19711533,-2.3014421)(0.19711533,-0.70144224)(0.19711533,-0.70144224)
			\psline[linecolor=black, linewidth=0.08](1.7971153,-0.70144224)(0.19711533,-2.3014421)(0.19711533,-2.3014421)
			\psline[linecolor=black, linewidth=0.08](0.19711533,-0.70144224)(1.7971153,-2.3014421)(1.7971153,-2.3014421)
			\psdots[linecolor=black, dotsize=0.4](1.7971153,0.8985577)
			\psdots[linecolor=black, dotsize=0.4](0.19711533,0.8985577)
			\psdots[linecolor=black, dotsize=0.4](1.7971153,-3.9014423)
			\psdots[linecolor=black, dotsize=0.4](0.19711533,-3.9014423)
			\psdots[linecolor=black, dotsize=0.4](4.1971154,-2.3014421)
			\psdots[linecolor=black, dotsize=0.4](5.7971153,-2.3014421)
			\psdots[linecolor=black, dotsize=0.4](4.997115,-0.70144224)
			\psline[linecolor=black, linewidth=0.08](4.1971154,-2.3014421)(4.997115,-0.70144224)(5.7971153,-2.3014421)(4.1971154,-2.3014421)(4.1971154,-2.3014421)
			\psdots[linecolor=black, dotsize=0.4](4.997115,0.8985577)
			\psdots[linecolor=black, dotsize=0.4](10.197115,-0.70144224)
			\psdots[linecolor=black, dotsize=0.4](11.797115,-0.70144224)
			\psdots[linecolor=black, dotsize=0.4](11.797115,-2.3014421)
			\psdots[linecolor=black, dotsize=0.4](10.197115,-2.3014421)
			\psline[linecolor=black, linewidth=0.08](10.197115,-0.70144224)(11.797115,-0.70144224)(11.797115,-2.3014421)(10.197115,-2.3014421)(10.197115,-0.70144224)(10.197115,-0.70144224)
			\psline[linecolor=black, linewidth=0.08](11.797115,-0.70144224)(10.197115,-2.3014421)(10.197115,-2.3014421)
			\psline[linecolor=black, linewidth=0.08](10.197115,-0.70144224)(11.797115,-2.3014421)(11.797115,-2.3014421)
			\psdots[linecolor=black, dotsize=0.4](11.797115,0.8985577)
			\psdots[linecolor=black, dotsize=0.4](10.197115,0.8985577)
			\psdots[linecolor=black, dotsize=0.4](11.797115,-3.9014423)
			\psdots[linecolor=black, dotsize=0.4](10.197115,-3.9014423)
			\psline[linecolor=black, linewidth=0.08](11.797115,-2.3014421)(11.797115,-3.9014423)(10.197115,-3.9014423)(10.197115,-2.3014421)(10.197115,-2.3014421)
			\psdots[linecolor=black, dotsize=0.4](12.997115,-0.70144224)
			\psdots[linecolor=black, dotsize=0.4](15.397116,-0.70144224)
			\psdots[linecolor=black, dotsize=0.4](16.997116,-0.70144224)
			\psdots[linecolor=black, dotsize=0.4](16.997116,-2.3014421)
			\psdots[linecolor=black, dotsize=0.4](15.397116,-2.3014421)
			\psline[linecolor=black, linewidth=0.08](15.397116,-0.70144224)(16.997116,-0.70144224)(16.997116,-2.3014421)(15.397116,-2.3014421)(15.397116,-0.70144224)(15.397116,-0.70144224)
			\psline[linecolor=black, linewidth=0.08](16.997116,-0.70144224)(15.397116,-2.3014421)(15.397116,-2.3014421)
			\psline[linecolor=black, linewidth=0.08](15.397116,-0.70144224)(16.997116,-2.3014421)(16.997116,-2.3014421)
			\psdots[linecolor=black, dotsize=0.4](16.997116,0.8985577)
			\psdots[linecolor=black, dotsize=0.4](15.397116,0.8985577)
			\psdots[linecolor=black, dotsize=0.4](16.997116,-3.9014423)
			\psdots[linecolor=black, dotsize=0.4](15.397116,-3.9014423)
			\psline[linecolor=black, linewidth=0.08](16.997116,-2.3014421)(16.997116,-3.9014423)(15.397116,-3.9014423)(15.397116,-2.3014421)(15.397116,-2.3014421)
			\psdots[linecolor=black, dotsize=0.4](18.197115,-2.3014421)
			\rput[bl](0.7971153,-5.401442){$G$}
			\rput[bl](4.8171153,-5.421442){$H$}
			\rput[bl](10.317116,-5.4814425){$G\cup_{K_3} H$}
			\rput[bl](15.457115,-5.5014424){$G\cup_{K_3} H$}
			\psline[linecolor=black, linewidth=0.04](0.19711533,0.8985577)(0.19711533,0.8985577)
			\psline[linecolor=black, linewidth=0.08](0.19711533,0.8985577)(0.19711533,-0.70144224)(0.19711533,-0.70144224)
			\psline[linecolor=black, linewidth=0.08](1.7971153,0.8985577)(1.7971153,-0.70144224)(1.7971153,-0.70144224)
			\psline[linecolor=black, linewidth=0.08](0.19711533,-2.3014421)(0.19711533,-3.9014423)(1.7971153,-3.9014423)(1.7971153,-2.3014421)(1.7971153,-2.3014421)
			\psline[linecolor=black, linewidth=0.08](4.997115,0.8985577)(4.997115,-0.70144224)(4.997115,-0.70144224)
			\psline[linecolor=black, linewidth=0.08](10.197115,0.8985577)(10.197115,-0.70144224)(10.197115,-0.70144224)
			\psline[linecolor=black, linewidth=0.08](11.797115,0.8985577)(11.797115,-0.70144224)(11.797115,-0.70144224)
			\psline[linecolor=black, linewidth=0.08](11.797115,-0.70144224)(12.997115,-0.70144224)(12.997115,-0.70144224)
			\psline[linecolor=black, linewidth=0.08](15.397116,0.8985577)(15.397116,-0.70144224)(15.397116,-0.70144224)
			\psline[linecolor=black, linewidth=0.08](16.997116,0.8985577)(16.997116,-0.70144224)(16.997116,-0.70144224)
			\psline[linecolor=black, linewidth=0.08](16.997116,-2.3014421)(18.197115,-2.3014421)(18.197115,-2.3014421)
			\end{pspicture}
		}
	\end{center}
	\caption{Graphs $G$, $H$ and all non-isomorphic graphs $G\cup_{K_3}H$, respectively.} \label{GcupHmorethanone}
\end{figure}
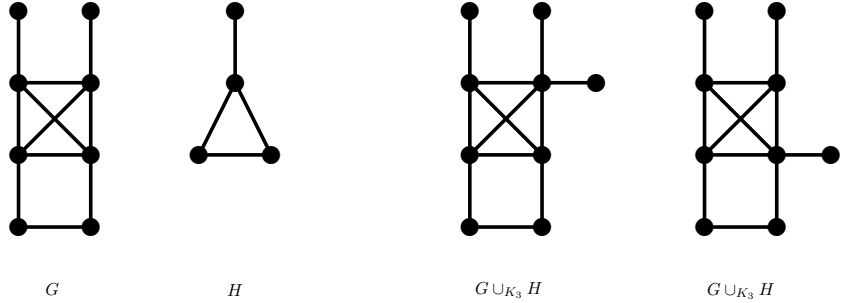

Let $G$ be a connected graph constructed from pairwise disjoint connected graphs $G_1,\ldots ,G_k$ as follows. Select a vertex of $G_1$, a vertex of $G_2$, and identify these two vertices. Thereafter, select a vertex in the resulting graph and select a vertex of $G_3$, and identify these two vertices.  We continue in this manner inductively to construct the graph $G$. Note that the graph $G$ constructed in this way has a tree-like structure, the $G_i$'s being its building stones.  We say that the resulting graph $G$ is obtained by point-attaching from $G_1,\ldots , G_k$ and that the graphs $G_i$'s are the primary subgraphs of $G$ (see, for example,~\cite{Alikhani1,MATCH,Nima0,Moster}).

In this paper, we study the strong domination number of $K_r$-gluing of two graphs and some particular cases of graphs obtained by their primary subgraphs.

\section{$K_r$-gluing}

In this section, we study the strong domination number of $K_r$-gluing of two graphs. By linearity, we have the following result on the strong domination number of the disjoint union of two graphs.

\begin{proposition}
\label{Prop:disconnected}
If $G_1$ and $G_2$ are graphs, then $\gst(G_1\cup_{K_0}G_2) =  \gst(G_1)+\gst(G_2)$.
\end{proposition}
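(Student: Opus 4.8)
The plan is to establish the two inequalities $\gst(G_1\cup_{K_0}G_2) \le \gst(G_1)+\gst(G_2)$ and $\gst(G_1\cup_{K_0}G_2) \ge \gst(G_1)+\gst(G_2)$ separately. The single structural fact that drives everything is that the disjoint union $G = G_1\cup_{K_0}G_2$ adds no edges between $V(G_1)$ and $V(G_2)$, so that $\deg_G(v) = \deg_{G_i}(v)$ for every vertex $v \in V(G_i)$, and every edge of $G$ lies entirely inside one of the two parts. Because the strong domination condition refers both to adjacency and to degrees, it is precisely this degree preservation that makes the two notions of strong domination --- within a component and within the whole union --- coincide.

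For the upper bound, I would take a $\gst$-set $D_1$ of $G_1$ and a $\gst$-set $D_2$ of $G_2$, and verify that $D_1 \cup D_2$ is a strong dominating set of $G$. Indeed, any vertex $x \in V(G) \setminus (D_1\cup D_2)$ belongs to exactly one $V(G_i)$; it is strongly dominated in $G_i$ by some $y \in D_i$ with $xy \in E(G_i) \subseteq E(G)$ and $\deg_{G_i}(x) \le \deg_{G_i}(y)$, and by degree preservation the same $y$ strongly dominates $x$ in $G$. Hence $\gst(G) \le |D_1|+|D_2| = \gst(G_1)+\gst(G_2)$.

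For the lower bound, I would start from an arbitrary $\gst$-set $D$ of $G$ and set $D_i = D \cap V(G_i)$ for $i\in\{1,2\}$. The claim is that each $D_i$ is a strong dominating set of $G_i$: given $x \in V(G_i)\setminus D_i$, the vertex $x$ is strongly dominated in $G$ by some $y \in D$; since every edge of $G$ incident with $x$ stays inside $G_i$, we have $y \in V(G_i)$, hence $y \in D_i$, and again degree preservation gives $\deg_{G_i}(x)\le\deg_{G_i}(y)$. Thus $|D_i|\ge \gst(G_i)$, and summing over $i$ yields $\gst(G) = |D| = |D_1|+|D_2| \ge \gst(G_1)+\gst(G_2)$. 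Combining the two bounds gives the stated equality.

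There is essentially no obstacle here; the only point requiring care --- and the reason the result is genuinely specific to the $r=0$ case --- is the degree-preservation observation. Once $r\ge 1$ vertices are identified, the glued vertices gain degree, so a vertex that was strongly dominated in a component may fail to be strongly dominated in the glued graph (or vice versa), and the clean additivity breaks down. Making the degree argument explicit is what turns the informal phrase \emph{by linearity} into a rigorous proof.
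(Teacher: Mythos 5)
Your proof is correct. The paper offers no written argument for this proposition beyond the phrase ``by linearity,'' and your two-inequality argument via degree preservation in the disjoint union is exactly the standard elaboration of that remark, so you have simply made explicit what the authors left implicit.
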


For $k \ge 2$, given disjoint graphs $G_1,\ldots,G_k$ with $u_i\in V(G_i)$ where $i \in [k]$, the \emph{vertex}-\emph{sum} of $G_1,\ldots,G_k$, at the vertices $u_1,\ldots,u_k$, is the graph obtained from $G_1,\ldots,G_k$ by identifying the vertices $u_i$, $i \in [k]$ from each copy of $G_i$ into one new vertex~$u$, and is denoted by
\[
G_1 \underset{u}{+} G_2 \underset{u}{+} \ldots \underset{u}{+} G_k.
\]

Ghanbari and Alikhani~\cite{Strong-vsum} recently obtained the following result and showed that the bounds are tight.

\begin{theorem}{\rm (\cite{Strong-vsum})}
\label{thm:v-sum}
For the vertex-sum of disjoint graphs $G_1,G_2,\ldots,G_k$ with $u_i\in V(G_i)$ and $i \in [k]$ where $k \ge 2$, we have
\[
1 + \sum_{i=1}^{k} \left( \gst(G_i)-\deg_{G_i}(u_i) \right)
\le  \gst(G_1\underset{u}{+} G_2 \underset{u}{+} \cdots \underset{u}{+} G_k)
\le   \left( \sum_{i=1}^{k}\gst(G_i)\right)+1.
\]
\end{theorem}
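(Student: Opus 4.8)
The plan is to establish the two inequalities separately. Throughout write $u$ for the identified vertex of $G=G_1\underset{u}{+}\cdots\underset{u}{+}G_k$ and record the two degree facts that drive everything: $\deg_G(u)=\sum_{i=1}^k\deg_{G_i}(u_i)$, while every other vertex keeps its degree, so $\deg_G(v)=\deg_{G_i}(v)$ for $v\in V(G_i)\setminus\{u_i\}$. Since the $G_i$ pairwise meet only in $u$, every neighbour of such a $v$ lies in $V(G_i)$, so any vertex of $V(G_i)\setminus\{u_i\}$ that is strongly dominated in $G$ is strongly dominated by a vertex of $V(G_i)$. I assume, as is implicit, that each $u_i$ is non-isolated in $G_i$, i.e. $\deg_{G_i}(u_i)\ge 1$.

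For the upper bound I would take a $\gst$-set $D_i$ of each $G_i$ and form $D=\bigcup_{i=1}^k D_i$, identifying the $u_i$ with $u$. If some $D_i$ contains $u_i$, then $u\in D$ and $D$ is already strongly dominating in $G$: a vertex formerly strongly dominated by $u_i$ in $G_i$ is strongly dominated by $u$ in $G$ because $\deg_G(u)\ge\deg_{G_i}(u_i)$, and every other strong-domination relation survives verbatim since non-$u$ degrees are unchanged; here $|D|\le\sum_i\gst(G_i)$. If no $D_i$ contains $u_i$, then $u\notin D$ and I adjoin it, obtaining the strong dominating set $D\cup\{u\}$ of size $\sum_i\gst(G_i)+1$. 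Either way $\gst(G)\le\sum_{i=1}^k\gst(G_i)+1$.

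For the lower bound I would start from a $\gst$-set $D$ of $G$, write $d_i=|D\cap(V(G_i)\setminus\{u_i\})|$ (so $\sum_i d_i=\gst(G)-\varepsilon$, where $\varepsilon=1$ if $u\in D$ and $\varepsilon=0$ otherwise), and manufacture for each $i$ a strong dominating set $S_i$ of $G_i$ by restricting $D$ and repairing it around $u_i$. Generically I take
\[
S_i=\bigl(D\cap(V(G_i)\setminus\{u_i\})\bigr)\cup N_{G_i}(u_i);
\]
this strongly dominates $G_i$ because every vertex not adjacent to $u_i$ retains its $D$-dominator, which lies in $V(G_i)\setminus\{u_i\}\subseteq S_i$, every neighbour of $u_i$ is placed in $S_i$, and $u_i$ itself is strongly dominated by a neighbour of at least its degree. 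The one exception is when $u_i$ is a strict local degree-maximum, the only case in which no neighbour can strongly dominate $u_i$; there I instead take $S_i=\bigl(D\cap(V(G_i)\setminus\{u_i\})\bigr)\cup\{u_i\}$, which works since $u_i$ then strongly dominates all of its (lower-degree) neighbours. In both cases $|S_i|\le d_i+\deg_{G_i}(u_i)$, the local-maximum case using $1\le\deg_{G_i}(u_i)$.

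Summing yields $\sum_i\gst(G_i)\le\sum_i|S_i|\le\sum_i d_i+\sum_i\deg_{G_i}(u_i)=\gst(G)-\varepsilon+\sum_i\deg_{G_i}(u_i)$. When $u\in D$ this is already the claim $\sum_i\gst(G_i)\le\gst(G)-1+\sum_i\deg_{G_i}(u_i)$, which rearranges to the stated lower bound. When $u\notin D$ I would recover the missing unit as follows: $u$ has a neighbour $y^{*}\in D$ with $\deg_G(y^{*})\ge\deg_G(u)$; this $y^{*}$ lies in exactly one part $G_{i^{*}}$, cannot make $u_{i^{*}}$ a strict local maximum, and already belongs to $D\cap N_{G_{i^{*}}}(u_{i^{*}})$, so in the generic construction of $S_{i^{*}}$ it is counted only once, improving that estimate to $|S_{i^{*}}|\le d_{i^{*}}+\deg_{G_{i^{*}}}(u_{i^{*}})-1$ and shaving the surplus. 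The main obstacle is exactly this bookkeeping near the shared vertex $u$: one must avoid paying for $u$ once per part and must control the parts where $u_i$ is a local degree-maximum, and the device of singling out the maximum-degree dominator $y^{*}$ of $u$ in the case $u\notin D$ is what keeps the total tight.
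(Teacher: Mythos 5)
There is no in-paper proof to compare against: Theorem~\ref{thm:v-sum} is imported from~\cite{Strong-vsum} and stated without proof, so your argument can only be judged on its own terms. On those terms it is correct. The upper bound via $\bigcup_i D_i$, adding $u$ only when no $D_i$ contains its gluing vertex, is sound, and your lower bound handles the two genuinely delicate points properly: each restriction $D\cap(V(G_i)\setminus\{u_i\})$ is repaired by adjoining either $N_{G_i}(u_i)$ or $\{u_i\}$ according to whether $u_i$ is a strict local degree-maximum in $G_i$ (costing at most $\deg_{G_i}(u_i)$ either way, the second case using $\deg_{G_i}(u_i)\ge 1$), and when $u\notin D$ the missing unit is recovered because any strong dominator $y^{*}$ of $u$ satisfies $\deg_{G_{i^{*}}}(y^{*})=\deg_G(y^{*})\ge\deg_G(u)\ge\deg_{G_{i^{*}}}(u_{i^{*}})$, which both forces part $i^{*}$ into the generic case and places $y^{*}$ in the overlap of $D\cap(V(G_{i^{*}})\setminus\{u_{i^{*}}\})$ with $N_{G_{i^{*}}}(u_{i^{*}})$, saving one from that part's count. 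One caveat you should state as a hypothesis rather than an aside: the assumption that each $u_i$ is non-isolated in $G_i$ is not cosmetic but necessary for the statement itself --- for $G_1=K_1$ and $G_2=P_3$ glued at a leaf, the claimed lower bound evaluates to $2$ while $\gst(G_1\underset{u}{+}G_2)=\gst(P_3)=1$ --- so the theorem as transcribed in this paper is only correct under that unstated restriction, and your proof correctly identifies where it is used.
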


Since vertex gluing of $G_1$ and $G_2$ is same as the vertex-sum of these two graphs, by Theorem~\ref{thm:v-sum} we have the following result.

\begin{theorem}{\rm (\cite{Strong-vsum})}
\label{thm:1-gluing}
For the vertex	gluing of graphs $G_1$ and $G_2$ with gluing vertices $u\in V(G_1)$ and $v\in V(G_2)$, we have
\[
\gst(G_1)+\gst(G_2)-\deg_{G_1}(u)-\deg_{G_2}(v) +1 \le  \gst(G_1\cup_{K_1}G_2) \le   \gst(G_1)+\gst(G_2)+1.
\]
\end{theorem}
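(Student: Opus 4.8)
The plan is to read Theorem~\ref{thm:1-gluing} as the special case $k = 2$ of Theorem~\ref{thm:v-sum}, exactly as the sentence preceding the statement suggests. First I would nail down the definitional observation that makes this legitimate: a $1$-clique of a graph is simply a single vertex, so forming the $K_1$-gluing $G_1 \cup_{K_1} G_2$ with chosen vertices $u \in V(G_1)$ and $v \in V(G_2)$ amounts to identifying $u$ with $v$ into one new vertex. This is precisely the vertex-sum $G_1 \underset{u}{+} G_2$ taken with $u_1 = u$ and $u_2 = v$. Hence the two graphs are literally the same graph, not merely isomorphic, and every invariant of one equals the corresponding invariant of the other; in particular their strong domination numbers agree.

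With this identification established, I would invoke Theorem~\ref{thm:v-sum} with $k = 2$, $u_1 = u$, and $u_2 = v$. The sum over $i$ in both the lower and upper bounds then collapses to two terms, giving
\[
1 + \big( \gst(G_1) - \deg_{G_1}(u) \big) + \big( \gst(G_2) - \deg_{G_2}(v) \big)
\le \gst(G_1 \cup_{K_1} G_2)
\le \gst(G_1) + \gst(G_2) + 1.
\]
A routine rearrangement of the left-hand side---collecting the two $\gst$-terms and the two degree terms---rewrites the lower bound as $\gst(G_1) + \gst(G_2) - \deg_{G_1}(u) - \deg_{G_2}(v) + 1$, which is exactly the stated form, while the upper bound is already in the required shape.

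I expect no genuine obstacle, since the whole content of the claim is this substitution together with the remark that vertex gluing and vertex-sum coincide for two graphs. The single point that warrants a moment's care is that the degrees appearing in the bound, $\deg_{G_1}(u)$ and $\deg_{G_2}(v)$, are measured in the original summands $G_1$ and $G_2$ \emph{before} the identification, so that their sum equals the degree of the glued vertex in $G_1 \cup_{K_1} G_2$; this is precisely how Theorem~\ref{thm:v-sum} is phrased, so no separate combinatorial argument is needed. Tightness of both inequalities is likewise inherited directly from the tightness already recorded for Theorem~\ref{thm:v-sum}.
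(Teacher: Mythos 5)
Your proposal is correct and is exactly the paper's own derivation: the paper states Theorem~\ref{thm:1-gluing} as an immediate consequence of Theorem~\ref{thm:v-sum} with $k=2$, after the same observation that vertex gluing of two graphs coincides with their vertex-sum. The specialization and rearrangement you carry out are precisely what the paper leaves implicit, so there is nothing to add.
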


We establish next lower and upper bounds on the edge gluing of two graphs $G_1$ and $G_2$ of orders at least~$3$, that is, we prove bounds on the $K_2$-gluing of two graphs. For $k \ge 1$ an integer, we use the standard notation $[k] = \{1,\ldots,k\}$.  In particular, $[2] = \{1,2\}$. We first establish an upper bound on the edge gluing of two graphs.

\begin{theorem}
\label{thm:2-gluing-upper}
For the edge gluing of connected graphs $G_1$ and $G_2$ both of order at least~$3$,
we have
\[
\gst(G_1\cup_{K_2}G_2) \le   \gst(G_1)+\gst(G_2)+1.
\]
\end{theorem}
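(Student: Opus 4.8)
The plan is to build a strong dominating set of $G = G_1 \cup_{K_2} G_2$ by starting from optimal strong dominating sets of the two pieces and correcting only at the two glued vertices. Write the identified edge as $uv$, so that $V(G_1) \cap V(G_2) = \{u,v\}$ and $uv \in E(G)$ is the shared edge. The crucial observation is that gluing changes degrees only at $u$ and $v$: for every vertex $w \notin \{u,v\}$ we have $\deg_G(w) = \deg_{G_i}(w)$, where $G_i$ is the side containing $w$, whereas $\deg_G(u) = \deg_{G_1}(u) + \deg_{G_2}(u) - 1$ and $\deg_G(v) = \deg_{G_1}(v) + \deg_{G_2}(v) - 1$, the edge $uv$ being counted once. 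Let $D_1$ be a $\gst$-set of $G_1$ and $D_2$ a $\gst$-set of $G_2$, and set $D = D_1 \cup D_2$. Since $D_1 \cap D_2 \subseteq \{u,v\}$, we have $|D| \le \gst(G_1) + \gst(G_2)$, leaving a budget of one extra vertex.

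First I would verify that every vertex outside $\{u,v\}$ is already strongly dominated by $D$ in $G$. Let $x \in V(G_1) \setminus \{u,v\}$ with $x \notin D$; then $x \notin D_1$, so $x$ has a strong dominator $y \in D_1$ in $G_1$, that is $xy \in E(G_1)$ and $\deg_{G_1}(x) \le \deg_{G_1}(y)$. In $G$ the edge $xy$ survives, $y \in D$, and since degrees never decrease under gluing, $\deg_G(y) \ge \deg_{G_1}(y) \ge \deg_{G_1}(x) = \deg_G(x)$; hence $x$ is strongly dominated in $G$. The symmetric argument covers $V(G_2) \setminus \{u,v\}$. Thus only $u$ and $v$ can fail.

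The main obstacle is exactly the pair $u,v$: their degrees strictly increase, so a vertex that dominated $u$ (or $v$) inside $G_i$ need no longer satisfy the degree condition in $G$. I would resolve this with the one spare vertex, splitting into cases according to how many of $u,v$ lie in $D$. If both lie in $D$, nothing is needed. If neither lies in $D$ (so $u,v \notin D_1 \cup D_2$), add to $D$ whichever of $u,v$ has the larger degree in $G$, say $u$; then $u \in D$, and because $uv \in E(G)$ with $\deg_G(v) \le \deg_G(u)$, the vertex $v$ is strongly dominated by $u$. If exactly one of them, say $v$, lies outside $D$, simply add $v$ to $D$. In every case at most one vertex is added, and both $u$ and $v$ become either members of $D$ or strongly dominated.

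Combining the two parts, the resulting set $D$ is a strong dominating set of $G$ with $|D| \le \gst(G_1) + \gst(G_2) + 1$, which gives the claimed bound. The only delicate point is the case where both glued vertices are undominated; the trick that makes it cost a single vertex rather than two is to insert the higher-degree endpoint of the glued edge, so that it simultaneously joins $D$ and strongly dominates the other endpoint across the shared edge. The hypothesis that both graphs have order at least $3$ serves mainly to keep the gluing non-degenerate, so that each side contributes a vertex besides $u$ and $v$, and does not otherwise enter the argument.
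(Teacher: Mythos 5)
Your proof is correct and follows essentially the same route as the paper: take the union of $\gst$-sets of the two pieces, observe that only the two glued endpoints can lose their strong domination (since all other degrees are unchanged and dominator degrees never decrease), and repair this with at most one added vertex, using the higher-degree endpoint of the glued edge to cover the other when both endpoints are uncovered. The only difference is bookkeeping: the paper tracks $u_1,u_2$ and $v_1,v_2$ separately and explicitly swaps them for $u$ and $v$, whereas you work directly with the identified vertices, which is cleaner but not a different argument.
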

\proof
Let $G = G_1 \cup_{K_2} G_2$, and let $uv$ be the edge in $G$ associated with the gluing of the edges  $u_1v_1\in E(G_1)$ and $u_2v_2\in E(G_2)$, where $u$ is the vertex resulting from the gluing of the vertices $u_1$ and $u_2$, and where $v$ is the vertex resulting from the gluing of the vertices $v_1$ and $v_2$. By the edge gluing operation, we note that $\deg_G(u) \ge \max \{ \deg_{G_1}(u_1),\deg_{G_2}(u_2)\}$ and $\deg_G(v) \ge \max \{\deg_{G_1}(v_1),\deg_{G_2}(v_2)\}$. Let $D_1$ and $D_2$ be $\gst$-sets of $G_1$ and $G_2$, respectively. Suppose firstly that $u_1 \in D_1$ and $v_1 \in D_1$. In this case, we let
\[
D=\Big( D_1 \cup D_2 \cup \{u,v\}\Big) \setminus \{u_1,v_1,u_2,v_2\}.
\]

If a vertex is strongly dominated by $u_i$ in $G_i$ for $i \in [2]$, then it is strongly dominated by $u$ in $G$. Analogously, if a vertex is strongly dominated by $v_i$ in $G_i$ for $i \in [2]$, then it is strongly dominated by $v$ in $G$. All other vertices in $\overline{D}$ are strongly dominated as before. Hence, $D$ is a strong dominating set of $G$, and so $\gst(G) \le |D| = (|D_1| - 2) + (|D_2 \setminus \{u_2,v_2\})| + 2 \le (|D_1| - 2 ) + |D_2| + 2 = |D_1| + |D_2| = \gst(G_1) + \gst(G_2)$. Hence we may assume that $u_1 \notin D_1$ or $v_1 \notin D_1$ (or both $u_1 \notin D_1$ and $v_1 \notin D_1$), for otherwise the desired upper bound follows. Renaming vertices if necessary, we may assume that $v_1 \notin D_1$. If $u_1 \in D_1$, then the set
\[
D = \Big(D_1 \cup D_2 \cup \{u,v\}\Big) \setminus\{u_1,u_2,v_2\} \1
\]
is a strong dominating set of $G$, and so $\gst(G) \le |D| = (|D_1| - 1) + |D_2| + 2 = \gst(G_1) + \gst(G_2) + 1$. Hence we may assume that $u_1 \notin D_1$. By our earlier assumption, $v_1 \notin D_1$. Thus, $\{u_1,v_1\} \cap D_1 = \emptyset$. Analogously, if at least one of $u_2$ and $v_2$ belong to the set $D_2$, then we infer that $\gst(G) \le \gst(G_1) + \gst(G_2) + 1$. Hence we may assume that $\{u_2,v_2\} \cap D_2 = \emptyset$. Without loss of generality, we may assume that we have $\deg_G(u) \ge \deg_G(v)$. In this case, the set $D=D_1 \cup D_2\cup \{u\}$ is a strong dominating set of $G$, and so $\gst(G) \le |D| = |D_1| + |D_2| + 1 = \gst(G_1) + \gst(G_2) + 1$.~\QED

\medskip
We next establish a lower bound on the edge gluing of two graphs.

\begin{theorem}
\label{thm:2-gluing-lower}
For the edge gluing of connected graphs $G_1$ and $G_2$ both of order at least~$3$, with gluing edges $u_1v_1\in E(G_1)$ and $u_2v_2\in E(G_2)$, we have
\[
\gst(G_1\cup_{K_2}G_2) \ge \gst(G_1)+\gst(G_2) + 5 - \Psi_{1,2},
\]
where
\begin{equation}
\label{Eq:1}
\Psi_{1,2} = \sum_{i=1}^2 ( \deg_{G_i}(u_i) + \deg_{G_i}(v_i)).
\end{equation}
\end{theorem}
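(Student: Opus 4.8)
The plan is to deduce this lower bound from an \emph{upper} bound on $\gst(G_1)+\gst(G_2)$. Write $G=G_1\cup_{K_2}G_2$ and let $u$ (resp. $v$) be the vertex obtained by identifying $u_1,u_2$ (resp. $v_1,v_2$), exactly as in Theorem~\ref{thm:2-gluing-upper}. Every interior vertex of $G_i$ keeps its degree and neighbourhood, while $\deg_G(u)=\deg_{G_1}(u_1)+\deg_{G_2}(u_2)-1$ and $\deg_G(v)=\deg_{G_1}(v_1)+\deg_{G_2}(v_2)-1$, so that $\Psi_{1,2}=\deg_G(u)+\deg_G(v)+2$. Hence the claim is equivalent to $\gst(G_1)+\gst(G_2)\le \gst(G)+\deg_G(u)+\deg_G(v)-3$, and I would prove this by starting from a $\gst$-set $D$ of $G$ and manufacturing strong dominating sets $D_1$ of $G_1$ and $D_2$ of $G_2$ whose sizes sum to at most $\gst(G)+\deg_G(u)+\deg_G(v)-3$.

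First I set $A_i=D\cap V(G_i)$ and record that $|A_1|+|A_2|=|D|+|D\cap\{u,v\}|$, since $u$ and $v$ are the only vertices shared by $G_1$ and $G_2$. Because each $x\in V(G_i)\setminus\{u_i,v_i\}$ has $N_G(x)=N_{G_i}(x)\subseteq V(G_i)$ and $\deg_G(x)=\deg_{G_i}(x)$, such an $x$ stays strongly dominated in $G_i$ by $A_i$ unless its only strong dominator in $D$ was $u$ or $v$ and the degree inequality breaks once the degree of the gluing vertex drops. Thus the only vertices of $G_i$ that $A_i$ may fail to strongly dominate are $u_i$, $v_i$, and certain neighbours of $u_i$ or $v_i$; I repair this by adjoining a correction set $C_i$ consisting of these \emph{problematic} neighbours together with $u_i$ and/or $v_i$ when needed, and put $D_i=A_i\cup C_i$.

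The whole difficulty lies in bounding $|C_1|+|C_2|$, and here three savings are essential. First, the edge $u_iv_i$ makes $v_i$ a neighbour of $u_i$ that is never problematic, removing one candidate from each side. Second, a neighbour $x$ of $u_i$ that $A_i$ fails to strongly dominate necessarily has $\deg_{G_i}(x)>\deg_{G_i}(u_i)$; hence once $x$ is placed in $C_i$ it already strongly dominates $u_i$, so I never pay simultaneously for $u_i$ and for a problematic neighbour of $u_i$ (and symmetrically for $v_i$). Third, if the $D$-vertex strongly dominating $u$ in $G$ lies in $V(G_i)\setminus\{v\}$, then it already lies in $A_i$ and, having $G_i$-degree at least $\deg_G(u)\ge\deg_{G_i}(u_i)$, it strongly dominates $u_i$ inside $G_i$ at no extra cost. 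Combining these, the naive budget $\deg_{G_i}(u_i)+\deg_{G_i}(v_i)$ for $C_i$ is cut down: the edge removes a unit from each gluing vertex, a problematic neighbour absorbs the cost of the adjacent gluing vertex, and any dominator of $u$ or $v$ that already sits in $A_i$ is not recounted.

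The argument then splits according to $|D\cap\{u,v\}|\in\{0,1,2\}$ and, within each case, according to where the strong dominators of $u$ and of $v$ lie; in each case I would verify $|C_1|+|C_2|\le \deg_G(u)+\deg_G(v)-3-|D\cap\{u,v\}|$, which with $|A_1|+|A_2|=|D|+|D\cap\{u,v\}|$ yields $\gst(G_1)+\gst(G_2)\le|D_1|+|D_2|\le \gst(G)+\Psi_{1,2}-5$. The main obstacle is the tight regime in which one gluing vertex belongs to $D$ and strongly dominates the other: there all three savings must be extracted at once, and one has to invoke the \emph{minimality} of $D$ to argue that a minimum strong dominating set cannot carry many problematic high-degree neighbours of $u_i,v_i$ (otherwise a cheaper dominating choice would exist), so that the correction stays inside the budget. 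This is precisely the case where the constant $5$ is attained, as small examples such as edge-gluing two copies of $P_3$ already show the bound to be tight.
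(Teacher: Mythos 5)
Your overall strategy coincides with the paper's: start from a minimum strong dominating set $D$ of $G=G_1\cup_{K_2}G_2$, manufacture strong dominating sets of $G_1$ and $G_2$ from it, and control the total size by cases on $|D\cap\{u,v\}|$ and on how the gluing vertices compare in degree with their neighbours. But you defer all the work to a single budget inequality, $|C_1|+|C_2|\le \deg_G(u)+\deg_G(v)-3-|D\cap\{u,v\}|$, which your purely additive recipe $D_i=A_i\cup C_i$ cannot meet. Two ingredients of the paper's argument are missing. First, the paper does not take an arbitrary $\gst$-set: it chooses $S$ with $|S\cap\{u,v\}|$ minimum and uses that minimality (Claim~\ref{c:claim1.1}) to force $\deg_G(u),\deg_G(v)\ge 3$ in the case $u,v\in S$; without this choice your budget is already negative for $D=\{u,v\}$ in $P_3\cup_{K_2}P_3=P_4$, where $|A_1|+|A_2|=|D|+2=4$ exceeds the target $|D|+\Psi_{1,2}-5=3$ even with $C_1=C_2=\emptyset$. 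Second, the paper's constructed sets are not obtained from $D\cap V(G_i)$ by additions alone: when a gluing vertex is ``weak'' the paper \emph{deletes} it and adds $X_i$ or $Y_i$ instead (Claims~\ref{c:claim1.4}--\ref{c:claim1.6}). In your framework $u_i,v_i\in A_i$ whenever $u,v\in D$, and if all of $X_i\cup Y_i$ is problematic you pay $\sum_i(|X_i|+|Y_i|)=\deg_G(u)+\deg_G(v)-2$, overshooting your budget by three while none of your three savings applies.

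The deferred case $|D\cap\{u,v\}|=0$ hides a genuine obstruction, not a bookkeeping one. If $\deg_G(u)=\deg_G(v)=2$ and the strong dominator of $u$ lies in $G_1$ while that of $v$ lies in $G_2$, your saving (c) rescues only $u_1$ and $v_2$; covering $u_2$ and $v_1$ costs at least one vertex on each side, so $|C_1|+|C_2|\ge 2$ against a budget of $1$. This cannot be repaired: take $G_1$ to be the path $w\,x\,u_1\,v_1$ and $G_2$ the path $u_2\,v_2\,y\,z$, so that $G_1\cup_{K_2}G_2=P_6$. Then $\gst(G_1)=\gst(G_2)=2$ and $\Psi_{1,2}=6$, so the asserted lower bound is $3$, yet $\{x,y\}$ is a strong dominating set of $P_6$ and $\gst(P_6)=2$. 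The inequality you are trying to prove therefore fails in exactly the regime you flag as delicate, and the same configuration breaks the paper's own Claim~\ref{c:claim3}, where $S\cap V_1$ is asserted to strongly dominate $G_1$ although $v_1$'s unique neighbour $u_1$ need not be covered by it. Any correct treatment has to exclude or separately handle this case rather than absorb it into a uniform budget.
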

\proof
Let $G = G_1 \cup_{K_2} G_2$, and let $uv$ be the edge in $G$ associated with the gluing of the edges  $u_1v_1\in E(G_1)$ and $u_2v_2\in E(G_2)$, where $u$ is the vertex resulting from the gluing of the vertices $u_1$ and $u_2$, and where $v$ is the vertex resulting from the gluing of the vertices $v_1$ and $v_2$. Let $\Phi_{1,2}$ denote the desired lower bound, that is,
\begin{equation}
\label{Eq:2}
\Phi_{1,2} = \gst(G_1) + \gst(G_2) + 5 - \Psi_{1,2}.
\end{equation}

Hence we wish to show that $\gst(G) \ge \Phi_{1,2}$. Let $X_i = N_{G_i}(u_i) \setminus\{v_i\}$ and let $Y_i = N_{G_i}(v_i) \setminus\{u_i\}$ for $i \in [2]$. We note that $\deg_{G_i}(u_i) = |X_i| + 1$ and $\deg_{G_i}(v_i) = |Y_i| + 1$ for $i \in [2]$, and so Equation~(\ref{Eq:1}) can be written as follows.
\begin{equation}
\label{Eq:3}
\Psi_{1,2} = 4 + \sum_{i=1}^2 |X_i| + \sum_{i=1}^2 |Y_i|.
\end{equation}

Since $G_1$ and $G_2$ are connected graphs of orders at least~$3$, we note that
\begin{equation}
\label{Eq:3b}
|X_1| + |Y_1| \ge 1 \hspace*{0.5cm} \mbox{and} \hspace*{0.5cm} |X_2| + |Y_2| \ge 1,
\end{equation}
and so, by Equation~(\ref{Eq:3}), we have $\Psi_{1,2} \ge 4 + 2 = 6$. In particular, this implies by Equation~(\ref{Eq:2}) that
\begin{equation}
\label{Eq:3c}
\Phi_{1,2} \le \gst(G_1) + \gst(G_2) - 1.
\end{equation}

Among all $\gst$-set of $G$, let $S$ be chosen so that $|S \cap \{u,v\}|$ is a minimum. We construct strong dominating sets for $G_1$ and $G_2$ based on $S$. We note that the vertices $u_i$ and $v_i$ do not belong to the graph $G$ (as these vertices are renamed $u$ and $v$, respectively, in $G$), and therefore $u_i \notin S$ and $v_i \notin S$ for $i \in [2]$. Let
\[
V_i = V(G_i) \setminus \{u_i,v_i\} \1
\]
for $i \in [2]$. Thus, $V(G) = V_1 \cup V_2 \cup \{u,v\}$. For $i \in [2]$, we define the vertex $u_i$ as \emph{strong} if $X_i = \emptyset$ or if $|X_i| \ge 1$ and $\deg_{G_i}(u_i) \ge \max \{ \deg_{G_i}(x) \colon x \in X_i\}$, and we define $u_i$ as \emph{weak} otherwise, that is, $u_i$ is weak if $|X_i| \ge 1$ and $\deg_{G_i}(u_i) < \max \{ \deg_{G_i}(x) \colon x \in X_i\}$. Analogously for $i \in [2]$, we define the vertex $v_i$ as \emph{strong} if $Y_i = \emptyset$ or if $|Y_i| \ge 1$ and $\deg_{G_i}(v_i) \ge \max \{ \deg_{G_i}(y) \colon y \in X_i\}$, and we define $v_i$ as \emph{weak} otherwise. Let $A = \{u_1,v_1,u_2,v_2\}$.

We proceed further with four subclaims.

\begin{claim}
\label{c:claim1}
If $u\in S$ and $v\in S$, then $\gst(G) \ge \Phi_{1,2}$.
\end{claim}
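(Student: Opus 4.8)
The plan is to convert the $\gst$-set $S$ of $G$ (which by hypothesis contains both $u$ and $v$) into strong dominating sets $D_1$ and $D_2$ of $G_1$ and $G_2$, and then to read off the inequality from $\gst(G_i) \le |D_i|$ together with the identity $|S| = |S \cap V_1| + |S \cap V_2| + 2$. I would build each $D_i$ by starting from $S \cap V_i$ and then, for each gluing endpoint, adding vertices according to the strong/weak classification already introduced: if $u_i$ is strong I add the single vertex $u_i$, whereas if $u_i$ is weak I instead add its whole private neighbourhood $X_i$, and symmetrically for $v_i$ using $Y_i$. The guiding idea is that a strong endpoint can, by the definition of strong, dominate all of its private neighbours in $G_i$ by itself, so a single vertex suffices there, while a weak endpoint cannot, so I pay for its entire neighbourhood instead (which simultaneously dominates the endpoint through its weak-witness).

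Next I would verify that $D_i$ is a strong dominating set of $G_i$. Every vertex of $V_i \setminus S$ is strongly dominated in $G$ either by a vertex of $V_i \cap S$ — in which case the same vertex works in $G_i$, since vertices of $V_i$ retain their degree under the gluing — or by $u$ (resp.\ $v$), in which case it lies in $X_i$ (resp.\ $Y_i$). If the relevant endpoint is weak, that vertex already belongs to $D_i$; if it is strong, the defining inequality $\deg_{G_i}(u_i) \ge \max\{\deg_{G_i}(x)\colon x \in X_i\}$ guarantees that $u_i \in D_i$ strongly dominates it in $G_i$. Finally, an endpoint that is weak and hence omitted from $D_i$ is itself strongly dominated by its weak-witness, a neighbour of strictly larger degree that was added. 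This is precisely the step for which the strong/weak bookkeeping was designed.

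For the count, I would compare $|D_i|$ with the per-graph budget $\deg_{G_i}(u_i) + \deg_{G_i}(v_i) = |X_i| + |Y_i| + 2$. Checking the four strong/weak combinations shows that the number of vertices added beyond $S \cap V_i$ is at most this budget minus a saving $c_i$, with $c_i \ge 1$ in every case (for instance $c_i = |X_i| + |Y_i| \ge 1$ when both endpoints are strong, by Equation~(\ref{Eq:3b}), and $c_i = 2$ when both are weak). Consequently $\gst(G_1) + \gst(G_2) \le |D_1| + |D_2| \le |S| - 2 + \Psi_{1,2} - (c_1 + c_2)$, so the desired bound $\gst(G) \ge \Phi_{1,2}$ follows as soon as $c_1 + c_2 \ge 3$. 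A saving of exactly $c_i = 1$ forces one endpoint of $G_i$ to be a leaf; after the obvious refinement of dropping a leaf endpoint whenever its partner already lies in $D_i$, the only surviving $c_i = 1$ configurations are ``$u_i$ a leaf with $v_i$ weak'' and ``$v_i$ a leaf with $u_i$ weak.''

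The main obstacle is to close this last unit, that is, to exclude $c_1 = c_2 = 1$. When the two leaf endpoints lie on the same side ($u_1,u_2$ or $v_1,v_2$), then $u$ (resp.\ $v$) becomes a leaf of $G$ whose unique neighbour is $v$ (resp.\ $u$), which lies in $S$; deleting it from $S$ produces a smaller strong dominating set, contradicting that $S$ is a $\gst$-set. The genuinely delicate case is the cross configuration, say $u_1$ and $v_2$ leaves with $v_1$ and $u_2$ weak: now neither $u$ nor $v$ is a leaf of $G$, yet each has a neighbour of strictly larger degree, namely the weak-witnesses $y^* \in Y_1$ of $v$ and $x^* \in X_2$ of $u$. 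Here I would invoke the minimality built into the choice of $S$ — that $|S \cap \{u,v\}|$ is least, so under the hypothesis of this claim every $\gst$-set contains both $u$ and $v$ — and derive a contradiction by exhibiting a strong dominating set of size at most $|S|$ that avoids $u$ (or $v$): swap the weak-witness in for the endpoint and re-assign the few private low-degree dependents that the leaf structure leaves behind. Carrying out this exchange in full generality, and in particular controlling exactly which low-degree private neighbours of $u$ and $v$ must be re-dominated, is where I expect the real work to lie.
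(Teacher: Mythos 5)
Your construction is the same as the paper's (for each $i$, augment $S \cap V_i$ by the strong endpoints and by $X_i$ or $Y_i$ for the weak ones, then count against $\Psi_{1,2}$), and your bookkeeping correctly reduces everything to the single residual configuration $c_1=c_2=1$ in its cross form: $X_1 = Y_2 = \emptyset$ with $v_1$ and $u_2$ weak (the same-side leaf configurations are indeed killed by the minimality of $|S\cap\{u,v\}|$, exactly as in Claim~\ref{c:claim1.1}(a)). But that cross case is precisely where your proof stops: you announce an exchange argument contradicting the assumption that every $\gst$-set contains $u$ and $v$, and defer it as ``where the real work lies.'' That is a genuine gap, and the route you sketch is doubtful. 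The weak-witness $x^* \in X_2$ of $u_2$ has larger degree than $u$, but it need not be adjacent to the other vertices of $X_2$ that $u$ is strongly dominating, so ``re-assigning the dependents'' can force additional vertices into the set and the swap need not stay within $|S|$. Indeed, nothing rules out that in this configuration every $\gst$-set of $G$ really does contain both $u$ and $v$; no contradiction is available in general.

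The paper closes this case without any contradiction, by a direct observation you have all the ingredients for: since $X_1 = Y_2 = \emptyset$, we have $\deg_G(v) = |Y_1|+1 = \deg_{G_1}(v_1)$ and $\deg_G(u) = |X_2|+1 = \deg_{G_2}(u_2)$, so $S_1 = (S\cap V_1)\cup\{v_1\}$ and $S_2 = (S\cap V_2)\cup\{u_2\}$ are strong dominating sets of $G_1$ and $G_2$ (the weak endpoints inherit, vertex for vertex, exactly the strong-domination duties of $u$ and $v$, and the leaf endpoints $u_1$, $v_2$ are handled by their unique neighbours). This gives $\gst(G_1)+\gst(G_2) \le |S_1|+|S_2| = |S| = \gst(G)$, and Inequality~(\ref{Eq:3c}), i.e.\ $\Phi_{1,2} \le \gst(G_1)+\gst(G_2)-1$, finishes. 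You should replace your deferred exchange argument by this degree-coincidence observation; with that substitution your proof matches the paper's.
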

\proof Suppose that $u \in S$ and $v\in S$. By our choice of the set $S$, this implies that every $\gst$-set of $G$ contains both $u$ and $v$.

\begin{subclaim}
\label{c:claim1.1}
The following properties hold. \\[-24pt]
\begin{enumerate}
\item[{\rm (a)}] $|X_1| + |X_2| \ge 2$ and $|Y_1| + |Y_2| \ge 2$.
\item[{\rm (b)}] $\deg_G(u) \ge 3$ and $\deg_G(v) \ge 3$.
\item[{\rm (c)}] $\Psi_{1,2} \ge 8$.
\end{enumerate}
\end{subclaim}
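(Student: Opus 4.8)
The plan is to reduce everything to part~(a), and then prove~(a) by a local exchange argument. First I would record the degree identities $\deg_G(u) = 1 + |X_1| + |X_2|$ and $\deg_G(v) = 1 + |Y_1| + |Y_2|$, which hold because the gluing merges only $u_1,u_2$ into $u$ and $v_1,v_2$ into $v$, so the neighbours of $u$ in $G$ are precisely $v$ together with the vertices of $X_1 \cup X_2$. These identities show that the two inequalities in~(a) are equivalent to the two inequalities in~(b), and that part~(c) is immediate from~(a): by Equation~(\ref{Eq:3}) we have $\Psi_{1,2} = 4 + (|X_1|+|X_2|) + (|Y_1|+|Y_2|) \ge 4+2+2 = 8$. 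So the whole subclaim rests on proving $\deg_G(u) \ge 3$; the bound $\deg_G(v) \ge 3$ then follows by the symmetry that interchanges $u$ with $v$ and the sets $X_i$ with $Y_i$.

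To prove $\deg_G(u) \ge 3$ I would argue by contradiction, assuming $\deg_G(u) \le 2$, and exploit the hypothesis (forced by the minimality in the choice of $S$) that \emph{every} $\gst$-set of $G$ contains both $u$ and $v$. The strategy is to exhibit a strong dominating set of $G$ of cardinality at most $\gst(G)$ that avoids $u$, which yields the desired contradiction. If $\deg_G(u)=1$, then $N_G(u)=\{v\}$ with $v \in S$, and $S \setminus \{u\}$ already works: the vertex $u$ is strongly dominated by $v$ (as $\deg_G(u)=1\le \deg_G(v)$), and $u$ was the dominator of no vertex of $V \setminus S$ (its only neighbour $v$ lies in $S$), so this set is a strong dominating set of strictly smaller size.

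The substantive case is $\deg_G(u) = 2$, say $N_G(u) = \{v, w\}$ with $w \in X_1 \cup X_2$. Here I would take $S^\ast = (S \setminus \{u\}) \cup \{w\}$, which has size at most $|S|$ and omits $u$. Adding $w$ can only help, and deleting $u$ can only endanger the domination of $u$'s neighbours $v$ and $w$; but both now lie in $S^\ast$, so no other vertex is affected, and the only point left to check is that $u$ itself is strongly dominated from within $S^\ast$. This reduces to showing $\max\{\deg_G(v),\deg_G(w)\} \ge 2$. The crux --- and the main obstacle --- is to rule out $\deg_G(v)=1$: if $\deg_G(v)=1$ then $Y_1=Y_2=\emptyset$, while $\deg_G(u)=2$ forces one of $X_1,X_2$ to be empty, say $X_j=\emptyset$; but then $G_j$ has $X_j=Y_j=\emptyset$, so the component of $G_j$ containing the edge $u_jv_j$ is just that single edge, contradicting that $G_j$ is connected of order at least~$3$. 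Hence $\deg_G(v)\ge 2$, so $v$ strongly dominates $u$, and $S^\ast$ is a strong dominating set. When $w \notin S$ this is a $\gst$-set missing $u$, and when $w \in S$ it has size $\gst(G)-1$; either way we contradict the choice of $S$. This completes the proof of~(a), hence of~(b) and~(c).
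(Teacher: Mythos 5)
Your proposal is correct and follows essentially the same route as the paper: both arguments contradict the fact (forced by the minimality of $|S \cap \{u,v\}|$) that every $\gst$-set contains $u$ and $v$, by exhibiting the set $S \setminus \{u\}$ when $\deg_G(u) = 1$ and the set $(S \setminus \{u\}) \cup \{w\}$ when $\deg_G(u) = 2$, and then deduce (b) and (c) from the degree identities. The only difference is cosmetic: you phrase the case split in terms of $\deg_G(u)$ rather than $|X_1|+|X_2|$, and you spell out the verification that $\deg_G(v) \ge 2$ (via connectivity and order at least~$3$ of the $G_i$), which the paper merely asserts.
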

\proof (a) Suppose that $|X_1| + |X_2| \le 1$. If $X_1 = X_2 = \emptyset$, then $\deg_{G_i}(u_i) = 1$ and $v_i$ is the unique neighbor of $u_i$ in $G_i$ for $i \in [2]$. Thus, $\deg_G(u) = 1$, and so the vertex~$v$ is the unique neighbor of the vertex~$u$ in the graph $G$, implying that the set $S \setminus \{u\}$ is a strong dominating set of $G$, contradicting the minimality of the set $S$. Hence, $|X_1| + |X_2| = 1$. Let $X_1 \cup X_2 = \{x\}$, and so $N_G(u) = \{v,x\}$. In this case, the set $(S \setminus \{u\}) \cup \{x\}$ is a strong dominating set of $G$ noting that the vertex $v$ has degree at least~$2$ in $G$,  contradicting our earlier observation that every $\gst$-set of $G$ contains both $u$ and~$v$. Hence, $|X_1| + |X_2| \ge 2$. Analogously, $|Y_1| + |Y_2| \ge 2$.

(b) Since $\deg_G(u) = |X_1| + |X_2| + 1$ and $\deg_G(v) = |Y_1| + |Y_2| + 1$, by part~(a) we infer that $\deg_G(u) \ge 3$ and $\deg_G(v) \ge 3$.

(c) This follows from Equation~(\ref{Eq:3}) and Part~(a).~\smallqed

\begin{subclaim}
\label{c:claim1.2}
If all four vertices in $A$ are strong, then $\gst(G) \ge \Phi_{1,2}$.
\end{subclaim}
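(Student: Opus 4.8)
The plan is to leverage the minimum $\gst$-set $S$ of $G$ (which, in this subcase of Claim~\ref{c:claim1}, satisfies $u\in S$ and $v\in S$) to manufacture strong dominating sets of $G_1$ and $G_2$, and then read off the lower bound on $\gst(G)$ by comparing cardinalities. Concretely, I would set
\[
D_1 = (S \cap V_1) \cup \{u_1,v_1\} \qquad \text{and} \qquad D_2 = (S \cap V_2) \cup \{u_2,v_2\},
\]
and argue that each $D_i$ is a strong dominating set of $G_i$. Since $\gst(G_i) \le |D_i| = |S \cap V_i| + 2$, this is enough to convert $|S|$ into the bound we want.

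The crux is verifying that $D_1$ strongly dominates $G_1$ (the argument for $D_2$ is symmetric). The key preliminary observation is that every vertex $w \in V_1$ has $\deg_{G_1}(w) = \deg_G(w)$, since the gluing only merges $u_1$ with $u_2$ and $v_1$ with $v_2$, leaving the degrees of vertices in $V_1$ untouched. Given this, I would take an arbitrary $w \in V_1 \setminus S = V_1 \setminus D_1$ and a vertex $z \in S$ that strongly dominates $w$ in $G$; as $w$'s neighbours lie in $V_1 \cup \{u,v\}$, there are three cases. If $z \in V_1$, then $z \in D_1$ and the degree equality transfers the domination directly into $G_1$. If $z = u$, then $w \in X_1$, and because $u_1$ is \emph{strong} we have $\deg_{G_1}(u_1) \ge \max\{\deg_{G_1}(x) \colon x \in X_1\} \ge \deg_{G_1}(w)$, so $u_1 \in D_1$ strongly dominates $w$ in $G_1$; the case $z = v$ is handled identically using the strongness of $v_1$ and $w \in Y_1$. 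This is exactly the step where the hypothesis that all four vertices of $A$ are strong is indispensable: it is what lets a domination by $u$ or $v$ in $G$ be inherited by $u_1$ or $v_1$ in $G_1$.

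With both $D_i$ confirmed, the counting is immediate. We obtain $\gst(G_1) + \gst(G_2) \le |S \cap V_1| + |S \cap V_2| + 4$, and since $u,v \in S$ we have $|S \cap V_1| + |S \cap V_2| = |S| - 2 = \gst(G) - 2$, whence
\[
\gst(G) \ge \gst(G_1) + \gst(G_2) - 2.
\]
Finally I would invoke Subclaim~\ref{c:claim1.1}(c), namely $\Psi_{1,2} \ge 8$, which forces $\Phi_{1,2} = \gst(G_1) + \gst(G_2) + 5 - \Psi_{1,2} \le \gst(G_1) + \gst(G_2) - 3 \le \gst(G)$, giving the desired inequality $\gst(G) \ge \Phi_{1,2}$.

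The main obstacle I anticipate is the degree bookkeeping in the three-case analysis: one must be careful that degrees are genuinely preserved for vertices of $V_i$ under the gluing, and that the defined notion of a \emph{strong} vertex is precisely the condition needed to push a $u$- or $v$-domination in $G$ down to a $u_i$- or $v_i$-domination in $G_i$. Once that transfer is justified, everything else is routine cardinality counting together with the already-established bound $\Psi_{1,2}\ge 8$.
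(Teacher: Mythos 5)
Your proposal is correct and follows essentially the same route as the paper: it builds the same sets $(S \cap V_i) \cup \{u_i,v_i\}$, performs the same cardinality count $|S_1|+|S_2| = |S|+2$, and closes with the same appeal to $\Psi_{1,2} \ge 8$. The only difference is that you spell out the three-case verification that these sets are strong dominating sets of the $G_i$ (using degree preservation on $V_i$ and the strongness of $u_i,v_i$), a step the paper asserts without detail.
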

\proof Suppose that all four vertices in $A$ are strong. In this case, the set
\[
S_i = (S \cap V_i) \cup \{u_i,v_i\} \1
\]
is a strong dominating set of $G_i$ for $i \in [2]$, and so $\gst(G_i)  \le |S_i|$ for $i \in [2]$. Thus by Equation~(\ref{Eq:2}) we have $\Phi_{1,2} + \Psi_{1,2} - 5 = \gst(G_1) + \gst(G_2)  \le |S_1| + |S_2| = |S| + 4 - 2 = \gst(G) + 2$, or, equivalently, $\gst(G) \ge \Phi_{1,2} + \Psi_{1,2} - 7$. By Claim~\ref{c:claim1.1}(c), $\Psi_{1,2} \ge 8$, implying that $\gst(G) \ge \Phi_{1,2} + 1 > \Phi_{1,2}$.~\smallqed

\begin{subclaim}
\label{c:claim1.3}
If exactly three vertices in $A$ are strong, then $\gst(G) \ge \Phi_{1,2}$.
\end{subclaim}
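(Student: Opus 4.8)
The plan is to mimic the construction in Claim~\ref{c:claim1.2}, but to compensate for the single weak vertex by enlarging the strong dominating set built on its side by a controlled number of vertices, and then to show that this enlargement is always paid for by a correspondingly larger value of $\Psi_{1,2}$. By the symmetry that interchanges the roles of $G_1$ and $G_2$, together with the symmetry that interchanges the roles of $u$ and $v$ (equivalently, of $X_i$ and $Y_i$), I may assume without loss of generality that the unique weak vertex of $A$ is $u_1$, so that $v_1$, $u_2$ and $v_2$ are all strong.

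On the $G_2$ side, since both $u_2$ and $v_2$ are strong, the set $S_2 = (S \cap V_2) \cup \{u_2,v_2\}$ is a strong dominating set of $G_2$ by the same argument as in Claim~\ref{c:claim1.2}, so $\gst(G_2) \le |S \cap V_2| + 2$. The work is therefore concentrated on $G_1$. First I would record the two facts that make the translation from $G$ to $G_1$ work: a non-gluing vertex $w \in V_1$ satisfies $\deg_G(w) = \deg_{G_1}(w)$, and $\deg_G(u) = \deg_{G_1}(u_1) + |X_2|$. Then I would determine precisely which vertices of $G_1$ fail to be strongly dominated by the candidate set $(S \cap V_1) \cup \{u_1,v_1\}$. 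A short case analysis on the vertex $z \in S$ that strongly dominates a given $w \in V_1 \setminus S$ in $G$ shows that $w$ is already strongly dominated in $G_1$ unless $z = u$; using that $v_1$ is strong one checks that no vertex of $Y_1$ can be problematic, so the set $P$ of \emph{problematic} vertices is a subset of $X_1$ consisting of vertices $w$ with $\deg_{G_1}(w) > \deg_{G_1}(u_1)$ whose only strong dominator in $G$ is $u$. Adjoining $P$ yields a strong dominating set $D_1 = (S \cap V_1) \cup \{u_1,v_1\} \cup P$ of $G_1$, and hence $\gst(G_1) \le |S \cap V_1| + 2 + |P|$.

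The crucial point, and the step I expect to be the main obstacle, is to show that the extra $|P|$ vertices are always absorbed by $\Psi_{1,2}$, that is, to prove $\Psi_{1,2} \ge 7 + |P|$. The key observation is that each problematic $w$ forces $|X_2| \ge 1$: since $w$ is strongly dominated by $u$ in $G$ we have $\deg_{G_1}(w) = \deg_G(w) \le \deg_G(u) = \deg_{G_1}(u_1) + |X_2|$, while $\deg_{G_1}(w) > \deg_{G_1}(u_1)$, so $|X_2| \ge 1$. Combining $|X_1| \ge |P|$ (as $P \subseteq X_1$), $|X_2| \ge 1$, and $|Y_1| + |Y_2| \ge 2$ from Claim~\ref{c:claim1.1}(a) with Equation~(\ref{Eq:3}) gives $\Psi_{1,2} = 4 + |X_1| + |X_2| + |Y_1| + |Y_2| \ge 7 + |P|$ whenever $P \neq \emptyset$; and if $P = \emptyset$ the bound $\Psi_{1,2} \ge 8$ from Claim~\ref{c:claim1.1}(c) already suffices.

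Finally I would assemble the estimates. Writing $\gst(G) = |S| = |S \cap V_1| + |S \cap V_2| + 2$ (as $u,v \in S$), the two constructions give $\gst(G_1) + \gst(G_2) \le |S| + 2 + |P|$, so by Equation~(\ref{Eq:2}),
\[
\Phi_{1,2} = \gst(G_1) + \gst(G_2) + 5 - \Psi_{1,2} \le |S| + 7 + |P| - \Psi_{1,2} \le |S| = \gst(G),
\]
using $\Psi_{1,2} \ge 7 + |P|$. This is exactly the desired inequality $\gst(G) \ge \Phi_{1,2}$. The only genuinely delicate points are the bookkeeping that isolates $P$ inside $X_1$ (ensuring that no vertex of $Y_1$, and no vertex already dominated from within $V_1$, is mistakenly counted) and the degree inequality forcing $|X_2| \ge 1$ from the mere existence of a problematic vertex; everything else is routine counting.
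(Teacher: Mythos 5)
Your proof is correct, and it follows the same overall framework as the paper (lift $S$ to strong dominating sets of $G_1$ and $G_2$, then charge the augmentation to $\Psi_{1,2}$ via Equation~(\ref{Eq:3})), but the construction on the weak side and the resulting bookkeeping are genuinely different. The paper takes $S_1 = (S\cap V_1)\cup\{v_1\}\cup X_1$ — it adds \emph{all} of $X_1$ and omits $u_1$, which is then strongly dominated inside $G_1$ by whichever $x\in X_1$ witnesses the weakness of $u_1$ — and $S_2=(S\cap V_2)\cup\{u_2,v_2\}$; this gives $\gst(G_1)+\gst(G_2)\le |S|+1+|X_1|$, and since $|X_1|=\Psi_{1,2}-4-|X_2|-|Y_1|-|Y_2|\le \Psi_{1,2}-6$ by Claim~\ref{c:claim1.1}(a), the bound $\gst(G)\ge\Phi_{1,2}$ drops out with no further analysis. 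You instead add $\{u_1,v_1\}$ together with only the refined set $P\subseteq X_1$ of genuinely problematic neighbours, which costs $|S|+2+|P|$ and therefore forces you to prove the extra inequality $\Psi_{1,2}\ge 7+|P|$; your observation that a nonempty $P$ forces $|X_2|\ge 1$ (via $\deg_{G_1}(w)\le\deg_G(u)=\deg_{G_1}(u_1)+|X_2|$ and $\deg_{G_1}(w)>\deg_{G_1}(u_1)$) is correct and is exactly the lemma needed to close that gap, with Claim~\ref{c:claim1.1}(c) covering the case $P=\emptyset$. Your verification that $S_1$ is a strong dominating set of $G_1$ (the case analysis on the dominator $z\in S$ of $w\in V_1\setminus S$, and the fact that $v_1$ being strong absorbs any $w\in Y_1$) is also sound. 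In short: the paper buys simplicity by over-adding the whole of $X_1$ and trading it directly against $\Psi_{1,2}$, whereas your sharper augmenting set requires the additional degree argument but yields the same conclusion; both are valid proofs of Claim~\ref{c:claim1.3}.
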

\proof Suppose that exactly three vertices in $A$ are strong. Renaming vertices if necessary, we may assume that $u_1$ is weak, and so the vertices $v_1,u_2,v_2$ are strong. We now consider the sets
\[
S_1 = (S \cap V_1) \cup (\{v_1\} \cup X_1) \hspace*{0.5cm} \mbox{and} \hspace*{0.5cm} S_2 = (S \cap V_2) \cup \{u_2,v_2\}.
\]

The set $S_i$ is a strong dominating set of $G_i$ for $i \in [2]$. By Claim~\ref{c:claim1.1}(a), $|X_2| + |Y_2| \ge 2$. Thus since $|Y_1| \ge 0$, we infer from Equation~(\ref{Eq:3}) that
\[
\begin{array}{lcl}
\gst(G_1) + \gst(G_2) & \le & |S_1| + |S_2| \1 \\
& \le & \displaystyle{ |S| + 1 + |X_1|  } \2 \\
& = & \displaystyle{ |S| + 1 + \Psi_{1,2} - 4 - |X_2| - |Y_1| - |Y_2| } \2 \\
& \le & \displaystyle{ |S| + 1 + \Psi_{1,2} - 4 - 2 } \2 \\
& \le & \displaystyle{ |S| - 5 + \Psi_{1,2}} \2 \\
\end{array}
\]
yielding the desired result.~\smallqed

\begin{subclaim}
\label{c:claim1.4}
If exactly two vertices in $A$ are strong, then $\gst(G) \ge \Phi_{1,2}$.
\end{subclaim}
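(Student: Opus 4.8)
The plan is to split on the positions of the two weak vertices of $A=\{u_1,v_1,u_2,v_2\}$. Up to interchanging $G_1\leftrightarrow G_2$ and $u\leftrightarrow v$, exactly two weak vertices fall into one of three configurations: (i) both weak vertices lie in the same graph, say $u_1$ and $v_1$; (ii) they are \emph{parallel}, say $u_1$ and $u_2$ (so $v_1,v_2$ are strong); and (iii) they are \emph{crossed}, say $u_1$ and $v_2$ (so $v_1,u_2$ are strong). In every case I would reuse the decomposition of Claims~\ref{c:claim1.2} and~\ref{c:claim1.3}: from the fixed $\gst$-set $S$ (with $u,v\in S$) I construct strong dominating sets $S_1,S_2$ of $G_1,G_2$ by keeping $S\cap V_i$, replacing each strong endpoint by itself, and replacing each weak endpoint by the neighbour set ($X_i$ or $Y_i$) that it fails to strongly dominate; then I bound $|S_1|+|S_2|$ above by $|S|$ plus a small defect and translate this into a lower bound on $\gst(G)$ via $\gst(G_i)\le|S_i|$.

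For configuration (i) I would take $S_1=(S\cap V_1)\cup X_1\cup Y_1$ and $S_2=(S\cap V_2)\cup\{u_2,v_2\}$, the weak vertices $u_1,v_1$ being strongly dominated inside $S_1$ by the high-degree neighbours guaranteed by their weakness; the count gives $\gst(G)\ge\Phi_{1,2}+|X_2|+|Y_2|-1\ge\Phi_{1,2}$ by~(\ref{Eq:3b}). For configuration (ii) I would apply the construction of Claim~\ref{c:claim1.3} on both sides, $S_i=(S\cap V_i)\cup\{v_i\}\cup X_i$ for $i\in[2]$, obtaining $\gst(G)\ge\Phi_{1,2}+|Y_1|+|Y_2|-1\ge\Phi_{1,2}+1$ by Claim~\ref{c:claim1.1}(a). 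Thus both non-crossed configurations are routine degree-accounting.

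The crossed configuration (iii) is the crux and the main obstacle. With the analogous sets $S_1=(S\cap V_1)\cup\{v_1\}\cup X_1$ and $S_2=(S\cap V_2)\cup\{u_2\}\cup Y_2$, the same computation only yields $\gst(G)\ge\Phi_{1,2}+|X_2|+|Y_1|-1$. This settles the sub-case $|X_2|+|Y_1|\ge1$, but it is off by one precisely when $X_2=Y_1=\emptyset$, in which case $v_1$ and $u_2$ are \emph{pendant} vertices attached to $u_1$ and $v_2$, and the naive bound genuinely fails. To recover the missing unit I would invoke the standing hypothesis of Claim~\ref{c:claim1} that $u$ and $v$ lie in \emph{every} $\gst$-set of $G$. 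Since $u$ is forced and $S$ is minimum, $S\setminus\{u\}$ is not a strong dominating set, and a short exchange argument shows $u$ must have a private neighbour of low degree: if $u$ had no private neighbour, then either $S\setminus\{u\}$ already strongly dominates $G$ (when $u$ is strongly dominated by some other vertex of $S$), or replacing $u$ by a strictly higher-degree neighbour $x^*\in X_1$ (one exists because $u_1$ is weak) again yields a strong dominating set of size $|S|$ avoiding $u$ — each case contradicting either the minimality of $|S|$ or the fact that $u$ lies in every $\gst$-set. Hence $u$ has a private neighbour $w$, necessarily in $X_1$ (as $v\in S$) with $\deg_G(w)\le\deg_G(u)$, so $X_1$ contains a neighbour of $u_1$ of degree at most $\deg_{G_1}(u_1)$; symmetrically $Y_2$ contains such a neighbour of $v_2$.

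Armed with this, I would refine the two sets to $S_1=(S\cap V_1)\cup\{u_1\}\cup X_1'$ and $S_2=(S\cap V_2)\cup\{v_2\}\cup Y_2'$, where $X_1'=\{x\in X_1:\deg_{G_1}(x)>\deg_{G_1}(u_1)\}$ and $Y_2'$ is defined symmetrically. These are strong dominating sets because $u_1$ now strongly dominates both its low-degree neighbours and the pendant $v_1$ (and likewise $v_2$ dominates its low-degree neighbours and the pendant $u_2$), while the genuinely higher-degree neighbours sit in the set. The low-degree private neighbours found above give $|X_1'|\le|X_1|-1$ and $|Y_2'|\le|Y_2|-1$, so the cardinality count improves by two and delivers $\gst(G)\ge\Phi_{1,2}+1$, completing the crossed case and hence the claim. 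Beyond the bookkeeping, the only delicate point is the exchange argument that produces the low-degree private neighbour; everything else is the degree-accounting already used in Claims~\ref{c:claim1.2} and~\ref{c:claim1.3}.
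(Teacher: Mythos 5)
Your proof is correct, and for the two ``generic'' configurations it is essentially the paper's argument: the same case split on where the two weak vertices of $A$ sit, the same replacement of each weak endpoint by its neighbourhood set and each strong endpoint by itself, and the same degree accounting via Equation~(\ref{Eq:3}), Inequality~(\ref{Eq:3b}) and Claim~\ref{c:claim1.1}(a). (The paper additionally splits the ``both weak in the same graph'' case on whether $|X_1|+|Y_1|=1$ or $\ge 2$ to squeeze out an extra unit, but your single set $(S\cap V_1)\cup X_1\cup Y_1$ already gives the required $\ge\Phi_{1,2}$.) Where you genuinely diverge is the degenerate crossed sub-case $X_2=Y_1=\emptyset$ (in the paper's labelling, $|X_1|+|Y_2|=0$). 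The paper's resolution is much lighter: it observes that when the relevant neighbourhood sets are empty the glued vertices $u$ and $v$ have the \emph{same} degrees in $G$ as the corresponding endpoints have in $G_1$ and $G_2$, so the plain restrictions $(S\cap V_1)\cup\{v_1\}$ and $(S\cap V_2)\cup\{u_2\}$ are already strong dominating sets of no larger total size than $|S|$, giving $\gst(G_1)+\gst(G_2)\le\gst(G)$; the missing unit then comes for free from the pre-derived Inequality~(\ref{Eq:3c}), $\Phi_{1,2}\le\gst(G_1)+\gst(G_2)-1$, rather than from any saving in the set sizes. Your route instead extracts the extra two units from the sets themselves, via the exchange argument showing that $u$ (and symmetrically $v$) must have a low-degree private neighbour because every $\gst$-set contains $u$ and $v$; I checked the dichotomy in that argument (either $S\setminus\{u\}$ works, or swapping $u$ for a strictly higher-degree $x^*\in X_1$ works) and the resulting bound $|X_1'|\le|X_1|-1$, and it all goes through. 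So both approaches are valid; the paper's is shorter because it recycles (\ref{Eq:3c}), while yours is self-contained at the cost of the minimality/exchange lemma.
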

\proof Suppose that exactly two vertices in $A$ are strong. Suppose that $u_1$ and $u_2$ are strong and $v_1$ and $v_2$ are weak, or $u_1$ and $u_2$ are weak and $v_1$ and $v_2$ are strong. Renaming vertices if necessary, we may assume that the vertices $u_1$ and $u_2$ are strong and $v_1$ and $v_2$ are weak. In this case, the set
\[
S_i = ( S \cap V_i ) \cup (\{u_i\} \cup Y_i) \1
\]
is a strong dominating set of $G_i$ for $i \in [2]$, implying by Equation~(\ref{Eq:3}) and Claim~\ref{c:claim1.1}(a) that

\[
\begin{array}{lcl}
\gst(G_1) + \gst(G_2) & \le & |S_1| + |S_2| \1 \\
& \le & \displaystyle{ |S| - 1 + \sum_{i=1}^2 |Y_i|  } \2 \\
& = & \displaystyle{ |S| + \Psi_{1,2} - 5 - \sum_{i=1}^2 |X_i| } \1 \\
& \le & \displaystyle{ \gst(G) - 7 + \Psi_{1,2} }, \1 \\
\end{array}
\]
or, equivalently, $\gst(G) \ge \gst(G_1) + \gst(G_2) + 7 - \Psi_{1,2} > \Phi_{1,2}$, as desired. Hence we may assume that exactly one of $u_1$ and $u_2$ is strong and exactly one of $v_1$ and $v_2$ is strong.

Suppose that the vertices $u_1$ and $v_1$ are strong and $u_2$ and $v_2$ are weak. In this case, the set
\[
S_2 = (S \cap V_2) \cup (X_2 \cup Y_2) \1
\]
is a strong dominating set of $G_2$. By Inequality~(\ref{Eq:3b}), $|X_1| + |Y_1| \ge 1$.  If $|X_1| + |Y_1| = 1$, then renaming $u_1$ and $v_1$ if necessary, we may assume that $\deg_{G_1}(u_1) = 1$ and $\deg_{G_1}(v_1) = 2$. In this case, the set
\[
S_1 = (S \cap V_1) \cup \{v_1\} \1
\]
is a strong dominating set of $G_1$, and so by Equation~(\ref{Eq:3}), we have
\[
\begin{array}{lcl}
\gst(G_1) + \gst(G_2) & \le & |S_1| + |S_2| \1 \\
& \le & \displaystyle{ |S| - 2 + 1 + |X_2| + |Y_2|  } \1 \\
& = & \displaystyle{ |S| - 1 + \Psi_{1,2} - 4 - |X_1| - |Y_1| } \1 \\
& = & \displaystyle{ |S| + \Psi_{1,2} - 6}.
\end{array}
\]

If $|X_1| + |Y_1| \ge 2$, then we consider the set
\[
S_1 = (S \cap V_1) \cup \{u_1,v_1\} \1
\]
which is a strong dominating set of $G_1$. By Equation~(\ref{Eq:3}) and Claim~\ref{c:claim1.1}(a), we have
\[
\begin{array}{lcl}
\gst(G_1) + \gst(G_2) & \le & |S_1| + |S_2| \1 \\
& \le & \displaystyle{ |S| - 2 + 2 + |X_2| + |Y_2|  } \1 \\
& = & \displaystyle{ |S| + \Psi_{1,2} - 4 - |X_1| - |Y_1| } \1 \\
& \le & \displaystyle{ |S| + \Psi_{1,2} - 6}.
\end{array}
\]

In both cases, $\gst(G) \ge \gst(G_1) + \gst(G_2) + 6 - \Psi_{1,2} = \Phi_{1,2} + 1 > \Phi_{1,2}$, as desired. Analogous argument yield the desired bound when the vertices $u_1$ and $v_1$ are weak and $u_2$ and $v_2$ are strong.

Renaming vertices if necessary, we may therefore assume that the vertices $u_1$ and $v_2$ are strong and $u_2$ and $v_1$ are weak. We now consider the sets
\[
S_1 = (S \cap V_1) \cup (\{u_1\} \cup Y_1) \hspace*{0.5cm} \mbox{and} \hspace*{0.5cm} S_2 = (S \cap V_2)  \cup (\{v_2\} \cup X_2).
\]

The sets $S_1$ and $S_2$ are strong dominating set of $G_1$ and $G_2$, respectively. By Equation~(\ref{Eq:3}), we have
\[
\begin{array}{lcl}
\gst(G_1) + \gst(G_2) & \le & |S_1| + |S_2| \1 \\
& \le & \displaystyle{ |S| - 2 + 2 + |Y_1| + |X_2| } \1 \\
& = & \displaystyle{ |S| + \Psi_{1,2} - 4 - |X_1| - |Y_2| }.
\end{array}
\]

If $|X_1| + |Y_2| \ge 1$, then $\gst(G) = |S| \ge \gst(G_1) + \gst(G_2) + 5 - \Psi_{1,2} = \Phi_{1,2}$, as desired. Hence we may assume that $|X_1| + |Y_2| = 0$. By Claim~\ref{c:claim1.1}(a), $|X_2| \ge 2$ and $|Y_1| \ge 2$, and so $\deg_G(u) = |X_2| + 1 \ge 3$ and $\deg_G(v) = |Y_1| + 1 \ge 3$. In this case,  we consider the sets
\[
S_1 = (S \cap V_1) \cup \{v_1\} \hspace*{0.5cm} \mbox{and} \hspace*{0.5cm} S_2 = (S \cap V_2) \cup \{u_2\}.
\]

The sets $S_1$ and $S_2$ are strong dominating set of $G_1$ and $G_2$, respectively. Thus, $\gst(G_1) + \gst(G_2) \le |S_1| + |S_2| = |S| = \gst(G)$, implying by Inequality~(\ref{Eq:3c}) that $\gst(G) \ge \Phi_{1,2} + 1 > \Phi_{1,2} \le \gst(G)$, as desired.~\smallqed

\begin{subclaim}
\label{c:claim1.5}
If exactly one vertex in $A$ is strong, then $\gst(G) \ge \Phi_{1,2}$.
\end{subclaim}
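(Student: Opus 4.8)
The plan is to mirror the constructions used in the previous subclaims, exploiting the two symmetries available here: interchanging the two gluing vertices (which swaps $u_i \leftrightarrow v_i$ and $X_i \leftrightarrow Y_i$) and interchanging the two graphs (which swaps the index~$i$). Both symmetries fix $\Psi_{1,2}$ and $\Phi_{1,2}$, so after renaming I may assume that the unique strong vertex in $A$ is $u_1$; hence $v_1$, $u_2$ and $v_2$ are all weak. The recipe for building a strong dominating set $S_i$ of $G_i$ from $S$ is the familiar one: start from $S \cap V_i$; for a strong end-vertex of the gluing edge insert the vertex itself, and for a weak end-vertex insert its entire outside-neighbourhood ($X_i$ for $u_i$ and $Y_i$ for $v_i$).

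Concretely, I would take
\[
S_1 = (S \cap V_1) \cup (\{u_1\} \cup Y_1) \qquad\text{and}\qquad S_2 = (S \cap V_2) \cup (X_2 \cup Y_2).
\]
The verification that these are strong dominating sets is the one place requiring care, but it is routine: every vertex of $V_i$ keeps its $G$-degree in $G_i$, so all strong-domination relationships realized inside $V_i$ by $S \cap V_i$ persist. Since $u_1$ is strong, placing $u_1$ in $S_1$ strongly dominates every vertex of $X_1$ exactly as $u$ did in $G$; for each weak vertex its outside-neighbourhood lies wholly in the set (so those neighbours need no domination), and the weak vertex itself is strongly dominated by a maximum-degree vertex of that neighbourhood, whose degree strictly exceeds that of the weak vertex by the definition of weakness.

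The counting is then immediate. Using $|S| = \gst(G)$ with $u, v \in S$, we have $|S \cap V_1| + |S \cap V_2| = \gst(G) - 2$, and the displayed sets give $|S_1| \le |S \cap V_1| + 1 + |Y_1|$ and $|S_2| \le |S \cap V_2| + |X_2| + |Y_2|$. Hence
\[
\gst(G_1) + \gst(G_2) \le |S_1| + |S_2| \le \gst(G) - 1 + |Y_1| + |X_2| + |Y_2|.
\]
Substituting $|Y_1| + |X_2| + |Y_2| = \Psi_{1,2} - 4 - |X_1|$ from Equation~(\ref{Eq:3}) yields $\gst(G_1) + \gst(G_2) \le \gst(G) + \Psi_{1,2} - 5 - |X_1| \le \gst(G) + \Psi_{1,2} - 5$, which rearranges, via Equation~(\ref{Eq:2}), to $\gst(G) \ge \gst(G_1) + \gst(G_2) + 5 - \Psi_{1,2} = \Phi_{1,2}$, as required.

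I do not expect a genuine obstacle in this subclaim. Unlike the two-strong case (Claim~\ref{c:claim1.4}), where one must split into several configurations and occasionally fall back on Inequality~(\ref{Eq:3c}), here the single strong vertex contributes only~$1$ to the cost while the three weak vertices contribute their full neighbourhoods, so the estimate carries the slack term $|X_1| \ge 0$ and requires no appeal to Claim~\ref{c:claim1.1}. The only point to watch is the degenerate possibility $X_1 = \emptyset$ (where $u_1$ is strong vacuously): then $u_1 \in S_1$ is still needed in order to dominate $u_1$ itself, and the count above is unaffected.
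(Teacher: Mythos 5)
Your construction of $S_1=(S\cap V_1)\cup(\{u_1\}\cup Y_1)$ and $S_2=(S\cap V_2)\cup(X_2\cup Y_2)$ and the ensuing count are exactly what the paper does for this subclaim, down to the slack term $|X_1|\ge 0$ being discarded at the end. The proposal is correct and essentially identical to the paper's proof.
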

\proof Suppose that exactly one vertex in $A$ is strong. Renaming vertices if necessary, we may assume that $u_1$ is strong, and so the vertices $u_2$, $v_1$ and $v_2$ are weak. In particular, this implies that $|X_2| \ge 1$ and  $|Y_i| \ge 1$ for $i \in [2]$. We now consider the sets
\[
S_1 = (S \cap V_1) \cup (\{u_1\} \cup Y_1) \hspace*{0.5cm} \mbox{and} \hspace*{0.5cm} S_2 = (S \cap V_2) \cup (X_2 \cup Y_2).
\]

The set $S_i$ is a strong dominating set of $G_i$ for $i \in [2]$, implying by Equation~(\ref{Eq:3}) that
\[
\begin{array}{lcl}
\gst(G_1) + \gst(G_2) & \le & |S_1| + |S_2| \1 \\
& \le & \displaystyle{ |S| - 1 + |X_2| + \sum_{i=1}^2 |Y_i|  } \2 \\
& = & \displaystyle{ |S| - 5 + \Psi_{1,2} - |X_1|} \2 \\
& \le & \displaystyle{ |S| - 5 + \Psi_{1,2},}
\end{array}
\]
yielding the desired result.~\smallqed

\begin{subclaim}
\label{c:claim1.6}
If no vertex in $A$ is strong, then $\gst(G) \ge \Phi_{1,2}$.
\end{subclaim}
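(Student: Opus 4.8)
The plan is to mirror the constructions used in the previous subclaims, now in their most symmetric form. Since no vertex of $A$ is strong, all four vertices $u_1,v_1,u_2,v_2$ are weak, and by definition of ``weak'' this forces $|X_i|\ge 1$ and $|Y_i|\ge 1$ for $i\in[2]$. The natural candidates are the sets obtained by absorbing all neighbors of the gluing vertices into each side, namely
\[
S_1 = (S \cap V_1) \cup (X_1 \cup Y_1) \hspace*{0.5cm} \mbox{and} \hspace*{0.5cm} S_2 = (S \cap V_2) \cup (X_2 \cup Y_2).
\]

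First I would verify that $S_i$ is a strong dominating set of $G_i$ for $i \in [2]$. The only vertices of $G_i$ that can fail to lie in $S_i$ are $u_i$, $v_i$, and the interior vertices of $V_i$. Because $u_i$ is weak, there is a neighbor $x \in X_i \subseteq S_i$ with $\deg_{G_i}(x) > \deg_{G_i}(u_i)$, so $u_i$ is strongly dominated; symmetrically $v_i$ is strongly dominated by a vertex of $Y_i$. For an interior vertex $w \in V_i \setminus S_i$, note that $S \cap V_i \subseteq S_i$ forces $w \notin S$, so $w$ is strongly dominated in $G$ by some $s \in S$. I would argue that $s$ cannot be $u$ or $v$: since $N_G(u) = X_1 \cup X_2 \cup \{v\}$ and $N_G(v) = Y_1 \cup Y_2 \cup \{u\}$, if $s \in \{u,v\}$ then $w \in X_i$ or $w \in Y_i$, placing $w$ in $S_i$, a contradiction. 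Hence $s$ is an interior vertex of $V_i$, so $s \in S \cap V_i \subseteq S_i$; because interior vertices retain their degrees under the edge gluing, the strong domination of $w$ by $s$ transfers verbatim to $G_i$.

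The counting step is then immediate. Using $u, v \in S$ we have $|S \cap V_1| + |S \cap V_2| = |S| - 2$, and by Equation~(\ref{Eq:3}) we have $|X_1| + |Y_1| + |X_2| + |Y_2| = \Psi_{1,2} - 4$. Therefore
\[
\gst(G_1) + \gst(G_2) \le |S_1| + |S_2| \le (|S| - 2) + (\Psi_{1,2} - 4) = \gst(G) + \Psi_{1,2} - 6,
\]
which rearranges to $\gst(G) \ge \gst(G_1) + \gst(G_2) + 6 - \Psi_{1,2} = \Phi_{1,2} + 1 > \Phi_{1,2}$, in fact beating the target by one.

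I do not anticipate a genuine obstacle in this case: it is the most favorable of the six, since all four gluing vertices being weak lets every neighbor set be poured into the dominating sets simultaneously, leaving no vertex that needs $u_i$ or $v_i$ itself. The only point requiring care is confirming that no interior vertex is ``orphaned'' when passing from $G$ to $G_i$, that is, that every vertex formerly strongly dominated by $u$ or $v$ already lies in $X_i \cup Y_i$; this rests entirely on the description of $N_G(u)$ and $N_G(v)$ together with the degree-preservation of interior vertices, both of which are elementary consequences of the edge gluing operation.
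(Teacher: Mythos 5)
Your proposal is correct and follows essentially the same route as the paper: the identical sets $S_i = (S \cap V_i) \cup (X_i \cup Y_i)$, the same count $|S_1|+|S_2| \le |S| - 2 + \Psi_{1,2} - 4$, and the same conclusion $\gst(G) \ge \Phi_{1,2}+1$. The only difference is that you spell out the verification that $S_i$ strongly dominates $G_i$, which the paper leaves implicit.
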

\proof Suppose that no vertex in $A$ are strong, and so all four vertices in $A$ are weak. In particular, this implies that $|X_i| \ge 1$ and  $|Y_i| \ge 1$ for all $i \in [2]$. The set
\[
S_i = ( S \cap V_i ) \cup (X_i \cup Y_i)  \1
\]
is a strong dominating set of $G_i$ for $i \in [2]$, implying by Equation~(\ref{Eq:3}) that
\[
\begin{array}{lcl}
\gst(G_1) + \gst(G_2) & \le & |S_1| + |S_2| \1 \\
& \le & \displaystyle{ |S| - 2 + \sum_{i=1}^2 |X_i| + \sum_{i=1}^2 |Y_i|  } \2 \\
& = & \displaystyle{ |S| - 6 + \Psi_{1,2} }, \1
\end{array}
\]
or, equivalently, $\gst(G) = |S| \ge \gst(G_1) + \gst(G_2) + 6 - \Psi_{1,2} = \Phi_{1,2} + 1 > \Phi_{1,2}$.~\smallqed

\medskip
The proof of Claim~\ref{c:claim1} now follows from Claims~\ref{c:claim1.1},~\ref{c:claim1.2},~\ref{c:claim1.3},~\ref{c:claim1.4},~\ref{c:claim1.5}, and~\ref{c:claim1.6}.~\smallqed

\begin{claim}
\label{c:claim2}
If exactly one of $u$ and $v$ belong to~$S$, then $\gst(G) \ge \Phi_{1,2}$.
\end{claim}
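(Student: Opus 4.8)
The plan is to mimic Claim~\ref{c:claim1}, turning the hypothesis into an asymmetry between the two gluing vertices. By the symmetry of $u$ and $v$ in the gluing I may assume $u \in S$ and $v \notin S$; the case $v \in S$, $u \notin S$ is identical after interchanging the roles of $X_i$ and $Y_i$. From $S$ I would construct strong dominating sets $S_i = (S \cap V_i) \cup W_i$ of $G_i$, where each $W_i \subseteq \{u_i,v_i\} \cup (X_i \setminus S) \cup (Y_i \setminus S)$ is disjoint from $S$. Because $u_i, v_i \notin S$ and $|S \cap \{u,v\}| = 1$, we have $\sum_{i=1}^2 |S \cap V_i| = |S| - 1$ and $|S_i| = |S \cap V_i| + |W_i|$, so the target reduces to the single inequality $|W_1| + |W_2| \le |X_1| + |X_2| + |Y_1| + |Y_2|$. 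By Equation~(\ref{Eq:3}) this is precisely $|S_1| + |S_2| \le |S| - 5 + \Psi_{1,2}$, and combined with $\gst(G_i) \le |S_i|$ it gives $\gst(G) = |S| \ge \gst(G_1) + \gst(G_2) + 5 - \Psi_{1,2} = \Phi_{1,2}$.

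The engine of the argument is that $v \notin S$, so no vertex of $G$ is strongly dominated by $v$. Hence for each $i$ every vertex of $V_i \setminus S$ is strongly dominated in $G$ either by a vertex of $V_i \cap S$, which transfers verbatim to $G_i$ since interior vertices retain their degree, or by $u$, in which case it lies in $X_i$. Letting $Z_i$ be the set of vertices of $X_i \setminus S$ strongly dominated in $G$ by $u$ alone, the set $Z_i$ records the only genuine cross-dependencies, and every $x \in Z_i$ satisfies $\deg_{G_i}(x) \le \deg_G(u) = |X_1| + |X_2| + 1$. I would take $W_i = \{u_i\}$ when $u_i$ is strong and $W_i = \{u_i\} \cup Z_i$ when $u_i$ is weak, enlarging $W_i$ by $v_i$ exactly when $\deg_{G_i}(u_i) < \deg_{G_i}(v_i)$, and in the degenerate case $X_i = \emptyset$ taking $W_i = \{v_i\}$ instead. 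The strong/weak definitions make $S_i$ a strong dominating set of $G_i$: a strong $u_i$ strongly dominates all of $X_i$, a weak $u_i$ contributes $Z_i$ to $S_i$ while $u_i$ itself lies in $S_i$, and $v_i$ is dominated by $u_i$ or by the added vertex.

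For the counting I would split, as in Claim~\ref{c:claim1}, on the strong/weak status of $u_1$ and $u_2$, using $|X_i| + |Y_i| \ge 1$ from Inequality~(\ref{Eq:3b}). If $u_i$ is strong then $|W_i| \le 2$, and the only way $|W_i| = 2$ is $\deg_{G_i}(u_i) < \deg_{G_i}(v_i)$, which forces $|Y_i| > |X_i| \ge 1$ and hence $|X_i| + |Y_i| \ge 3$; the degenerate case gives $|W_i| = 1 \le |Y_i|$. If $u_i$ is weak then $|W_i| = |Z_i| + 1$, with possibly one more for $v_i$, and provided the highest-degree neighbour $x^*$ witnessing the weakness of $u_i$ lies outside $Z_i$ (because $x^* \in S$, or $x^*$ is strongly dominated within $V_i \cap S$) one gets $|Z_i| \le |X_i| - 1$ and again $|W_i| \le |X_i| + |Y_i|$. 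In all of these cases the per-graph bound $|W_i| \le |X_i| + |Y_i|$ holds, and summing over $i$ finishes the proof.

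The hard part, which is the feature genuinely new to Claim~\ref{c:claim2} and absent from Claim~\ref{c:claim1}, is the corner case where $v_i$ is a leaf of $G_i$ (so $Y_i = \emptyset$) while $u_i$ is weak and its weakness-witness $x^*$ lies in $Z_i$, that is, $x^*$ is strongly dominated in $G$ by $u$ alone. Dominating the leaf $v_i$ then forces $u_i$ into $S_i$ on top of $Z_i = X_i$, producing a local excess of one that the budget $|X_i| + |Y_i| = |X_i|$ cannot absorb. I expect this to be the crux, and the route I would take is an exchange argument exploiting the minimality of $S$: a vertex $x^*$ with $\deg_{G_i}(x^*) > \deg_{G_i}(u_i)$ that is excluded from the minimum strong dominating set $S$ while being dominated only by $u$ forces its private neighbours into $S$, and replacing them by $x^*$ produces a minimum strong dominating set containing $x^*$, contradicting $x^* \in Z_i$. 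Showing that this exchange is consistent with the earlier choice that $|S \cap \{u,v\}|$ is minimum, so that the corner case arises in at most one subgraph whose companion then carries a compensating surplus forced by $\deg_{G_i}(x^*) \le |X_1| + |X_2| + 1$, is where the real work lies; once it is handled, the per-graph bound holds for both $i$ and yields $\gst(G) \ge \Phi_{1,2}$.
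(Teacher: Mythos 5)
Your skeleton is the same as the paper's: assume $u \in S$, $v \notin S$ by symmetry, split on the strong/weak status of $u_1$ and $u_2$, build $S_i$ from $S \cap V_i$ plus a small correction set, and close the count through Equation~(\ref{Eq:3}). But as written this is a plan, not a proof, and the missing piece is exactly where you flag it. The corner case ($Y_i = \emptyset$, $u_i$ weak, $Z_i = X_i$) is genuinely reachable, and the exchange argument you sketch to rule it out does not go through: a vertex $x^* \in X_i \setminus S$ that happens to be strongly dominated only by $u$ need not have any private neighbours forced into $S$ — all of its other neighbours may lie in $S$ for unrelated reasons, or be strongly dominated by third vertices — so there is nothing to swap $x^*$ in for and no contradiction with the minimality of $S$ or with the minimality of $|S \cap \{u,v\}|$. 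Since you explicitly defer ``the real work'' to this unestablished step, the argument has a genuine gap.

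The source of the trouble is your choice of correction set. Taking $W_i = \{u_i\} \cup Z_i$ in the weak case buys you the domination of a pendant $v_i$, but the $+1$ for $u_i$ is what can push $|W_i|$ to $|X_i| + 1$ and break the budget $|X_i| + |Y_i|$. The paper avoids the issue by a coarser construction: in Claims~\ref{c:claim2.1}--\ref{c:claim2.3} it takes $S_i = (S \cap V_i) \cup \{u_i\}$ when $u_i$ is strong and $S_i = (S \cap V_i) \cup X_i$ when $u_i$ is weak, never paying for $u_i$ in the weak case (a maximum-degree vertex of $X_i$ strongly dominates $u_i$ from inside $S_i$), so $|W_i| \le |X_i|$ always and the count closes at once with slack to spare. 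If you want to salvage your finer construction, the right move is not an exchange argument on $x^*$ but either to switch to the paper's correction sets, or to observe that in your corner case the surplus of one in $G_i$ is absorbed globally because the companion inequality already carries strict slack; neither is done in your write-up.
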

\proof Renaming vertices if necessary, we may assume that $u \in S$ and $v \notin S$.  We proceed further with the following subclaims.

\begin{subclaim}
\label{c:claim2.1}
If both $u_1$ and $u_2$ are strong, then $\gst(G) \ge \Phi_{1,2}$.
\end{subclaim}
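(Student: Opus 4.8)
The plan is to mimic the strategy of the subclaims of Claim~\ref{c:claim1}: from the chosen $\gst$-set $S$ I will build strong dominating sets $S_1$ and $S_2$ of $G_1$ and $G_2$ of the form $S_i=(S\cap V_i)\cup C_i$, and then read off the lower bound on $\gst(G)=|S|$. Since $u\in S$ and $v\notin S$, and $V(G)=V_1\cup V_2\cup\{u,v\}$, we have $|S|=|S\cap V_1|+|S\cap V_2|+1$. Hence if each $S_i$ is a strong dominating set of $G_i$ with $|C_i|=c_i$, then $\gst(G_1)+\gst(G_2)\le|S_1|+|S_2|=(|S|-1)+(c_1+c_2)$. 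By Equation~(\ref{Eq:3}) we have $\Psi_{1,2}-4=|X_1|+|X_2|+|Y_1|+|Y_2|$, so by Equation~(\ref{Eq:2}) it suffices to produce such sets with $c_1+c_2\le|X_1|+|X_2|+|Y_1|+|Y_2|$; this yields $\gst(G_1)+\gst(G_2)\le|S|+\Psi_{1,2}-5$, that is, $\gst(G)\ge\Phi_{1,2}$.

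For each $i\in[2]$ I would choose $C_i$ in one of three cases, using that $u_i$ is strong. If $|Y_i|\le|X_i|$, set $C_i=\{u_i\}$: then $u_i$ strongly dominates every vertex of $X_i$ (as $u_i$ is strong) and also $v_i$, since $\deg_{G_i}(v_i)=|Y_i|+1\le|X_i|+1=\deg_{G_i}(u_i)$. If $X_i=\emptyset$ and $|Y_i|\ge1$, then $u_i$ is a leaf adjacent to $v_i$; set $C_i=\{v_i\}$, so that $v_i$ strongly dominates $u_i$. Finally, if $1\le|X_i|<|Y_i|$, set $C_i=\{u_i,v_i\}$. In every case I must check that $S_i=(S\cap V_i)\cup C_i$ strongly dominates $G_i$. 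The vertices $u_i$ and $v_i$ are handled as just described, and any internal vertex $w\in V_i\setminus S_i$ is strongly dominated in $G$ by a neighbor $z\in S$; since $w$ lies in $G_i$ we have $z\in V_i\cup\{u,v\}$, and here the hypothesis $v\notin S$ is essential: it rules out $z=v$, so either $z\in S\cap V_i\subseteq S_i$ (and $z$ strongly dominates $w$ in $G_i$ too, the degrees of internal vertices being unchanged under gluing), or $z=u$, in which case $w\in X_i$ and $w$ is strongly dominated by $u_i\in S_i$ because $u_i$ is strong.

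It then remains to verify the cost bound $c_i\le|X_i|+|Y_i|$ in each case, which is where Inequality~(\ref{Eq:3b}) enters. In the first two cases $c_i=1$ and $|X_i|+|Y_i|\ge1$ by~(\ref{Eq:3b}); in the third case $c_i=2$ while $|Y_i|>|X_i|\ge1$ forces $|X_i|+|Y_i|\ge3$. Summing over $i$ gives $c_1+c_2\le|X_1|+|X_2|+|Y_1|+|Y_2|$, completing the argument. The only real subtlety---the main obstacle---is the verification that each $S_i$ is genuinely a strong dominating set of $G_i$: one must confirm that the strong domination of every internal vertex survives the passage from $G$ to $G_i$, which rests squarely on the two standing hypotheses $v\notin S$ (so that no internal vertex was dominated through $v$) and $u_i$ strong (so that $u_i$ inherits the domination that $u$ supplied to $X_i$ in $G$).
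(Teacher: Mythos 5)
Your proposal is correct, and it follows the same general framework as the paper: cut $S$ along $V_1,V_2$, add a small correction set $C_i$ on the boundary to recover a strong dominating set of $G_i$, and account for the cost against $\Psi_{1,2}$ via Equations~(\ref{Eq:2}) and~(\ref{Eq:3}). The difference is in the correction set. The paper simply takes $S_i=(S\cap V_i)\cup\{u_i\}$ for both $i$ (total cost $2$, then uses $\Psi_{1,2}\ge 6$ to conclude), asserting without further comment that each $S_i$ is a strong dominating set of $G_i$. That assertion is exactly the point your case analysis addresses: when $|X_i|<|Y_i|$, the vertex $u_i$ need not strongly dominate $v_i$ in $G_i$, and the vertex of $S$ that strongly dominated $v$ in $G$ may have been $u$ itself or a vertex of $Y_{3-i}$, neither of which survives the passage to $G_i$. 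Your three-way split (take $u_i$ alone when $|Y_i|\le|X_i|$, take $v_i$ alone when $X_i=\emptyset$, take both when $1\le|X_i|<|Y_i|$) repairs this, and the charging scheme $c_i\le|X_i|+|Y_i|$ — using Inequality~(\ref{Eq:3b}) in the cheap cases and $|X_i|+|Y_i|\ge 3$ in the expensive one — still lands on $\gst(G)\ge\Phi_{1,2}$. Your intermediate inequality is weaker than the paper's (you get $\gst(G)\ge\Phi_{1,2}$ on the nose rather than $\Phi_{1,2}+\Psi_{1,2}-6$), but it is fully justified, whereas the paper's shortcut leaves the strong domination of $v_i$ unverified. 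The remaining details (internal vertices being unaffected because $v\notin S$, and $u_i$ strong absorbing whatever $u$ dominated in $X_i$) match the paper's implicit reasoning.
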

\proof Suppose that both $u_1$ and $u_2$ are strong. In this case, the set $S_i = (S \cap V_i) \cup \{u_i\}$ is a strong dominating set of $G_i$ for $i \in [2]$, and so $\gst(G_i)  \le |S_i|$ for $i \in [2]$. Thus, by Equation~(\ref{Eq:2}) we have $\Phi_{1,2} + \Psi_{1,2} - 5 = \gst(G_1) + \gst(G_2)  \le |S_1| + |S_2| = |S| + 2 - 1 = \gst(G) + 1$, or, equivalently, $\gst(G) \ge \Phi_{1,2} + \Psi_{1,2} - 6 \ge \Phi_{1,2}$, as desired.~\smallqed

\begin{subclaim}
\label{c:claim2.2}
If exactly one of $u_1$ and $u_2$ is strong, then $\gst(G) \ge \Phi_{1,2}$.
\end{subclaim}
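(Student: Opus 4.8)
The plan is to mirror the constructions used in the subclaims of Claim~\ref{c:claim1}: from the chosen $\gst$-set $S$ of $G$ I will build strong dominating sets $S_1$ of $G_1$ and $S_2$ of $G_2$, and then combine $\gst(G_1)+\gst(G_2)\le |S_1|+|S_2|$ with the counting identity $|S\cap V_1|+|S\cap V_2|=|S|-1$ (valid because $u\in S$ and $v\notin S$) and Equation~(\ref{Eq:3}) to reach $\gst(G)=|S|\ge\Phi_{1,2}$. Concretely, since exactly one of $u_1,u_2$ is strong, renaming $G_1$ and $G_2$ if necessary I may assume that $u_1$ is strong and $u_2$ is weak. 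Because $u_1$ is strong it strongly dominates every vertex of $X_1$ in $G_1$, so I will take $S_1=(S\cap V_1)\cup\{u_1\}$ as the skeleton for $G_1$; because $u_2$ is weak, $u_2$ cannot strongly dominate its heaviest neighbour in $G_2$, so I must insert the neighbours $X_2$ (it suffices to insert a single maximum-degree vertex of $X_2$, but inserting all of $X_2$ keeps the bookkeeping uniform) and take $S_2=(S\cap V_2)\cup X_2$ as the skeleton for $G_2$.

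The backbone of the verification is a \emph{carry-over} observation: since $v\notin S$ and the only neighbours of $u$ inside $V_i$ are the vertices of $X_i$, every vertex of $V_i\setminus X_i$ that lies outside $S$ must be strongly dominated in $G$ by a vertex of $S\cap V_i$, and this domination transfers verbatim to $G_i$ because the degree and the neighbourhood of any vertex of $V_i$ are the same in $G$ and in $G_i$. Consequently the skeletons above already strongly dominate all of $V_1$ and all of $V_2$ together with $u_1$; the only vertices still needing attention are $v_1$ and $v_2$.

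To dominate $v_1$ and $v_2$ I will split on how $v$ is strongly dominated in $G$. As $v\notin S$, it has a dominator $z\in S$ with $z\in Y_1\cup Y_2\cup\{u\}$. If $z\in Y_i$, then $z\in S\cap V_i$ and, since $\deg_{G_i}(v_i)\le\deg_G(v)\le\deg_G(z)=\deg_{G_i}(z)$, the vertex $z$ already strongly dominates $v_i$ in $G_i$, so that $v_i$ comes for free; otherwise $z=u$, which forces the degree inequality $|Y_1|+|Y_2|\le|X_1|+|X_2|$ that I will use to pay for dominating both $v_1$ and $v_2$. When a vertex $v_i$ is not free I dominate it by adjoining $\{v_i\}$ to $S_i$ if $v_i$ is strong, or the set $Y_i$ to $S_i$ if $v_i$ is weak (in the latter case the heaviest neighbour of $v_i$ then strongly dominates it). Feeding the resulting cardinalities into the identity $|S\cap V_1|+|S\cap V_2|=|S|-1$ and Equation~(\ref{Eq:3}) should yield $|S_1|+|S_2|\le|S|-5+\Psi_{1,2}$, which is exactly $\gst(G)\ge\Phi_{1,2}$.

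I expect the genuine difficulty to lie not in the generic regime but in the degenerate boundary configurations, where $|X_i|+|Y_i|$ attains its forced minimum value $1$ from Inequality~(\ref{Eq:3b}); for instance when $Y_i=\emptyset$ (so $v_i$ is a pendant vertex) or $X_i=\emptyset$. In such cases the naive count above can be short by one, and the argument must be sharpened by exploiting that a degree-one vertex is automatically strong and is strongly dominated by its unique, necessarily heavier-or-equal, neighbour, and by choosing carefully whether to adjoin $u_i$, $v_i$, $X_i$, or $Y_i$. Ensuring that the inequality $|S_1|+|S_2|\le|S|-5+\Psi_{1,2}$ survives in every such subcase is the part I expect to require the most care.
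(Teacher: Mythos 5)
Your skeleton is exactly the paper's: take $S_1=(S\cap V_1)\cup\{u_1\}$ and $S_2=(S\cap V_2)\cup X_2$, feed $|S\cap V_1|+|S\cap V_2|=|S|-1$ into Equation~(\ref{Eq:3}), and conclude. You have, moreover, correctly spotted the one point the paper glosses over: nothing in the skeleton is guaranteed to strongly dominate $v_1$ and $v_2$ in $G_1$ and $G_2$ (the paper simply asserts that each $S_i$ is a strong dominating set of $G_i$, and leans on Claim~\ref{c:claim1.1}(a), i.e.\ $|Y_1|+|Y_2|\ge 2$, to absorb the slack). The problem is that your repair of this point is not actually carried out, and the place where you stop is precisely where the difficulty lives. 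In the branch where the only strong dominator of $v$ in $G$ is $u$ itself, both $v_1$ and $v_2$ may need extra vertices; your budget for additions beyond the skeleton is $|X_1|+|Y_1|+|Y_2|-1$, while the cost of adjoining $Y_1\cup Y_2$ (or $\{v_i\}$ when $Y_i=\emptyset$, since a pendant $v_i$ is adjacent to nothing in your $S_i$) can be $|Y_1|+|Y_2|$ or more. When $X_1=\emptyset$ the inequality $|Y_1|+|Y_2|\le|X_1|+|X_2|$ that you extract from $z=u$ gives you information about $|X_2|$, which has already been spent on the skeleton of $S_2$, and does not close the deficit of one. You acknowledge this ("the naive count above can be short by one") but offer no resolution, so as written the proof does not establish $|S_1|+|S_2|\le|S|-5+\Psi_{1,2}$ in all cases.

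To close the gap you would need either to import and justify the inequality $|Y_1|+|Y_2|\ge 2$ in the present setting ($u\in S$, $v\notin S$ — note that the paper's proof of Claim~\ref{c:claim1.1}(a) for the $Y$-side uses $v\in S$, so it does not transfer verbatim), or to handle the degenerate configurations by a different exchange: for instance, when $Y_i=\emptyset$ the vertex $v_i$ has degree one in $G_i$ and is strongly dominated by $u_i$, so putting $u_i$ rather than $v_i$ into $S_i$ dominates both $v_i$ and $X_i$ at once and can be traded against the vertex $u$ already counted in $S$. Until one of these is done, the subclaim is not proved by your argument.
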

\proof Renaming vertices if necessary, we may assume that $u_1$ is strong and $u_2$ is weak. In this case we let
\[
S_1 = (S \cap V_1) \cup \{u_1\} \hspace*{0.5cm} \mbox{and} \hspace*{0.5cm} S_2 = (S \cap V_2)  \cup X_2.
\]

The set $S_i$ is a strong dominating set of $G_i$ for $i \in [2]$. By Claim~\ref{c:claim1.1}(a), $|Y_1| + |Y_2| \ge 2$. Thus since $|X_1| \ge 0$, we infer from Equation~(\ref{Eq:3}) that
\[
\begin{array}{lcl}
\gst(G_1) + \gst(G_2) & \le & |S_1| + |S_2| \1 \\
& \le & \displaystyle{ |S| + |X_2|  } \2 \\
& = & \displaystyle{ |S| + \Psi_{1,2} - 4 - |X_1| - |Y_1| - |Y_2| } \1 \\
& \le & \displaystyle{ |S| + \Psi_{1,2} - 6 }, \1
\end{array}
\]
or, equivalently, $\gst(G) = |S| \ge \gst(G_1) + \gst(G_2) + 6 - \Psi_{1,2} = \Phi_{1,2} + 1 > \Phi_{1,2}$.~\smallqed

\begin{subclaim}
\label{c:claim2.3}
If neither $u_1$ nor $u_2$ is strong, then $\gst(G) \ge \Phi_{1,2}$.
\end{subclaim}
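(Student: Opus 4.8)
The plan is to continue exactly as in the previous subclaims: from the $\gst$-set $S$ of $G$ (with $u\in S$, $v\notin S$, and both $u_1,u_2$ weak, so that $|X_1|,|X_2|\ge 1$), I would construct strong dominating sets $S_1,S_2$ of $G_1,G_2$, bound $|S_1|+|S_2|$, and then invoke $\gst(G_i)\le|S_i|$ together with Equation~(\ref{Eq:3}) to deduce $\gst(G)=|S|\ge\Phi_{1,2}$. Concretely it suffices to prove $|S_1|+|S_2|\le |S|+\Psi_{1,2}-5$, and since $|S\cap V_1|+|S\cap V_2|=|S|-1$ this amounts to showing that the number of vertices we must adjoin outside $S$ is at most $\sum_{i=1}^2|X_i|+\sum_{i=1}^2|Y_i|$.

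Because both $u_1,u_2$ are weak, each $u_i$ has a neighbour in $X_i$ of strictly larger degree; hence placing $X_i$ into $S_i$ strongly dominates $u_i$ and keeps every vertex of $X_i$ strongly dominated, these being exactly the vertices of $V_i$ that $u$ could have strongly dominated in $G$. Since $v\notin S$, no vertex of $V_i$ relied on $v$, so all remaining vertices of $V_i$ stay strongly dominated by $S\cap V_i$, degrees of $V_i$-vertices being equal in $G_i$ and in $G$. The only vertex whose domination is not inherited is $v_i$, and I would split on its status: if $v_i$ is weak, adjoining $Y_i$ supplies a strictly larger-degree neighbour of $v_i$; if $v_i$ is strong, it is enough to adjoin the single vertex $v_i$ (the vertices of $Y_i$ needing no further care, again since $v\notin S$). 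Writing each addition as an increment over $S$, the vertices added outside $S$ total $\sum_{i=1}^2|X_i\setminus S|$, plus $|Y_i|$ on each weak side and $1$ on each strong side. When every strong $v_i$ has $Y_i\neq\emptyset$, a short computation with Equation~(\ref{Eq:3}) shows this is at most $\sum_i|X_i|+\sum_i|Y_i|$, yielding $|S_1|+|S_2|\le|S|+\Psi_{1,2}-5$ and hence $\gst(G)\ge\Phi_{1,2}$; in particular the case in which both $v_1,v_2$ are weak already meets the bound.

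The hard part will be the corner in which a strong $v_i$ has $Y_i=\emptyset$, that is, $v_i$ is a leaf of $G_i$ attached only to $u_i$. Here the crude estimate $|S_i|\le|S\cap V_i|+|X_i|+1$ overspends the budget, and the bound can be recovered only through the finer count $|S_i|=|S\cap V_i|+|X_i\setminus S|+1$. Reducing to the worst case, where both $v_1,v_2$ are such leaves, the required inequality becomes
\[
|X_1\cap S|+|X_2\cap S|+|Y_1|+|Y_2|\ge 2,
\]
i.e.\ enough neighbours of $u$ and $v$ must already lie in $S$. This is where I would exploit the dominator $w\in\{u\}\cup Y_1\cup Y_2$ of $v$ guaranteed by $v\notin S$: if $w\in Y_j$ then $\deg_G(w)\ge\deg_G(v)$ forces $w$, which lies in $S\cap V_j\subseteq S_j$, to strongly dominate $v_j$ for free in $G_j$, so nothing need be paid for $v_j$; and if $w=u$ then $\deg_G(v)\le\deg_G(u)$ gives $|Y_1|+|Y_2|\le|X_1|+|X_2|$, which together with the weakness of $u_1$ and $u_2$ should be leveraged to exhibit the required $X_i$-vertices of $S$. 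Verifying this leaf corner on both sides simultaneously is the delicate step; once it is settled, everything else is routine bookkeeping with Equation~(\ref{Eq:3}), completing the proof that $\gst(G)\ge\Phi_{1,2}$ in this final subclaim.
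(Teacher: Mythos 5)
Your overall strategy is the same as the paper's: starting from the extremal set $S$ with $u\in S$ and $v\notin S$, extend $S\cap V_i$ to a strong dominating set $S_i$ of $G_i$ and do the bookkeeping with Equation~(\ref{Eq:3}). You are in fact more careful than the paper at the key point: the paper takes $S_i=(S\cap V_i)\cup X_i$ and asserts it strongly dominates $G_i$, silently ignoring the vertex $v_i$, whereas you correctly see that $v_i$ must be paid for and that the only troublesome case is when $v_i$ is a pendant neighbour of $u_i$ (so $Y_i=\emptyset$ and that side has no budget).

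However, the inequality you defer as ``the delicate step,'' namely $|X_1\cap S|+|X_2\cap S|+|Y_1|+|Y_2|\ge 2$ in the double-leaf corner, is false, so the proposal does not close. Take $G_1\cong G_2$ to be the graph on $\{u_i,v_i,x_i,p_i,q_i\}$ with edges $u_iv_i$, $u_ix_i$, $x_ip_i$, $x_iq_i$, $p_iq_i$, glued along $u_iv_i$. Then $\gst(G_1)=\gst(G_2)=2$, $\Psi_{1,2}=6$, $\Phi_{1,2}=3$, each $u_i$ is weak since $\deg_{G_i}(u_i)=2<3=\deg_{G_i}(x_i)$, each $v_i$ is a leaf, and $S=\{u,p_1,p_2\}$ is a $\gst$-set of $G$ with $|S\cap\{u,v\}|=1$, which is the minimum possible because $v$ is a leaf of $G$; here $u$ strongly dominates $v,x_1,x_2$ and $p_i$ strongly dominates $q_i$. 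In this instance $X_1\cap S=X_2\cap S=\emptyset$ and $Y_1=Y_2=\emptyset$, so your required inequality reads $0\ge 2$. The underlying difficulty is that any construction forced to contain $S\cap V_i$ overshoots in this corner: your $S_i$ has size $3=\gst(G_i)+1$ on each side because $p_i$ is dead weight, while the target $\gst(G_1)+\gst(G_2)\le |S|+\Psi_{1,2}-5$ holds here only with equality ($4\le 4$) and is witnessed by the sets $\{x_i,u_i\}$, which do not contain $S\cap V_i$. (For what it is worth, the paper's own proof of this subclaim also fails on this example: its $S_i=\{p_i,x_i\}$ does not dominate $v_i$, and its claimed conclusion $\gst(G)\ge\Phi_{1,2}+2$ is false since $\gst(G)=3=\Phi_{1,2}$.) So the subclaim itself may well be true, but both your argument and the paper's need a genuinely different treatment of the case where some $v_i$ is a pendant neighbour of $u_i$.
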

\proof In this case when neither $u_1$ nor $u_2$ is strong,  we let
\[
S_i = (S \cap V_i)  \cup X_i
\]
for $i \in [2]$. The set $S_i$ is a strong dominating set of $G_i$ for $i \in [2]$. By Equation~(\ref{Eq:3}), noting that $|Y_1| + |Y_2| \ge 2$, we have
\[
\begin{array}{lcl}
\gst(G_1) + \gst(G_2) & \le & |S_1| + |S_2| \1 \\
& \le & \displaystyle{ |S| - 1 + |X_1| + |X_2|  } \2 \\
& = & \displaystyle{ |S| - 1 + \Psi_{1,2} - 4 - |Y_1| - |Y_2| } \1 \\
& \le & \displaystyle{ |S| + \Psi_{1,2} - 7 }, \1
\end{array}
\]
or, equivalently, $\gst(G) = |S| \ge \gst(G_1) + \gst(G_2) + 7 - \Psi_{1,2} = \Phi_{1,2} + 2 > \Phi_{1,2}$.~\smallqed

\medskip
The proof of Claim~\ref{c:claim2} now follows from Claims~\ref{c:claim2.1},~\ref{c:claim2.2}, and~\ref{c:claim2.3}.~\smallqed

\begin{claim}
\label{c:claim3}
If neither $u$ nor $v$ belong to~$S$, then $\gst(G) \ge \Phi_{1,2}$.
\end{claim}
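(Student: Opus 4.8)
The plan is to follow the template of Claims~\ref{c:claim1} and~\ref{c:claim2}: starting from the chosen $\gst$-set $S$ of $G$, I would build strong dominating sets $S_1$ and $S_2$ of $G_1$ and $G_2$ and compare cardinalities, now under the hypothesis $S \cap \{u,v\} = \emptyset$. The first observations are that $S \subseteq V_1 \cup V_2$, whence $|S| = |S \cap V_1| + |S \cap V_2|$, and that, since $u,v \notin S$, each of $u$ and $v$ is strongly dominated from inside: there is a vertex $z \in (X_1 \cup X_2) \cap S$ with $\deg_G(z) \ge \deg_G(u)$ and a vertex $z' \in (Y_1 \cup Y_2) \cap S$ with $\deg_G(z') \ge \deg_G(v)$. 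In particular $X_1 \cup X_2 \ne \emptyset$ and $Y_1 \cup Y_2 \ne \emptyset$. If $z \in X_j$, then because internal vertices keep their degree under the gluing, $\deg_{G_j}(z) = \deg_G(z) \ge \deg_G(u) \ge \deg_{G_j}(u_j)$, so $z$ already strongly dominates $u_j$ inside $G_j$; symmetrically $z'$ strongly dominates $v_{j'}$ inside the factor containing it.

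For the construction I would handle the ``$u$-side'' and the ``$v$-side'' of each $G_i$ separately, reusing the strong/weak dichotomy: to take care of $u_i$ together with every vertex of $X_i$ that $u$ was responsible for in $G$, I add the single vertex $u_i$ when $u_i$ is strong and the whole set $X_i$ when $u_i$ is weak (so that the maximum-degree neighbour of $u_i$ is put into $S_i$), and symmetrically I add $v_i$ or $Y_i$ on the $v$-side. The point of the case split is purely to bound $|S_1| + |S_2|$, and the decisive gain comes from the internal dominators: when the set $X_j$ we add for a weak $u_j$ contains the dominator $z \in S \cap V_j$, it contributes at most $|X_j| - 1$ genuinely new vertices, and symmetrically $z'$ saves a vertex on the $v$-side. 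Writing $\Psi_{1,2} = 4 + \sum_i |X_i| + \sum_i |Y_i|$ as in Equation~(\ref{Eq:3}), these economies are meant to convert the naive estimate $|S_1| + |S_2| \le |S| + \sum_i |X_i| + \sum_i |Y_i|$ into $|S_1| + |S_2| \le |S| + \Psi_{1,2} - 5$, which rearranges to $\gst(G) = |S| \ge \gst(G_1) + \gst(G_2) + 5 - \Psi_{1,2} = \Phi_{1,2}$.

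The hard part will be the boundary configurations in which some $X_i$ or $Y_i$ is empty, that is, in which a gluing vertex is a leaf of one of the factors. There the move ``add $u_i$'' spends a vertex while contributing nothing to the budget $\sum_i |X_i|$, and at the same time $u_i$ can no longer be reached through $X_i$, so its domination inside $G_i$ must be rerouted through $v_i$; keeping both the validity of $S_i$ and the counting under control simultaneously is exactly where the hypothesis that $G_1$ and $G_2$ have order at least~$3$ (equivalently $|X_i| + |Y_i| \ge 1$ for each $i$, Inequality~(\ref{Eq:3b})) must be pushed hardest, and it is the most delicate point of the argument. I would therefore split the proof into subclaims according to how many of $u_1,u_2,v_1,v_2$ are strong, exactly as in Claim~\ref{c:claim1}, checking in each that the constructed $S_i$ is a genuine strong dominating set of $G_i$ -- the only nonroutine verification being that a vertex of $X_i$ (resp.\ $Y_i$) strongly dominated by $u$ (resp.\ $v$) in $G$ survives the degree drop at the gluing vertex -- and then reading off the count to obtain $\gst(G) \ge \Phi_{1,2}$ in every case.
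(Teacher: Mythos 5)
Your route is genuinely different from the paper's: the paper disposes of this case in two lines by setting $S_1 = S \cap V_1$ and $S_2 = (S\cap V_2)\cup\{u_2\}$, asserting that these are strong dominating sets of $G_1$ and $G_2$, and counting $|S_1|+|S_2| = |S|+1$, which suffices because $\Psi_{1,2}\ge 6$. You instead replay the strong/weak case analysis of Claim~\ref{c:claim1}. The difficulty is that your counting does not close. When all four of $u_1,v_1,u_2,v_2$ are strong, your recipe adds up to four vertices against a budget of $\Psi_{1,2}-5$, which Inequality~(\ref{Eq:3b}) only guarantees to be at least $1$; the two ``economies'' you extract from the internal dominators $z\in S\cap(X_1\cup X_2)$ of $u$ and $z'\in S\cap(Y_1\cup Y_2)$ of $v$ recover at most two of these, and only when the relevant gluing vertex is weak. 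Even the sharper variant (omit $u_j$ from $S_j$ when $z\in X_j$, omit $v_{j'}$ when $z'\in Y_{j'}$) still costs $2$ in the leaf configurations you yourself flag as ``the most delicate point,'' whereas for $\Psi_{1,2}=6$ the required inequality is $|S_1|+|S_2|\le |S|+1$; you are short by one exactly there.

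That gap cannot be filled, because in that configuration the claim (and hence Theorem~\ref{thm:2-gluing-lower}) fails. Let $G_1$ be the path $w\,z\,u_1\,v_1$ and $G_2$ the path $u_2\,v_2\,z'\,w'$, and glue $u_1v_1$ to $u_2v_2$, so that $G=G_1\cup_{K_2}G_2$ is the path $w\,z\,u\,v\,z'\,w'$. Then $\gst(G_1)=\gst(G_2)=2$ and $\Psi_{1,2}=2+1+1+2=6$, so $\Phi_{1,2}=2+2+5-6=3$; but $S=\{z,z'\}$ is a strong dominating set of $G$ (every vertex of $\overline{S}$ has degree at most $2=\deg_G(z)=\deg_G(z')$ and is adjacent to $z$ or $z'$), so $\gst(G)=2<\Phi_{1,2}$, and this $S$ satisfies the hypothesis $u,v\notin S$ of the claim. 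The same example exposes the flaw in the paper's own argument: $S\cap V_1=\{z\}$ is not a strong dominating set of $G_1$, since the leaf $v_1$ has no neighbour in it. So neither your sketch nor the published proof establishes this case; a correct statement needs either an additional hypothesis excluding gluing vertices that are leaves of a factor, or a weaker constant.
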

\proof Suppose that neither $u$ nor $v$ belong to~$S$.  In this case we let $S_1 = S \cap V_1$ and we let $S_2 = (S \cap V_2) \cup \{u_2\}$. The set $S_i$ is a strong dominating set of $G_i$ for $i \in [2]$. Thus, by Equation~(\ref{Eq:2}) we have $\Phi_{1,2} + \Psi_{1,2} - 5 = \gst(G_1) + \gst(G_2)  \le |S_1| + |S_2| = |S| + 1 = \gst(G) + 1$, or, equivalently, $\gst(G) \ge \Phi_{1,2} + \Psi_{1,2} - 6 \ge \Phi_{1,2}$, as desired.~\smallqed

\medskip
By Claims~\ref{c:claim1},~\ref{c:claim2}, and~\ref{c:claim3} we have $\gst(G) \ge \Phi_{1,2}$. This completes the proof of Theorem~\ref{thm:2-gluing-lower}.~\smallqed

\medskip
We remark that the lower bound in Theorem~\ref{thm:2-gluing-lower} is best possible. As a simple example, let $G_1$ and $G_2$ be the two graphs shown in Figure~\ref{f:example}(a) and~\ref{f:example}(b), respectively. Let $G = G_1 \cup_{K_2} G_2$, and let $uv$ be the edge in $G$ associated with the gluing of the edges  $u_1v_1\in E(G_1)$ and $u_2v_2\in E(G_2)$, where $u$ is the vertex resulting from gluing the vertices $u_1$ and $u_2$, and where $v$ is the vertex resulting from gluing the vertices $v_1$ and $v_2$. The graph $G$ is shown in Figure~\ref{f:example}(c). In this example, we have $\gst(G_1) = 1$,  $\gst(G_2) = 2$, and $\gst(G) = 2$, where the black vertices are $\gst$-sets in $G_1$, $G_2$ and $G$, respectively. In this example,
\[
\Psi_{1,2} = \sum_{i=1}^2 ( \deg_{G_i}(u_i) + \deg_{G_i}(v_i)) = 6.
\]
Thus, the lower bound in the statement of Theorem~\ref{thm:2-gluing-lower} is
\[
\Phi_{1,2} = \gst(G_1) + \gst(G_2) + 5 - \Psi_{1,2} = 1 + 2 + 5 - 7 = 2.
\]
As observed earlier, $\gst(G) = 2$. Therefore, $\gst(G) = \Phi_{1,2}$, that is, the graph $G$ achieves the lower bound in Theorem~\ref{thm:2-gluing-lower}.

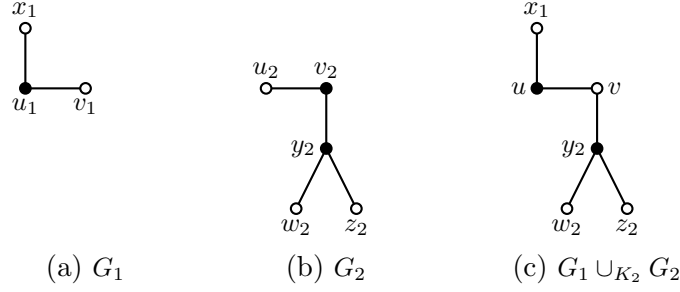
\begin{figure}[htb]
\begin{center}
\begin{tikzpicture}[scale=.8,style=thick,x=1cm,y=1cm]
\def\vr{2.5pt} 
\path (0,2) coordinate (u1);
\path (0,3) coordinate (x1);
\path (1,2) coordinate (v1);
\path (4,2) coordinate (u2);
\path (5,2) coordinate (v2);
\path (5,1) coordinate (y2);
\path (4.5,0) coordinate (w2);
\path (5.5,0) coordinate (z2);
\path (8.5,3) coordinate (x1n);
\path (8.5,2) coordinate (u);
\path (9.5,2) coordinate (v);
\path (9.5,1) coordinate (y2n);
\path (9,0) coordinate (w2n);
\path (10,0) coordinate (z2n);
\draw (x1)--(u1)--(v1);
\draw (u2)--(v2)--(y2)--(w2);
\draw (y2)--(z2);
\draw (x1n)--(u)--(v)--(y2n)--(w2n);
\draw (y2n)--(z2n);
\draw (x1) [fill=white] circle (\vr);
\draw (u1) [fill=black] circle (\vr);
\draw (v1) [fill=white] circle (\vr);
\draw (u2) [fill=white] circle (\vr);
\draw (v2) [fill=black] circle (\vr);
\draw (y2) [fill=black] circle (\vr);
\draw (w2) [fill=white] circle (\vr);
\draw (z2) [fill=white] circle (\vr);
\draw (x1n) [fill=white] circle (\vr);
\draw (u) [fill=black] circle (\vr);
\draw (v) [fill=white] circle (\vr);
\draw (y2n) [fill=black] circle (\vr);
\draw (w2n) [fill=white] circle (\vr);
\draw (z2n) [fill=white] circle (\vr);
\draw (1,-1) node {(a) {\small $G_1$}};
\draw (5,-1) node {(b) {\small $G_2$}};
\draw (9.5,-1) node {(c) {\small $G_1 \cup_{K_2} G_2$}};
\draw[anchor = south] (x1) node {{\small $x_1$}};
\draw[anchor = north] (u1) node {{\small $u_1$}};
\draw[anchor = north] (v1) node {{\small $v_1$}};
\draw[anchor = south] (u2) node {{\small $u_2$}};
\draw[anchor = south] (v2) node {{\small $v_2$}};
\draw[anchor = east] (y2) node {{\small $y_2$}};
\draw[anchor = north] (w2) node {{\small $w_2$}};
\draw[anchor = north] (z2) node {{\small $z_2$}};
\draw[anchor = south] (x1n) node {{\small $x_1$}};
\draw[anchor = east] (u) node {{\small $u$}};
\draw[anchor = west] (v) node {{\small $v$}};
\draw[anchor = east] (y2n) node {{\small $y_2$}};
\draw[anchor = north] (w2n) node {{\small $w_2$}};
\draw[anchor = north] (z2n) node {{\small $z_2$}};
\end{tikzpicture}
\end{center}
\vskip -0.5cm
\caption{A graph achieving the lower bound in Theorem~\ref{thm:2-gluing-lower}} \label{f:example}
\end{figure}

We remark that Theorem~\ref{thm:2-gluing-lower} can be strengthened slightly if we impose the restriction that $\Psi_{1,2} \ge 7$. The proof is almost identical to that of Theorem~\ref{thm:2-gluing-lower}, and is therefore omitted.

\begin{theorem}
\label{thm:2-gluing-lower2}
For the edge gluing of connected graphs $G_1$ and $G_2$ both of order at least~$3$, with gluing edges $u_1v_1\in E(G_1)$ and $u_2v_2\in E(G_2)$ and with
\[
\Psi_{1,2} = \sum_{i=1}^2 ( \deg_{G_i}(u_i) + \deg_{G_i}(v_i)) \ge 7,
\]
we have
\[
\gst(G_1\cup_{K_2}G_2) \ge \gst(G_1)+\gst(G_2) + 6 - \Psi_{1,2}.
\]
\end{theorem}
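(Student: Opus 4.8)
The plan is to re-run the entire case analysis of Theorem~\ref{thm:2-gluing-lower} essentially verbatim, merely replacing the target quantity $\Phi_{1,2}=\gst(G_1)+\gst(G_2)+5-\Psi_{1,2}$ by $\Phi_{1,2}'=\gst(G_1)+\gst(G_2)+6-\Psi_{1,2}=\Phi_{1,2}+1$, and spending the sharper hypothesis $\Psi_{1,2}\ge 7$ wherever the original proof only had the inequality $\Psi_{1,2}\ge 6$ available for free. Concretely, I would again choose a $\gst$-set $S$ of $G=G_1\cup_{K_2}G_2$ minimizing $|S\cap\{u,v\}|$, classify each gluing endpoint $u_1,v_1,u_2,v_2$ as strong or weak exactly as before (with $X_i=N_{G_i}(u_i)\setminus\{v_i\}$ and $Y_i=N_{G_i}(v_i)\setminus\{u_i\}$), and split into the three cases $\{u,v\}\subseteq S$ (Claim~\ref{c:claim1}), $|S\cap\{u,v\}|=1$ (Claim~\ref{c:claim2}), and $S\cap\{u,v\}=\emptyset$ (Claim~\ref{c:claim3}).

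First I would dispatch the two easy cases. In Claim~\ref{c:claim2} and Claim~\ref{c:claim3} the original computation already yields $\gst(G)\ge\Phi_{1,2}+\Psi_{1,2}-6$ in the borderline subcases (both endpoints strong in Claim~\ref{c:claim2.1}, and the neither-endpoint case of Claim~\ref{c:claim3}), while every other subcase there produced a strictly larger constant. Feeding in $\Psi_{1,2}\ge 7$ in place of $\Psi_{1,2}\ge 6$ converts each of these bounds into $\gst(G)\ge\Phi_{1,2}+1=\Phi_{1,2}'$. This is precisely where the new hypothesis is spent: the extremal graph of Figure~\ref{f:example} sits in Claim~\ref{c:claim2.1} with $\Psi_{1,2}=6$, so raising the hypothesis to $\Psi_{1,2}\ge 7$ is exactly what is needed to push the estimate up by one in these two cases.

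The substance of the argument is Claim~\ref{c:claim1}. There Claim~\ref{c:claim1.1} still gives $\Psi_{1,2}\ge 8$ for free, so the subcases producing a bound of the shape $\gst(G)\ge\Phi_{1,2}+\Psi_{1,2}-7$ (all four endpoints strong, Claim~\ref{c:claim1.2}) or a constant of $6$ or $7$ (Claim~\ref{c:claim1.4} with $u_1,v_1$ or $u_1,u_2$ strong, and Claim~\ref{c:claim1.6}) already clear $\Phi_{1,2}'$ with no change. The genuinely new work is confined to the few subcases that were tight at $\Phi_{1,2}$ in the original: Claim~\ref{c:claim1.3} (exactly three strong endpoints), the diagonal configuration of Claim~\ref{c:claim1.4} with $u_1,v_2$ strong and $|X_1|+|Y_2|=1$, and Claim~\ref{c:claim1.5} with $|X_1|=0$. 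In each of these the tightness of the estimate forces a gluing endpoint to have degree one in its factor (an empty $X_i$ or $Y_i$), and the refinement is to prune that leaf-like vertex from the constructed set $S_i$: when a strong endpoint has no private neighbor it is already strongly dominated by its unique neighbor placed in $S_i$, and, dually, any neighbor whose degree exceeds $\deg_G(v)$ (respectively $\deg_G(u)$) is strongly dominated in $G$ not by $v$ (respectively $u$) but by a vertex of $S\cap V_i$, so it need not be inserted into $S_i$ at all. Reallocating the strong-domination duties in this way saves one vertex and upgrades each tight estimate from constant $5$ to constant $6$, i.e.\ from $\Phi_{1,2}$ to $\Phi_{1,2}'$.

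The hard part will be the careful verification in Claim~\ref{c:claim1.5} (and its analogue inside the diagonal case of Claim~\ref{c:claim1.4}) that the pruned set is still a strong dominating set of the factor: here the leaf cannot simply be deleted, since its unique neighbor is not in $S_i$, so one must instead split the neighborhood $Y_1$ by degree and argue that the high-degree neighbors are covered by $S\cap V_1$ while the low-degree ones are covered by a single inserted endpoint, all while keeping the count at $|S|+\Psi_{1,2}-6$. Once these tight subcases are re-established at $\Phi_{1,2}'$, combining them with the unchanged subcases gives $\gst(G)\ge\Phi_{1,2}'$ in Claim~\ref{c:claim1}, and together with the improved Claims~\ref{c:claim2} and~\ref{c:claim3} this completes the proof.
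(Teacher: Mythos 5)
The paper gives no actual proof of this theorem --- it asserts the argument is ``almost identical'' to that of Theorem~\ref{thm:2-gluing-lower} and omits it --- so your proposal has to be judged on whether it closes the argument itself, and it does not. Your diagnosis of where the hypothesis $\Psi_{1,2}\ge 7$ acts is correct: it upgrades Claims~\ref{c:claim2.1} and~\ref{c:claim3} (whose bound is $\Phi_{1,2}+\Psi_{1,2}-6$) from $\Phi_{1,2}$ to $\Phi_{1,2}+1$, whereas inside Claim~\ref{c:claim1} one already has $\Psi_{1,2}\ge 8$, so the new hypothesis buys nothing there and the subcases that were tight at $\Phi_{1,2}$ --- Claim~\ref{c:claim1.3}, the diagonal configuration of Claim~\ref{c:claim1.4} with $|X_1|+|Y_2|=1$, and Claim~\ref{c:claim1.5} with $X_1=\emptyset$ --- genuinely require new constructions (so the paper's ``almost identical'' is itself optimistic). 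But for exactly these subcases you offer only a heuristic and explicitly defer ``the hard part,'' so the proof is incomplete. Moreover, the repair you sketch for Claim~\ref{c:claim1.5} is aimed at the wrong factor: splitting $Y_1$ by degree inside $G_1$ lets an inserted $v_1$ cover only the vertices $y$ with $\deg_{G_1}(y)\le \deg_{G_1}(v_1)=|Y_1|+1$, while $S\cap V_1$ is only guaranteed to cover those with $\deg_{G}(y)>\deg_G(v)=|Y_1|+|Y_2|+1$; a vertex of $Y_1$ with degree strictly between these two thresholds (which differ by $|Y_2|\ge 1$) may be strongly dominated in $G$ only by $v$ and is then covered by neither mechanism, and in the worst case all of $Y_1$ is of this kind, so nothing is saved.

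The vertex must instead be saved on the $G_2$ side: since $X_1=\emptyset$ we have $\deg_G(u)=|X_2|+1=\deg_{G_2}(u_2)$, so every vertex of $X_2$ strongly dominated by $u$ in $G$ is strongly dominated by $u_2$ in $G_2$, and $S_2=(S\cap V_2)\cup\{u_2\}\cup Y_2$ replaces $(S\cap V_2)\cup X_2\cup Y_2$, saving $|X_2|-1\ge 1$ vertices because $|X_2|\ge 2$ by Claim~\ref{c:claim1.1}(a). Analogous degree coincidences do close the other two tight subcases along the lines you indicate: in Claim~\ref{c:claim1.3} tightness forces $|Y_1|=|X_2|=0$ and $|Y_2|=2$, so $u_2$ is a leaf strongly dominated by its unique neighbour $v_2\in S_2$ and can be dropped; in the diagonal case with $|Y_2|=0$ one has $\deg_G(v)=\deg_{G_1}(v_1)$, so $Y_1$ (of size at least $2$) can be replaced by $\{v_1\}$. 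The theorem therefore appears to be true and provable in the spirit of your plan, but as written the proposal neither carries out these verifications nor, in the case of Claim~\ref{c:claim1.5}, identifies the construction that actually yields the saving.
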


The result in Theorem~\ref{thm:2-gluing-upper} on the $K_2$-gluing of two graphs can be extended to $K_r$-gluing of two graphs for all $r \ge 2$.

\begin{theorem}
\label{thm:2-gluing-upper-Kr}
For $r \ge 2$, let $G_1$ and $G_2$ be connected graphs of order at least~$r+1$ that both contain $r$-cliques. For the $r$-gluing of $G_1$ and $G_2$, we have
\[
\gst(G_1 \cup_{K_r} G_2) \le \gst(G_1) + \gst(G_2) + 1.
\]
\end{theorem}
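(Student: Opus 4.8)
The plan is to mimic the construction in the proof of Theorem~\ref{thm:2-gluing-upper}, but to exploit the fact that the glued $r$-clique is a complete subgraph of $G$. Write $Q = \{w_1,\ldots,w_r\}$ for the $r$-clique of $G$ obtained by identifying an $r$-clique $Q_1 = \{w_1^{(1)},\ldots,w_r^{(1)}\}$ of $G_1$ with an $r$-clique $Q_2 = \{w_1^{(2)},\ldots,w_r^{(2)}\}$ of $G_2$, where $w_j$ results from identifying $w_j^{(1)}$ and $w_j^{(2)}$. The first step is to record the relevant degree facts. Every vertex $x \in V(G_i) \setminus Q_i$ keeps exactly its $G_i$-neighbors in $G$, so $\deg_G(x) = \deg_{G_i}(x)$; whereas each clique vertex satisfies $\deg_G(w_j) = \deg_{G_1}(w_j^{(1)}) + \deg_{G_2}(w_j^{(2)}) - (r-1)$, and since each $w_j^{(i)}$ lies in an $r$-clique of $G_i$ we have $\deg_{G_i}(w_j^{(i)}) \ge r-1$, whence $\deg_G(w_j) \ge \max\{\deg_{G_1}(w_j^{(1)}),\deg_{G_2}(w_j^{(2)})\}$. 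The key point driving the whole argument is that passing from $G_i$ to $G$ never decreases the degree of any vertex.

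Next I would take $\gst$-sets $D_1$ and $D_2$ of $G_1$ and $G_2$ and form the candidate set $D_0$ obtained by keeping every non-clique vertex of $D_1$ and of $D_2$ and, for each index $j$, including the vertex $w_j$ precisely when $w_j^{(1)} \in D_1$ or $w_j^{(2)} \in D_2$. Writing $a = |D_1 \cap Q_1|$, $b = |D_2 \cap Q_2|$, and $c$ for the number of indices $j$ with both $w_j^{(1)} \in D_1$ and $w_j^{(2)} \in D_2$, a direct count gives $|D_0| = (|D_1| - a) + (|D_2| - b) + (a + b - c) = |D_1| + |D_2| - c$, so in particular $|D_0| \le |D_1| + |D_2|$.

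The one thing that can fail for $D_0$ is that a clique vertex $w_j \notin D_0$ need not be strongly dominated: its degree in $G$ is in general strictly larger than in either $G_i$, so its former dominator in $D_1$ or $D_2$ may no longer have large enough degree. This is the main obstacle, and it is resolved cleanly using that $Q$ is complete. Let $w_{j^*}$ be a clique vertex of maximum degree in $G$ and set $D = D_0 \cup \{w_{j^*}\}$, so that $|D| \le |D_1| + |D_2| + 1$. Every clique vertex $w_j \notin D$ is then adjacent to $w_{j^*}$ with $\deg_G(w_j) \le \deg_G(w_{j^*})$, and so is strongly dominated by $w_{j^*}$. For a non-clique vertex $x$ of $G_i$, it was strongly dominated in $G_i$ by some $y \in D_i$; the image of $y$ (namely $y$ itself, or $w_j$ if $y = w_j^{(i)}$) lies in $D$, remains adjacent to $x$, and satisfies $\deg_G(x) = \deg_{G_i}(x) \le \deg_{G_i}(y) \le \deg_G(\text{image of }y)$ by the degree-monotonicity observation, so $x$ is strongly dominated in $G$.

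Hence $D$ is a strong dominating set of $G$ and $\gst(G) \le |D| \le \gst(G_1) + \gst(G_2) + 1$, completing the proof. I expect essentially no further difficulty beyond what is sketched: the entire content is the degree-monotonicity fact together with the observation that a single maximum-degree vertex of the complete clique $Q$ handles all remaining clique vertices at a cost of at most one extra vertex, which is exactly the role played by the higher-degree gluing vertex in the $K_2$ case of Theorem~\ref{thm:2-gluing-upper}.
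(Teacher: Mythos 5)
Your proposal is correct and follows essentially the same route as the paper's proof: project $D_1$ and $D_2$ onto $G$ (keeping a glued clique vertex whenever either preimage was in its $\gst$-set), observe via the gluing that degrees never decrease so non-clique vertices stay strongly dominated, and add a single maximum-degree clique vertex to strongly dominate the remaining clique vertices at a cost of at most one. The only cosmetic difference is that the paper splits into the cases $A = R$ and $A \subset R$ and chooses the extra vertex of maximum degree within $R \setminus A$, whereas you always add a maximum-degree vertex of the whole clique; both yield the stated bound.
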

\proof
Let $G = G_1 \cup_{K_r} G_2$, and let $R_1$ and $R_2$ be the two $r$-cliques chosen in $G_1$ and $G_2$, respectively, that are identified in the $r$-gluing of $G_1$ and $G_2$ to produce the $r$-clique $R$ in $G$. Let $R_i = \{v_{i,1},v_{i,2}, \ldots, v_{i,r}\}$ for $i \in [2]$, and let $R = \{v_1,v_2,\ldots,v_r\}$, where $v_j$ is the vertex resulting from the gluing of the vertices $v_{1,j}$ and $v_{2,j}$ for $j \in [r]$. By the edge gluing operation, we note that $\deg_G(v_j) \ge \max \{ \deg_{G_1}(v_{1,j}),\deg_{G_2}(v_{2,j})\}$ for $j \in [r]$. Let $D_1$ and $D_2$ be $\gst$-sets of $G_1$ and $G_2$, respectively. Let $A_i = D_i \cap R_i$ for $i \in [2]$, and so $A_i$ is the set of vertices in $D_i$ that belong to the $r$-clique $R_i$ in the graph $G_i$. Let
\[
A = \{ v_j \in Q \colon v_{1,j} \in D_1 \mbox{ or } v_{2,j} \in D_1 \mbox{ where } j \in [r] \}.
\]

We note that $|A_1| + |A_2| \ge |A|$. Suppose that $A = R$, and so $|A| = r$ and $|A_1| + |A_2| \ge r$. In this case, we let
\[
D = \Big( D_1 \cup D_2 \cup A \Big) \setminus (A_1 \cup A_2).
\]

The resulting set $D$ is a strong dominating set of $G$, and so $\gst(G) \le |D| = |D_1| + |D_2| + |A| - (|A_1| + |A_2|) \le \gst(G_1) + \gst(G_2)$. Suppose next that $A \subset R$. Let $B = R \setminus A$. Among all vertices that belong to the set $B$ we choose a vertex $v$ of maximum degree in $G$, and we let
\[
D = \Big( D_1 \cup D_2 \cup A \cup \{v\} \Big) \setminus (A_1 \cup A_2).
\]

The resulting set $D$ is a strong dominating set of $G$, noting that the vertex $v$ strongly dominates all vertices the vertices that belong to the set $B$ in the graph $G$. Therefore, $\gst(G) \le |D| = |D_1| + |D_2| + |A| + 1 - (|A_1| + |A_2|) \le \gst(G_1) + \gst(G_2) + 1$.~\QED

\medskip
We pose the following question that we have yet to settle which, if true, extends the result in Theorem~\ref{thm:2-gluing-lower} on the $K_2$-gluing of two graphs to $K_r$-gluing of two graphs for all $r \ge 2$

\begin{question}
\label{Q:2-gluing-lower-Kr}
For $r \ge 2$, let $G_1$ and $G_2$ be connected graphs of order at least~$r+1$ that both contain $r$-cliques. Is it true that for the $r$-gluing of $G_1$ and $G_2$ with $r$-gluing cliques $U_1 \subset V(G_1)$ and $U_2 \subset V(G_2)$, we have
\begin{equation}
\label{Eq:r}
\gst(G_1\cup_{K_r}G_2) \ge \gst(G_1)+\gst(G_2) + 2r^2 - 2r + 1 - \Psi_1^r - \Psi_2^r,
\end{equation}
where
\[
\Psi_{i}^r = \sum_{u \in U_i} \deg_{G_i}(u_i)
\]
for all $i \in [r]$.
\end{question}

We remark that if Question~\ref{Q:2-gluing-lower-Kr} is true, then the lower bound in Inequality~(\ref{Eq:r}) in the statement of the question is best possible. As a simple example, let $r \ge 2$ and let $G_1$ be obtained from a complete graph $K_r$ with vertex set $U_1$ by adding a new vertex $x_1$ and adding an edge joining $x_1$ to exactly one vertex $u_1$ in the complete graph. Further, let $G_2$ be obtained from a complete graph $K_r$ with vertex set $U_2$ by adding a path $w_2y_2z_2$ and an edge joining $y_2$ to exactly one vertex $v_2$ in the complete graph. In the special case when $r=3$, the graphs $G_1$ and $G_2$ are shown in Figure~\ref{f:example3}(a) and~\ref{f:example3}(b), respectively. Let $G = G_1 \cup_{K_r} G_2$, where the $r$-gluing of $G_1$ and $G_2$ is with $r$-gluing cliques $U_1$ and $U_2$. In this example, we have $\gst(G_1) = 1$,  $\gst(G_2) = 2$, $\gst(G) = 2$, and
\[
\Psi_{i}^r = \sum_{u \in U_i} \deg_{G_i}(u_i) = r^2 - r + 1
\]
for $i \in [2]$. Thus, the lower bound in the statement of Question~\ref{Q:2-gluing-lower-Kr} is
\[
\gst(G_1) + \gst(G_2) + 2r^2 - 2r + 1 - \Psi_1^r - \Psi_2^r = 1 + 2 + 2r^2 - 2r + 1 - 2(r^2 - r + 1) = 2.
\]
As observed earlier, $\gst(G) = 2$. Therefore, the graph $G$ achieves equality in Inequality~(\ref{Eq:r}).

\begin{figure}[htb]
\begin{center}
\begin{tikzpicture}[scale=.8,style=thick,x=1cm,y=1cm]
\def\vr{2.5pt} 
\path (0,2) coordinate (u1);
\path (0,3) coordinate (x1);
\path (1,2) coordinate (w1);
\path (2,2) coordinate (v1);
\path (4,2) coordinate (u2);
\path (5,2) coordinate (n2);
\path (6,2) coordinate (v2);
\path (6,1) coordinate (y2);
\path (5.5,0) coordinate (w2);
\path (6.5,0) coordinate (z2);
\path (8.5,3) coordinate (x1n);
\path (8.5,2) coordinate (u);
\path (9.5,2) coordinate (n);
\path (10.5,2) coordinate (v);
\path (10.5,1) coordinate (y2n);
\path (10,0) coordinate (w2n);
\path (11,0) coordinate (z2n);
\draw (x1)--(u1)--(w1)--(v1);
\draw (v2)--(y2)--(w2);
\draw (u2)--(n2)--(v2);
\draw (y2)--(z2);
\draw (x1n)--(u)--(v)--(y2n)--(w2n);
\draw (y2n)--(z2n);

\draw (u1) to[out=270,in=270, distance=0.75cm] (v1);
\draw (u2) to[out=270,in=270, distance=0.75cm] (v2);
\draw (u) to[out=270,in=270, distance=0.75cm] (v);
\draw (x1) [fill=white] circle (\vr);
\draw (u1) [fill=black] circle (\vr);
\draw (v1) [fill=white] circle (\vr);
\draw (w1) [fill=white] circle (\vr);
\draw (u2) [fill=white] circle (\vr);
\draw (n2) [fill=white] circle (\vr);
\draw (v2) [fill=black] circle (\vr);
\draw (y2) [fill=black] circle (\vr);
\draw (w2) [fill=white] circle (\vr);
\draw (z2) [fill=white] circle (\vr);
\draw (x1n) [fill=white] circle (\vr);
\draw (u) [fill=black] circle (\vr);
\draw (v) [fill=white] circle (\vr);
\draw (n) [fill=white] circle (\vr);
\draw (y2n) [fill=black] circle (\vr);
\draw (w2n) [fill=white] circle (\vr);
\draw (z2n) [fill=white] circle (\vr);
\draw (1,-1) node {(a) {\small $G_1$}};
\draw (5,-1) node {(b) {\small $G_2$}};
\draw (9.5,-1) node {(c) {\small $G_1 \cup_{K_2} G_2$}};
\draw[anchor = south] (x1) node {{\small $x_1$}};
\draw[anchor = east] (u1) node {{\small $u_1$}};
\draw[anchor = west] (v1) node {{\small $v_1$}};
\draw[anchor = south] (u2) node {{\small $u_2$}};
\draw[anchor = south] (v2) node {{\small $v_2$}};
\draw[anchor = east] (y2) node {{\small $y_2$}};
\draw[anchor = north] (w2) node {{\small $w_2$}};
\draw[anchor = north] (z2) node {{\small $z_2$}};
\draw[anchor = south] (x1n) node {{\small $x_1$}};
\draw[anchor = east] (u) node {{\small $u$}};
\draw[anchor = west] (v) node {{\small $v$}};
\draw[anchor = east] (y2n) node {{\small $y_2$}};
\draw[anchor = north] (w2n) node {{\small $w_2$}};
\draw[anchor = north] (z2n) node {{\small $z_2$}};
\end{tikzpicture}
\end{center}
\vskip -0.5cm
\caption{A graph achieving equality in Inequality~(\ref{Eq:r})} \label{f:example3}
\end{figure}
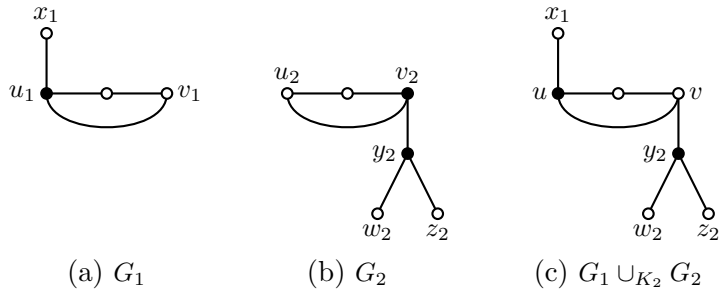


\section{Chain of Graphs}

In this section, we study the strong domination number of a chain of graphs.

	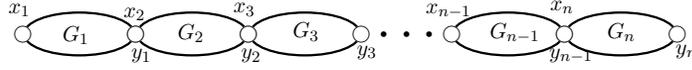
\begin{figure}
		\begin{center}
			\psscalebox{0.75 0.75}
			{
				\begin{pspicture}(0,-3.9483333)(12.236668,-2.8316667)
				\psellipse[linecolor=black, linewidth=0.04, dimen=outer](1.2533334,-3.4416668)(1.0,0.4)
				\psellipse[linecolor=black, linewidth=0.04, dimen=outer](3.2533333,-3.4416668)(1.0,0.4)
				\psellipse[linecolor=black, linewidth=0.04, dimen=outer](5.2533336,-3.4416668)(1.0,0.4)
				\psellipse[linecolor=black, linewidth=0.04, dimen=outer](8.853333,-3.4416668)(1.0,0.4)
				\psellipse[linecolor=black, linewidth=0.04, dimen=outer](10.853333,-3.4416668)(1.0,0.4)
				\psdots[linecolor=black, fillstyle=solid, dotstyle=o, dotsize=0.3, fillcolor=white](2.2533333,-3.4416666)
				\psdots[linecolor=black, fillstyle=solid, dotstyle=o, dotsize=0.3, fillcolor=white](0.25333345,-3.4416666)
				\psdots[linecolor=black, fillstyle=solid, dotstyle=o, dotsize=0.3, fillcolor=white](2.2533333,-3.4416666)
				\psdots[linecolor=black, fillstyle=solid, dotstyle=o, dotsize=0.3, fillcolor=white](4.2533336,-3.4416666)
				\psdots[linecolor=black, fillstyle=solid, dotstyle=o, dotsize=0.3, fillcolor=white](4.2533336,-3.4416666)
				\psdots[linecolor=black, fillstyle=solid, dotstyle=o, dotsize=0.3, fillcolor=white](9.853333,-3.4416666)
				\psdots[linecolor=black, fillstyle=solid, dotstyle=o, dotsize=0.3, fillcolor=white](9.853333,-3.4416666)
				\psdots[linecolor=black, fillstyle=solid, dotstyle=o, dotsize=0.3, fillcolor=white](11.853333,-3.4416666)
				\rput[bl](0.0,-3.135){$x_1$}
				\rput[bl](2.0400002,-3.2016668){$x_2$}
				\rput[bl](3.9866667,-3.1216667){$x_3$}
				\rput[bl](2.1733334,-3.9483335){$y_1$}
				\rput[bl](4.12,-3.9483335){$y_2$}
				\rput[bl](6.1733336,-3.8816667){$y_3$}
				\rput[bl](0.9600001,-3.6283333){$G_1$}
				\rput[bl](3.0,-3.5883334){$G_2$}
				\rput[bl](5.04,-3.5616667){$G_3$}
				\psdots[linecolor=black, fillstyle=solid, dotstyle=o, dotsize=0.3, fillcolor=white](6.2533336,-3.4416666)
				\psdots[linecolor=black, fillstyle=solid, dotstyle=o, dotsize=0.3, fillcolor=white](7.8533335,-3.4416666)
				\psdots[linecolor=black, dotsize=0.1](6.6533337,-3.4416666)
				\psdots[linecolor=black, dotsize=0.1](7.0533333,-3.4416666)
				\psdots[linecolor=black, dotsize=0.1](7.4533334,-3.4416666)
				\rput[bl](9.6,-3.0816667){$x_n$}
				\rput[bl](11.826667,-3.8683333){$y_n$}
				\rput[bl](9.586667,-3.9483335){$y_{n-1}$}
				\rput[bl](8.533334,-3.6016667){$G_{n-1}$}
				\rput[bl](7.4,-3.1616666){$x_{n-1}$}
				\rput[bl](10.613334,-3.575){$G_n$}
				\end{pspicture}
			}
		\end{center}
		\caption{Chain of $n$ graphs $G_1,G_2, \ldots , G_n$} \label{chain-n}
	\end{figure}

	\begin{theorem} \label{thm:chain}
		Let $G_1,G_2, \ldots , G_n$ be a finite sequence of pairwise disjoint connected graphs and let
		$x_i,y_i \in V(G_i)$. If $C(G_1,...,G_n)$ is the chain of graphs $\{G_i\}_{i=1}^n$ with respect to the vertices $\{x_i, y_i\}_{i=1}^n$ obtained by identifying the vertex $y_i$ with the vertex $x_{i+1}$ for $i=1,2,\ldots,n-1$ (Figure \ref{chain-n}), then
		\begin{align*}
		\left( \sum_{i=1}^{n}\gst(G_i)\right) -\left(\sum_{i=2}^{n}\deg(x_i)\right)-\left(\sum_{i=1}^{n-1}\deg(y_i)\right) +n-1 &\leq  \gst(C(G_1,...,G_n)) \\
&\leq   \left( \sum_{i=1}^{n}\gst(G_i)\right) +n-1.
		\end{align*}
	\end{theorem}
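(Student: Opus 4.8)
The plan is to induct on the number $n$ of building blocks, peeling off (equivalently, building up) one block at a time from the end of the chain and invoking Theorem~\ref{thm:1-gluing} on vertex gluing at each step. For $1 \le k \le n$, write $C_k = C(G_1,\ldots,G_k)$ for the chain of the first $k$ graphs. The key structural observation is that $C_{k+1}$ is exactly the vertex gluing $C_k \cup_{K_1} G_{k+1}$, where the gluing identifies the terminal vertex $y_k$ of $C_k$ with the vertex $x_{k+1}$ of $G_{k+1}$. Crucially, $y_k$ is the free endpoint of $C_k$: it lies only in the last block $G_k$ and has not participated in any earlier identification, so $\deg_{C_k}(y_k) = \deg_{G_k}(y_k) = \deg(y_k)$. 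This identity is what will make the telescoping close up cleanly, and establishing it is the only delicate point in the argument.

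For the upper bound, let $U_k = \left(\sum_{i=1}^{k}\gst(G_i)\right) + (k-1)$. The base case $k=1$ is immediate since $\gst(C_1) = \gst(G_1) = U_1$. For the inductive step I would apply the upper bound of Theorem~\ref{thm:1-gluing} to the gluing $C_k \cup_{K_1} G_{k+1}$, obtaining $\gst(C_{k+1}) \le \gst(C_k) + \gst(G_{k+1}) + 1$. Combining this with the inductive hypothesis $\gst(C_k) \le U_k$ gives $\gst(C_{k+1}) \le U_k + \gst(G_{k+1}) + 1 = U_{k+1}$, which is exactly the claimed upper bound for $n=k+1$. No degree information is needed here, so this direction is routine.

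For the lower bound, let $L_k$ denote the claimed lower-bound expression for $C_k$, namely
\[
L_k = \left(\sum_{i=1}^{k}\gst(G_i)\right) - \left(\sum_{i=2}^{k}\deg(x_i)\right) - \left(\sum_{i=1}^{k-1}\deg(y_i)\right) + (k-1).
\]
The base case $k=1$ gives $L_1 = \gst(G_1)$, as required. For the inductive step I would apply the lower bound of Theorem~\ref{thm:1-gluing} to $C_k \cup_{K_1} G_{k+1}$, yielding
\[
\gst(C_{k+1}) \ge \gst(C_k) + \gst(G_{k+1}) - \deg_{C_k}(y_k) - \deg_{G_{k+1}}(x_{k+1}) + 1.
\]
Substituting $\deg_{C_k}(y_k) = \deg(y_k)$ and $\gst(C_k) \ge L_k$, a one-line telescoping computation shows the right-hand side equals $L_{k+1}$, because $L_{k+1} - L_k = \gst(G_{k+1}) - \deg(x_{k+1}) - \deg(y_k) + 1$. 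Hence $\gst(C_{k+1}) \ge L_{k+1}$, completing the induction.

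The main obstacle is precisely the degree bookkeeping in the lower-bound step: the degree appearing in the vertex-gluing bound of Theorem~\ref{thm:1-gluing} is measured in the partial chain $C_k$, not in the individual block $G_k$, whereas the degrees in the statement of Theorem~\ref{thm:chain} are the block degrees. The argument survives only because I build the chain linearly from the left, so that the active gluing vertex $y_k$ is always the terminal vertex of $C_k$ and has therefore never been identified with another vertex, giving $\deg_{C_k}(y_k) = \deg_{G_k}(y_k)$. If one instead tried to split the chain at an interior block and use the general vertex-sum bound of Theorem~\ref{thm:v-sum}, the relevant degree would be taken in the whole chain (picking up contributions from both sides of the gluing vertex), and the telescoping of the degree sums would no longer match the stated bounds.
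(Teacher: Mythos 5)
Your proof is correct and, for the lower bound, is exactly the paper's argument: the paper also obtains it by induction from Theorem~\ref{thm:1-gluing}, and your careful observation that the active gluing vertex $y_k$ is the free endpoint of the partial chain $C_k$ (so $\deg_{C_k}(y_k)=\deg_{G_k}(y_k)$ and the degree sums telescope) is precisely the bookkeeping the paper leaves implicit. The only divergence is in the upper bound, where the paper gives a direct construction --- take $\gst$-sets $D_i$ of the blocks and add all $n-1$ identified vertices $z_1,\ldots,z_{n-1}$ to their union --- whereas you iterate the upper bound of Theorem~\ref{thm:1-gluing}; both are routine and yield the same quantity $\left(\sum_{i=1}^{n}\gst(G_i)\right)+n-1$, so this is a cosmetic rather than substantive difference.
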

\begin{proof}	
First we prove the upper bound.	Suppose that $D_i$ is a $\gst$-set of $G_i$, for $i \in [n]$. Also, suppose that $z_i$ is the identifying of the vertex $y_i$ with the vertex $x_{i+1}$ for $i \in [n-1]$. The set
$$D=\left(\bigcup\limits_{i=1}^{n} D_i \cup \{z_1,z_2,\ldots,z_{n-1}\}\right)\setminus\{x_2, x_3, \ldots x_n, y_1 y_2, \ldots, y_{n-1}\},$$
is a strong dominating set of $C(G_1,\ldots,G_n)$, and we are done. The lower bound is an immediate result of Theorem \ref{thm:1-gluing}, by using induction.
\qed
	\end{proof}

	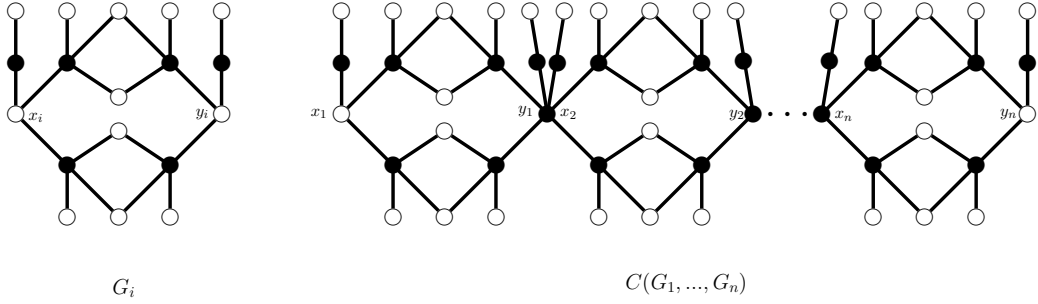
\begin{figure}
		\begin{center}
			\psscalebox{0.57 0.57}
{
\begin{pspicture}(0,-6.439306)(24.002779,0.44208345)
\psline[linecolor=black, linewidth=0.08](0.2013889,-2.1593053)(1.4013889,-0.9593054)(2.601389,0.24069458)(2.601389,0.24069458)
\psline[linecolor=black, linewidth=0.08](2.601389,0.24069458)(3.801389,-0.9593054)(5.001389,-2.1593053)(5.001389,-2.1593053)(5.001389,-2.1593053)
\psline[linecolor=black, linewidth=0.08](5.001389,-2.1593053)(2.601389,-4.559305)(2.601389,-4.559305)
\psline[linecolor=black, linewidth=0.08](2.601389,-4.559305)(0.2013889,-2.1593053)(0.2013889,-2.1593053)
\psline[linecolor=black, linewidth=0.08](1.4013889,-0.9593054)(2.601389,-1.7593055)(2.601389,-1.7593055)
\psline[linecolor=black, linewidth=0.08](2.601389,-1.7593055)(3.801389,-0.9593054)(3.801389,-0.9593054)
\psline[linecolor=black, linewidth=0.08](1.4013889,-3.3593054)(2.601389,-2.5593054)(2.601389,-2.5593054)
\psline[linecolor=black, linewidth=0.08](2.601389,-2.5593054)(3.801389,-3.3593054)(3.801389,-3.3593054)
\psline[linecolor=black, linewidth=0.08](5.001389,-2.1593053)(5.001389,-0.9593054)(5.001389,-0.9593054)
\psline[linecolor=black, linewidth=0.08](5.001389,-0.9593054)(5.001389,0.24069458)(5.001389,0.24069458)
\psline[linecolor=black, linewidth=0.08](0.2013889,-2.1593053)(0.2013889,-0.9593054)(0.2013889,-0.9593054)(0.2013889,-0.9593054)
\psline[linecolor=black, linewidth=0.08](0.2013889,-0.9593054)(0.2013889,0.24069458)(0.2013889,0.24069458)
\psline[linecolor=black, linewidth=0.08](1.4013889,-0.9593054)(1.4013889,0.24069458)(1.4013889,0.24069458)
\psline[linecolor=black, linewidth=0.08](3.801389,-0.9593054)(3.801389,0.24069458)(3.801389,0.24069458)
\psline[linecolor=black, linewidth=0.08](3.801389,-3.3593054)(3.801389,-4.559305)(3.801389,-4.559305)
\psline[linecolor=black, linewidth=0.08](1.4013889,-3.3593054)(1.4013889,-4.559305)(1.4013889,-4.559305)
\psdots[linecolor=black, dotsize=0.4](1.4013889,-0.9593054)
\psdots[linecolor=black, dotsize=0.4](3.801389,-0.9593054)
\psdots[linecolor=black, dotsize=0.4](1.4013889,-3.3593054)
\psdots[linecolor=black, dotsize=0.4](3.801389,-3.3593054)
\psdots[linecolor=black, dotsize=0.4](5.001389,-0.9593054)
\psdots[linecolor=black, dotsize=0.4](0.2013889,-0.9593054)
\psdots[linecolor=black, dotstyle=o, dotsize=0.4, fillcolor=white](0.2013889,0.24069458)
\psdots[linecolor=black, dotstyle=o, dotsize=0.4, fillcolor=white](1.4013889,0.24069458)
\psdots[linecolor=black, dotstyle=o, dotsize=0.4, fillcolor=white](3.801389,0.24069458)
\psdots[linecolor=black, dotstyle=o, dotsize=0.4, fillcolor=white](5.001389,0.24069458)
\psdots[linecolor=black, dotstyle=o, dotsize=0.4, fillcolor=white](2.601389,0.24069458)
\psdots[linecolor=black, dotstyle=o, dotsize=0.4, fillcolor=white](2.601389,-1.7593055)
\psdots[linecolor=black, dotstyle=o, dotsize=0.4, fillcolor=white](2.601389,-2.5593054)
\psdots[linecolor=black, dotstyle=o, dotsize=0.4, fillcolor=white](0.2013889,-2.1593053)
\psdots[linecolor=black, dotstyle=o, dotsize=0.4, fillcolor=white](5.001389,-2.1593053)
\psdots[linecolor=black, dotstyle=o, dotsize=0.4, fillcolor=white](2.601389,-4.559305)
\psdots[linecolor=black, dotstyle=o, dotsize=0.4, fillcolor=white](1.4013889,-4.559305)
\psdots[linecolor=black, dotstyle=o, dotsize=0.4, fillcolor=white](3.801389,-4.559305)
\rput[bl](2.4613888,-6.4393053){\Large{$G_i$}}
\rput[bl](14.421389,-6.3793054){\Large{$C(G_1,...,G_n)$}}
\rput[bl](0.5013889,-2.3593054){$x_i$}
\rput[bl](4.401389,-2.2793055){$y_i$}
\rput[bl](7.081389,-2.2593055){$x_1$}
\rput[bl](11.921389,-2.2193055){$y_1$}
\rput[bl](12.901389,-2.2793055){$x_2$}
\rput[bl](16.841389,-2.2993054){$y_2$}
\rput[bl](19.30139,-2.3193054){$x_n$}
\rput[bl](23.161388,-2.2793055){$y_n$}
\psline[linecolor=black, linewidth=0.08](7.8013887,-2.1593053)(9.001389,-0.9593054)(10.201389,0.24069458)(10.201389,0.24069458)
\psline[linecolor=black, linewidth=0.08](10.201389,0.24069458)(11.401389,-0.9593054)(12.601389,-2.1593053)(12.601389,-2.1593053)(12.601389,-2.1593053)
\psline[linecolor=black, linewidth=0.08](12.601389,-2.1593053)(10.201389,-4.559305)(10.201389,-4.559305)
\psline[linecolor=black, linewidth=0.08](10.201389,-4.559305)(7.8013887,-2.1593053)(7.8013887,-2.1593053)
\psline[linecolor=black, linewidth=0.08](9.001389,-0.9593054)(10.201389,-1.7593055)(10.201389,-1.7593055)
\psline[linecolor=black, linewidth=0.08](10.201389,-1.7593055)(11.401389,-0.9593054)(11.401389,-0.9593054)
\psline[linecolor=black, linewidth=0.08](9.001389,-3.3593054)(10.201389,-2.5593054)(10.201389,-2.5593054)
\psline[linecolor=black, linewidth=0.08](10.201389,-2.5593054)(11.401389,-3.3593054)(11.401389,-3.3593054)
\psline[linecolor=black, linewidth=0.08](9.001389,-0.9593054)(9.001389,0.24069458)(9.001389,0.24069458)
\psline[linecolor=black, linewidth=0.08](11.401389,-0.9593054)(11.401389,0.24069458)(11.401389,0.24069458)
\psline[linecolor=black, linewidth=0.08](11.401389,-3.3593054)(11.401389,-4.559305)(11.401389,-4.559305)
\psline[linecolor=black, linewidth=0.08](9.001389,-3.3593054)(9.001389,-4.559305)(9.001389,-4.559305)
\psdots[linecolor=black, dotsize=0.4](9.001389,-0.9593054)
\psdots[linecolor=black, dotsize=0.4](11.401389,-0.9593054)
\psdots[linecolor=black, dotsize=0.4](9.001389,-3.3593054)
\psdots[linecolor=black, dotsize=0.4](11.401389,-3.3593054)
\psdots[linecolor=black, dotstyle=o, dotsize=0.4, fillcolor=white](9.001389,0.24069458)
\psdots[linecolor=black, dotstyle=o, dotsize=0.4, fillcolor=white](11.401389,0.24069458)
\psdots[linecolor=black, dotstyle=o, dotsize=0.4, fillcolor=white](10.201389,0.24069458)
\psdots[linecolor=black, dotstyle=o, dotsize=0.4, fillcolor=white](10.201389,-1.7593055)
\psdots[linecolor=black, dotstyle=o, dotsize=0.4, fillcolor=white](10.201389,-2.5593054)
\psdots[linecolor=black, dotstyle=o, dotsize=0.4, fillcolor=white](10.201389,-4.559305)
\psdots[linecolor=black, dotstyle=o, dotsize=0.4, fillcolor=white](9.001389,-4.559305)
\psdots[linecolor=black, dotstyle=o, dotsize=0.4, fillcolor=white](11.401389,-4.559305)
\psline[linecolor=black, linewidth=0.08](12.601389,-2.1593053)(13.801389,-0.9593054)(15.001389,0.24069458)(15.001389,0.24069458)
\psline[linecolor=black, linewidth=0.08](15.001389,0.24069458)(16.20139,-0.9593054)(17.401388,-2.1593053)(17.401388,-2.1593053)(17.401388,-2.1593053)
\psline[linecolor=black, linewidth=0.08](17.401388,-2.1593053)(15.001389,-4.559305)(15.001389,-4.559305)
\psline[linecolor=black, linewidth=0.08](15.001389,-4.559305)(12.601389,-2.1593053)(12.601389,-2.1593053)
\psline[linecolor=black, linewidth=0.08](13.801389,-0.9593054)(15.001389,-1.7593055)(15.001389,-1.7593055)
\psline[linecolor=black, linewidth=0.08](15.001389,-1.7593055)(16.20139,-0.9593054)(16.20139,-0.9593054)
\psline[linecolor=black, linewidth=0.08](13.801389,-3.3593054)(15.001389,-2.5593054)(15.001389,-2.5593054)
\psline[linecolor=black, linewidth=0.08](15.001389,-2.5593054)(16.20139,-3.3593054)(16.20139,-3.3593054)
\psline[linecolor=black, linewidth=0.08](13.801389,-0.9593054)(13.801389,0.24069458)(13.801389,0.24069458)
\psline[linecolor=black, linewidth=0.08](16.20139,-0.9593054)(16.20139,0.24069458)(16.20139,0.24069458)
\psline[linecolor=black, linewidth=0.08](16.20139,-3.3593054)(16.20139,-4.559305)(16.20139,-4.559305)
\psline[linecolor=black, linewidth=0.08](13.801389,-3.3593054)(13.801389,-4.559305)(13.801389,-4.559305)
\psdots[linecolor=black, dotsize=0.4](13.801389,-0.9593054)
\psdots[linecolor=black, dotsize=0.4](16.20139,-0.9593054)
\psdots[linecolor=black, dotsize=0.4](13.801389,-3.3593054)
\psdots[linecolor=black, dotsize=0.4](16.20139,-3.3593054)
\psdots[linecolor=black, dotstyle=o, dotsize=0.4, fillcolor=white](13.801389,0.24069458)
\psdots[linecolor=black, dotstyle=o, dotsize=0.4, fillcolor=white](16.20139,0.24069458)
\psdots[linecolor=black, dotstyle=o, dotsize=0.4, fillcolor=white](15.001389,0.24069458)
\psdots[linecolor=black, dotstyle=o, dotsize=0.4, fillcolor=white](15.001389,-1.7593055)
\psdots[linecolor=black, dotstyle=o, dotsize=0.4, fillcolor=white](15.001389,-2.5593054)
\psdots[linecolor=black, dotstyle=o, dotsize=0.4, fillcolor=white](15.001389,-4.559305)
\psdots[linecolor=black, dotstyle=o, dotsize=0.4, fillcolor=white](13.801389,-4.559305)
\psdots[linecolor=black, dotstyle=o, dotsize=0.4, fillcolor=white](16.20139,-4.559305)
\psline[linecolor=black, linewidth=0.08](19.001389,-2.1593053)(20.20139,-0.9593054)(21.401388,0.24069458)(21.401388,0.24069458)
\psline[linecolor=black, linewidth=0.08](21.401388,0.24069458)(22.601389,-0.9593054)(23.80139,-2.1593053)(23.80139,-2.1593053)(23.80139,-2.1593053)
\psline[linecolor=black, linewidth=0.08](23.80139,-2.1593053)(21.401388,-4.559305)(21.401388,-4.559305)
\psline[linecolor=black, linewidth=0.08](21.401388,-4.559305)(19.001389,-2.1593053)(19.001389,-2.1593053)
\psline[linecolor=black, linewidth=0.08](20.20139,-0.9593054)(21.401388,-1.7593055)(21.401388,-1.7593055)
\psline[linecolor=black, linewidth=0.08](21.401388,-1.7593055)(22.601389,-0.9593054)(22.601389,-0.9593054)
\psline[linecolor=black, linewidth=0.08](20.20139,-3.3593054)(21.401388,-2.5593054)(21.401388,-2.5593054)
\psline[linecolor=black, linewidth=0.08](21.401388,-2.5593054)(22.601389,-3.3593054)(22.601389,-3.3593054)
\psline[linecolor=black, linewidth=0.08](20.20139,-0.9593054)(20.20139,0.24069458)(20.20139,0.24069458)
\psline[linecolor=black, linewidth=0.08](22.601389,-0.9593054)(22.601389,0.24069458)(22.601389,0.24069458)
\psline[linecolor=black, linewidth=0.08](22.601389,-3.3593054)(22.601389,-4.559305)(22.601389,-4.559305)
\psline[linecolor=black, linewidth=0.08](20.20139,-3.3593054)(20.20139,-4.559305)(20.20139,-4.559305)
\psdots[linecolor=black, dotsize=0.4](20.20139,-0.9593054)
\psdots[linecolor=black, dotsize=0.4](22.601389,-0.9593054)
\psdots[linecolor=black, dotsize=0.4](20.20139,-3.3593054)
\psdots[linecolor=black, dotsize=0.4](22.601389,-3.3593054)
\psdots[linecolor=black, dotstyle=o, dotsize=0.4, fillcolor=white](20.20139,0.24069458)
\psdots[linecolor=black, dotstyle=o, dotsize=0.4, fillcolor=white](22.601389,0.24069458)
\psdots[linecolor=black, dotstyle=o, dotsize=0.4, fillcolor=white](21.401388,0.24069458)
\psdots[linecolor=black, dotstyle=o, dotsize=0.4, fillcolor=white](21.401388,-1.7593055)
\psdots[linecolor=black, dotstyle=o, dotsize=0.4, fillcolor=white](21.401388,-2.5593054)
\psdots[linecolor=black, dotstyle=o, dotsize=0.4, fillcolor=white](21.401388,-4.559305)
\psdots[linecolor=black, dotstyle=o, dotsize=0.4, fillcolor=white](20.20139,-4.559305)
\psdots[linecolor=black, dotstyle=o, dotsize=0.4, fillcolor=white](22.601389,-4.559305)
\psdots[linecolor=black, dotsize=0.1](17.80139,-2.1593053)
\psdots[linecolor=black, dotsize=0.1](18.20139,-2.1593053)
\psdots[linecolor=black, dotsize=0.1](18.601389,-2.1593053)
\psline[linecolor=black, linewidth=0.08](7.8013887,-2.1593053)(7.8013887,0.24069458)(7.8013887,0.24069458)
\psline[linecolor=black, linewidth=0.08](12.601389,-2.1593053)(12.201389,0.24069458)(12.201389,0.24069458)
\psline[linecolor=black, linewidth=0.08](12.601389,-2.1593053)(13.001389,0.24069458)(13.001389,0.24069458)
\psline[linecolor=black, linewidth=0.08](17.401388,-2.1593053)(17.001389,0.24069458)(17.001389,0.24069458)
\psline[linecolor=black, linewidth=0.08](19.001389,-2.1593053)(19.401388,0.24069458)(19.401388,0.24069458)
\psline[linecolor=black, linewidth=0.08](23.80139,-2.1593053)(23.80139,0.24069458)(23.80139,0.24069458)
\psdots[linecolor=black, dotsize=0.4](7.8013887,-0.9593054)
\psdots[linecolor=black, dotsize=0.4](23.80139,-0.9593054)
\psdots[linecolor=black, dotstyle=o, dotsize=0.4, fillcolor=white](23.80139,0.24069458)
\psdots[linecolor=black, dotstyle=o, dotsize=0.4, fillcolor=white](19.401388,0.24069458)
\psdots[linecolor=black, dotstyle=o, dotsize=0.4, fillcolor=white](17.001389,0.24069458)
\psdots[linecolor=black, dotstyle=o, dotsize=0.4, fillcolor=white](13.001389,0.24069458)
\psdots[linecolor=black, dotstyle=o, dotsize=0.4, fillcolor=white](12.201389,0.24069458)
\psdots[linecolor=black, dotstyle=o, dotsize=0.4, fillcolor=white](7.8013887,0.24069458)
\psdots[linecolor=black, dotsize=0.4](12.601389,-2.1593053)
\psdots[linecolor=black, dotsize=0.4](17.401388,-2.1593053)
\psdots[linecolor=black, dotsize=0.4](19.001389,-2.1593053)
\psdots[linecolor=black, dotstyle=o, dotsize=0.4, fillcolor=white](23.80139,-2.1593053)
\psdots[linecolor=black, dotstyle=o, dotsize=0.4, fillcolor=white](7.8013887,-2.1593053)
\psdots[linecolor=black, dotsize=0.4](12.361389,-0.9393054)
\psdots[linecolor=black, dotsize=0.4](12.841389,-0.9593054)
\psdots[linecolor=black, dotsize=0.4](17.181389,-0.91930544)
\psdots[linecolor=black, dotsize=0.4](19.181389,-0.91930544)
\end{pspicture}
}
		\end{center}
		\caption{Graphs $G_i$, for $i=1,2,\ldots,n$, and $C(G_1,...,G_n)$, respectively.} \label{fig:chain-upper}
	\end{figure}

\begin{remark}
{\rm We remark that the bounds in Theorem \ref{thm:chain} are tight. For the upper bound, it suffices to consider graphs $G_i$, for $i \in [n]$, and their chain with respect to the vertices $\{x_i, y_i\}_{i=1}^n$, as we see in Figure~\ref{fig:chain-upper}. The set of black vertices in each graph is a $\gst$-set. We can generalize this idea by adding vertices which are adjacent to $x_i$ and $y_i$,  and their neighbours, yielding an infinite family of graphs such that equality holds in this upper bounds.  For the lower bound, consider Figure~\ref{fig:chain-lower}. By the same argument as the upper bound, the set of black vertices in each graph is a $\gst$-set. This idea can be generalized, and therefore we have an infinite family of graphs such that the equality of the lower bound holds.}
	\end{remark}

	\begin{figure}
		\begin{center}
			\psscalebox{0.52 0.52}
{
\begin{pspicture}(0,-7.8293056)(26.402779,-2.1679165)
\psline[linecolor=black, linewidth=0.08](1.4013889,-3.1693053)(2.2013888,-2.3693054)(3.0013888,-2.3693054)(3.8013887,-2.3693054)(4.601389,-3.1693053)(3.8013887,-3.9693055)(2.2013888,-3.9693055)(1.4013889,-3.1693053)(1.4013889,-3.1693053)
\psline[linecolor=black, linewidth=0.08](1.4013889,-5.5693054)(2.2013888,-4.769305)(3.0013888,-4.769305)(3.8013887,-4.769305)(4.601389,-5.5693054)(3.8013887,-6.3693056)(2.2013888,-6.3693056)(1.4013889,-5.5693054)(1.4013889,-5.5693054)
\psline[linecolor=black, linewidth=0.08](4.601389,-3.1693053)(5.8013887,-4.3693056)(4.601389,-5.5693054)(4.601389,-5.5693054)
\psline[linecolor=black, linewidth=0.08](1.4013889,-3.1693053)(0.20138885,-4.3693056)(1.4013889,-5.5693054)
\psdots[linecolor=black, dotsize=0.4](3.0013888,-2.3693054)
\psdots[linecolor=black, dotsize=0.4](3.0013888,-3.9693055)
\psdots[linecolor=black, dotsize=0.4](3.0013888,-4.769305)
\psdots[linecolor=black, dotsize=0.4](3.0013888,-6.3693056)
\psdots[linecolor=black, dotstyle=o, dotsize=0.4, fillcolor=white](2.2013888,-2.3693054)
\psdots[linecolor=black, dotstyle=o, dotsize=0.4, fillcolor=white](3.8013887,-2.3693054)
\psdots[linecolor=black, dotstyle=o, dotsize=0.4, fillcolor=white](3.8013887,-3.9693055)
\psdots[linecolor=black, dotstyle=o, dotsize=0.4, fillcolor=white](2.2013888,-3.9693055)
\psdots[linecolor=black, dotstyle=o, dotsize=0.4, fillcolor=white](2.2013888,-4.769305)
\psdots[linecolor=black, dotstyle=o, dotsize=0.4, fillcolor=white](3.8013887,-4.769305)
\psdots[linecolor=black, dotstyle=o, dotsize=0.4, fillcolor=white](3.8013887,-6.3693056)
\psdots[linecolor=black, dotstyle=o, dotsize=0.4, fillcolor=white](2.2013888,-6.3693056)
\psline[linecolor=black, linewidth=0.08](9.001389,-3.1693053)(9.801389,-2.3693054)(10.601389,-2.3693054)(11.401389,-2.3693054)(12.201389,-3.1693053)(11.401389,-3.9693055)(9.801389,-3.9693055)(9.001389,-3.1693053)(9.001389,-3.1693053)
\psline[linecolor=black, linewidth=0.08](9.001389,-5.5693054)(9.801389,-4.769305)(10.601389,-4.769305)(11.401389,-4.769305)(12.201389,-5.5693054)(11.401389,-6.3693056)(9.801389,-6.3693056)(9.001389,-5.5693054)(9.001389,-5.5693054)
\psline[linecolor=black, linewidth=0.08](12.201389,-3.1693053)(13.401389,-4.3693056)(12.201389,-5.5693054)(12.201389,-5.5693054)
\psline[linecolor=black, linewidth=0.08](9.001389,-3.1693053)(7.8013887,-4.3693056)(9.001389,-5.5693054)
\psdots[linecolor=black, dotsize=0.4](10.601389,-2.3693054)
\psdots[linecolor=black, dotsize=0.4](10.601389,-3.9693055)
\psdots[linecolor=black, dotsize=0.4](10.601389,-4.769305)
\psdots[linecolor=black, dotsize=0.4](10.601389,-6.3693056)
\psdots[linecolor=black, dotstyle=o, dotsize=0.4, fillcolor=white](9.801389,-2.3693054)
\psdots[linecolor=black, dotstyle=o, dotsize=0.4, fillcolor=white](11.401389,-2.3693054)
\psdots[linecolor=black, dotstyle=o, dotsize=0.4, fillcolor=white](11.401389,-3.9693055)
\psdots[linecolor=black, dotstyle=o, dotsize=0.4, fillcolor=white](9.801389,-3.9693055)
\psdots[linecolor=black, dotstyle=o, dotsize=0.4, fillcolor=white](9.801389,-4.769305)
\psdots[linecolor=black, dotstyle=o, dotsize=0.4, fillcolor=white](11.401389,-4.769305)
\psdots[linecolor=black, dotstyle=o, dotsize=0.4, fillcolor=white](11.401389,-6.3693056)
\psdots[linecolor=black, dotstyle=o, dotsize=0.4, fillcolor=white](9.801389,-6.3693056)
\psline[linecolor=black, linewidth=0.08](14.601389,-3.1693053)(15.401389,-2.3693054)(16.20139,-2.3693054)(17.001389,-2.3693054)(17.80139,-3.1693053)(17.001389,-3.9693055)(15.401389,-3.9693055)(14.601389,-3.1693053)(14.601389,-3.1693053)
\psline[linecolor=black, linewidth=0.08](14.601389,-5.5693054)(15.401389,-4.769305)(16.20139,-4.769305)(17.001389,-4.769305)(17.80139,-5.5693054)(17.001389,-6.3693056)(15.401389,-6.3693056)(14.601389,-5.5693054)(14.601389,-5.5693054)
\psline[linecolor=black, linewidth=0.08](17.80139,-3.1693053)(19.001389,-4.3693056)(17.80139,-5.5693054)(17.80139,-5.5693054)
\psline[linecolor=black, linewidth=0.08](14.601389,-3.1693053)(13.401389,-4.3693056)(14.601389,-5.5693054)
\psdots[linecolor=black, dotsize=0.4](16.20139,-2.3693054)
\psdots[linecolor=black, dotsize=0.4](16.20139,-3.9693055)
\psdots[linecolor=black, dotsize=0.4](16.20139,-4.769305)
\psdots[linecolor=black, dotsize=0.4](16.20139,-6.3693056)
\psdots[linecolor=black, dotstyle=o, dotsize=0.4, fillcolor=white](15.401389,-2.3693054)
\psdots[linecolor=black, dotstyle=o, dotsize=0.4, fillcolor=white](17.001389,-2.3693054)
\psdots[linecolor=black, dotstyle=o, dotsize=0.4, fillcolor=white](17.001389,-3.9693055)
\psdots[linecolor=black, dotstyle=o, dotsize=0.4, fillcolor=white](15.401389,-3.9693055)
\psdots[linecolor=black, dotstyle=o, dotsize=0.4, fillcolor=white](15.401389,-4.769305)
\psdots[linecolor=black, dotstyle=o, dotsize=0.4, fillcolor=white](17.001389,-4.769305)
\psdots[linecolor=black, dotstyle=o, dotsize=0.4, fillcolor=white](17.001389,-6.3693056)
\psdots[linecolor=black, dotstyle=o, dotsize=0.4, fillcolor=white](15.401389,-6.3693056)
\psline[linecolor=black, linewidth=0.08](21.80139,-3.1693053)(22.601389,-2.3693054)(23.401388,-2.3693054)(24.20139,-2.3693054)(25.001389,-3.1693053)(24.20139,-3.9693055)(22.601389,-3.9693055)(21.80139,-3.1693053)(21.80139,-3.1693053)
\psline[linecolor=black, linewidth=0.08](21.80139,-5.5693054)(22.601389,-4.769305)(23.401388,-4.769305)(24.20139,-4.769305)(25.001389,-5.5693054)(24.20139,-6.3693056)(22.601389,-6.3693056)(21.80139,-5.5693054)(21.80139,-5.5693054)
\psline[linecolor=black, linewidth=0.08](25.001389,-3.1693053)(26.20139,-4.3693056)(25.001389,-5.5693054)(25.001389,-5.5693054)
\psline[linecolor=black, linewidth=0.08](21.80139,-3.1693053)(20.601389,-4.3693056)(21.80139,-5.5693054)
\psdots[linecolor=black, dotsize=0.4](23.401388,-2.3693054)
\psdots[linecolor=black, dotsize=0.4](23.401388,-3.9693055)
\psdots[linecolor=black, dotsize=0.4](23.401388,-4.769305)
\psdots[linecolor=black, dotsize=0.4](23.401388,-6.3693056)
\psdots[linecolor=black, dotstyle=o, dotsize=0.4, fillcolor=white](22.601389,-2.3693054)
\psdots[linecolor=black, dotstyle=o, dotsize=0.4, fillcolor=white](24.20139,-2.3693054)
\psdots[linecolor=black, dotstyle=o, dotsize=0.4, fillcolor=white](24.20139,-3.9693055)
\psdots[linecolor=black, dotstyle=o, dotsize=0.4, fillcolor=white](22.601389,-3.9693055)
\psdots[linecolor=black, dotstyle=o, dotsize=0.4, fillcolor=white](22.601389,-4.769305)
\psdots[linecolor=black, dotstyle=o, dotsize=0.4, fillcolor=white](24.20139,-4.769305)
\psdots[linecolor=black, dotstyle=o, dotsize=0.4, fillcolor=white](24.20139,-6.3693056)
\psdots[linecolor=black, dotstyle=o, dotsize=0.4, fillcolor=white](22.601389,-6.3693056)
\psdots[linecolor=black, dotsize=0.1](19.401388,-4.3693056)
\psdots[linecolor=black, dotsize=0.1](19.80139,-4.3693056)
\psdots[linecolor=black, dotsize=0.1](20.20139,-4.3693056)
\psdots[linecolor=black, dotsize=0.4](20.601389,-4.3693056)
\psdots[linecolor=black, dotsize=0.4](19.001389,-4.3693056)
\psdots[linecolor=black, dotsize=0.4](13.401389,-4.3693056)
\psdots[linecolor=black, dotstyle=o, dotsize=0.4, fillcolor=white](26.20139,-4.3693056)
\psdots[linecolor=black, dotstyle=o, dotsize=0.4, fillcolor=white](7.8013887,-4.3693056)
\psdots[linecolor=black, dotstyle=o, dotsize=0.4, fillcolor=white](12.201389,-3.1693053)
\psdots[linecolor=black, dotstyle=o, dotsize=0.4, fillcolor=white](12.201389,-5.5693054)
\psdots[linecolor=black, dotstyle=o, dotsize=0.4, fillcolor=white](14.601389,-3.1693053)
\psdots[linecolor=black, dotstyle=o, dotsize=0.4, fillcolor=white](14.601389,-5.5693054)
\psdots[linecolor=black, dotstyle=o, dotsize=0.4, fillcolor=white](21.80139,-3.1693053)
\psdots[linecolor=black, dotstyle=o, dotsize=0.4, fillcolor=white](21.80139,-5.5693054)
\psdots[linecolor=black, dotsize=0.4](25.001389,-3.1693053)
\psdots[linecolor=black, dotsize=0.4](25.001389,-5.5693054)
\psdots[linecolor=black, dotsize=0.4](9.001389,-3.1693053)
\psdots[linecolor=black, dotsize=0.4](9.001389,-5.5693054)
\psdots[linecolor=black, dotstyle=o, dotsize=0.4, fillcolor=white](17.80139,-3.1693053)
\psdots[linecolor=black, dotstyle=o, dotsize=0.4, fillcolor=white](17.80139,-5.5693054)
\psdots[linecolor=black, dotstyle=o, dotsize=0.4, fillcolor=white](5.8013887,-4.3693056)
\psdots[linecolor=black, dotstyle=o, dotsize=0.4, fillcolor=white](0.20138885,-4.3693056)
\psdots[linecolor=black, dotsize=0.4](4.601389,-3.1693053)
\psdots[linecolor=black, dotsize=0.4](1.4013889,-3.1693053)
\psdots[linecolor=black, dotsize=0.4](1.4013889,-5.5693054)
\psdots[linecolor=black, dotsize=0.4](4.601389,-5.5693054)
\rput[bl](2.7613888,-7.8293056){\Large{$G_i$}}
\rput[bl](16.001389,-7.7093053){\Large{$C(G_1,...,G_n)$}}
\rput[bl](0.5213888,-4.4893055){$x_i$}
\rput[bl](5.161389,-4.5093055){$y_i$}
\rput[bl](8.081388,-4.5293055){$x_1$}
\rput[bl](12.821389,-4.5293055){$y_1$}
\rput[bl](13.661388,-4.4893055){$x_2$}
\rput[bl](18.38139,-4.5093055){$y_2$}
\rput[bl](20.86139,-4.5493054){$x_n$}
\rput[bl](25.62139,-4.4693055){$y_n$}
\end{pspicture}
}
		\end{center}
		\caption{Graphs $G_i$, for $i=1,2,\ldots,n$, and $C(G_1,...,G_n)$, respectively.} \label{fig:chain-lower}
	\end{figure}
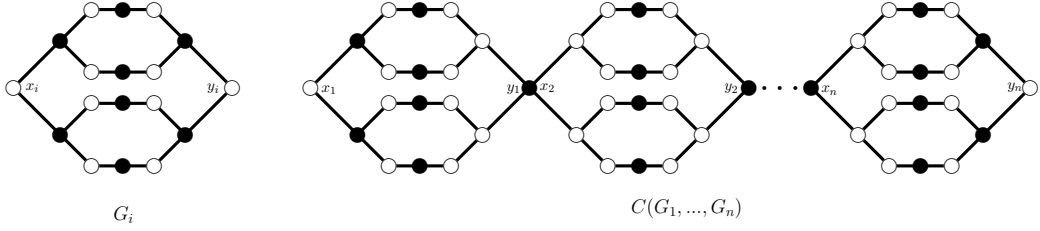


\section{Link of Graphs}

In this section, we study the strong domination number of a link of graphs. The following result is proved in~\cite{AlGhZa-22+}.

\begin{theorem}{\rm (\cite{AlGhZa-22+})} 
 \label{edge-deletion}\cite{AlGhZa-22+}
If $G=(V,E)$ is a graph which is not $K_2$ and $e=uv\in E$, then
 	$$\gamma_{st} (G)-1\leq \gamma_{st}(G-e)\leq \gamma_{st} (G)+\deg (u)+\deg (v)-2.$$	
 \end{theorem}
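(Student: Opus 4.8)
The plan is to establish the two inequalities separately, in each case transforming an optimal strong dominating set of one graph into a strong dominating set of the other and controlling the number of extra vertices needed.

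For the lower bound $\gst(G-e)\ge \gst(G)-1$, I would start from a $\gst$-set $D$ of $G-e$ and rebuild the edge $e=uv$. Passing from $G-e$ to $G$ raises only the degrees of $u$ and $v$ (each by one) and leaves every other degree unchanged, so any vertex $x\notin\{u,v\}$ that was strongly dominated in $G-e$ by some $y\in D$ is still strongly dominated in $G$: its degree is unchanged while the degree of its dominator can only have grown, and $E(G)\supseteq E(G-e)$. Thus the only vertices whose strong domination can fail are $u$ and $v$. Taking $D'=D\cup\{w\}$, where $w\in\{u,v\}$ is the endpoint of larger degree in $G$, repairs this: $w$ now lies in the set, and the other endpoint, being adjacent to $w$ in $G$ and of degree at most $\deg_G(w)$, is strongly dominated by $w$. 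Hence $D'$ is a strong dominating set of $G$ with $|D'|\le |D|+1$, which gives the lower bound.

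For the upper bound I would start from a $\gst$-set $D$ of $G$ and set $A=N_G(u)\setminus\{v\}$ and $B=N_G(v)\setminus\{u\}$, and consider $D'=D\cup A\cup B$, whose cardinality is at most $|D|+|A|+|B|=\gst(G)+\deg(u)+\deg(v)-2$, exactly the target bound. Deleting $e$ lowers only the degrees of $u$ and $v$, so a vertex $x\notin\{u,v\}$ lying outside $D'$ is adjacent to neither $u$ nor $v$; its neighbourhood and degree are untouched by the deletion, and its $G$-dominator (which lies in $D\subseteq D'$ and keeps its degree) continues to strongly dominate it in $G-e$. This reduces everything to strongly dominating the two endpoints $u$ and $v$ in $G-e$. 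If an endpoint already lies in $D$ there is nothing to do; the delicate situation is when, say, $u\notin D$ and the vanished edge carried the unique strong dominator of $u$, namely $v$. Here I would examine the surviving neighbours $A\subseteq D'$ of $u$: if some $a\in A$ has $\deg_G(a)\ge\deg_G(u)-1$ then $a$ strongly dominates $u$ in $G-e$, while otherwise all of $u$'s surviving neighbours have strictly smaller degree, so inserting $u$ into the set lets $u$ strongly dominate each of them, allowing the corresponding vertices to be dropped to respect the budget.

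The main obstacle is precisely this endpoint bookkeeping for the upper bound: the clean construction $D\cup A\cup B$ dominates everything except $u$ and $v$ and meets the cardinality bound with no slack, so any repair at an endpoint must be paid for by discarding an equal number of now-redundant vertices. Making this exchange rigorous requires a case analysis on the membership of $u$ and $v$ in $D$ and on the degrees of their surviving neighbours, together with separate treatment of the degenerate case in which an endpoint, say $u$, is a leaf: then $u$ becomes isolated in $G-e$ and must belong to every strong dominating set, and one uses the hypothesis $G\neq K_2$ (so that $\deg_G(v)\ge 2$ and $v$ is never dominated by $u$) to keep the count within $\deg(u)+\deg(v)-2$. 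Once these endpoint cases are verified, the two bounds combine to give the stated inequalities.
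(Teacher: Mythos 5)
First, a point of reference: the paper does not prove this statement --- it is imported verbatim from \cite{AlGhZa-22+} and used as a black box --- so there is no in-paper proof to compare against. Judged on its own terms, your lower-bound argument is complete and correct: adding $e$ back only raises the degrees of $u$ and $v$, so every vertex outside $\{u,v\}$ keeps its strong dominator, and adjoining the endpoint of larger $G$-degree repairs both $u$ and $v$ at a cost of one. Your upper-bound strategy (start from $D\cup A\cup B$ with $A=N_G(u)\setminus\{v\}$, $B=N_G(v)\setminus\{u\}$, then patch the endpoints) is also the right one, and your observation that at most one endpoint can be problematic (its unique strong dominator being the other endpoint forces that other endpoint into $D$) is the key structural fact.

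The genuine gap is in the leaf case of the upper bound, and your stated fix does not close it. Suppose $\deg_G(u)=1$, so $A=\emptyset$, $u$ is isolated in $G-e$ and must be added, and $u\notin D$ forces $v\in D$. Your set is then $D\cup B\cup\{u\}$, of size up to $\gst(G)+\deg(v)$, which exceeds the budget $\gst(G)+\deg(v)-1$ by one; the hypothesis $\deg_G(v)\ge 2$ does not by itself recover this unit, and in general you cannot simply omit a vertex of $B$, since it can happen that $B\cap D=\emptyset$ and every $x\in B$ has $\deg_G(x)=\deg_G(v)$ with $v$ as its unique strong dominator in $D$ (e.g.\ $v$ adjacent to the leaf $u$ and to two degree-$3$ vertices, each completed to a triangle). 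The repair that works is different from the one you describe: in exactly that bad configuration every vertex of $B$ ends up in the new set and has $G-e$ degree $\deg_G(v)>\deg_{G-e}(v)$, so each of them strongly dominates $v$, and you may delete $v$ itself from the set (all of $v$'s domination duties are now vacuous because $N_{G-e}(v)=B$ lies inside the set). That exchange restores the count to $\gst(G)+\deg(u)+\deg(v)-2$. A parallel remark applies to your phrase ``dropping the corresponding vertices'' in the non-leaf insertion case: you may only drop vertices of $A\setminus D$, and when $A\subseteq D$ you must instead note that the blanket set never paid for $A$ in the first place, so the slack $|A|=\deg(u)-1\ge 1$ absorbs the insertion of $u$. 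With these two exchanges made explicit the plan goes through. (One further caveat, about the statement rather than your proof: the hypothesis $G\neq K_2$ only yields $\deg_G(v)\ge 2$ when $\deg_G(u)=1$ if the component of $e$ is not $K_2$; for a disconnected $G$ containing a $K_2$ component the inequality as written fails, so connectivity is being assumed implicitly.)
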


	\begin{figure}
		\begin{center}
			\psscalebox{0.6 0.6}
{
\begin{pspicture}(0,-5.2)(9.2,-2.8)
\psellipse[linecolor=black, linewidth=0.08, dimen=outer](2.0,-4.0)(2.0,1.2)
\psellipse[linecolor=black, linewidth=0.08, dimen=outer](7.2,-4.0)(2.0,1.2)
\psline[linecolor=black, linewidth=0.08](4.0,-4.0)(5.2,-4.0)(5.2,-4.0)
\psdots[linecolor=black, dotstyle=o, dotsize=0.4, fillcolor=white](4.0,-4.0)
\psdots[linecolor=black, dotstyle=o, dotsize=0.4, fillcolor=white](5.2,-4.0)
\rput[bl](3.94,-4.54){$u$}
\rput[bl](4.92,-4.58){$v$}
\rput[bl](1.8,-4.2){\Large{$G_1$}}
\rput[bl](6.72,-4.2){\Large{$G_2$}}
\end{pspicture}
}
		\end{center}
	\caption{Graph $G$.} \label{fig:bridge}
	\end{figure}

As an immediate result of Theorems \ref{thm:1-gluing} and \ref{edge-deletion}, we have:

\begin{theorem} \label{thm:bridge}
If $G$ is  a graph and $uv\in E(G)$ is a bridge, as shown in Figure \ref{fig:bridge}, then
	$$\gst(G)\geq \gst(G_1)+\gst(G_2)-\deg(u)-\deg(v)+2.$$
	\end{theorem}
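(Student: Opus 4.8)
The plan is to derive this lower bound by deleting the bridge and combining the edge-deletion bound of Theorem~\ref{edge-deletion} with the additivity of $\gst$ over disjoint unions. The key structural observation is that, because $uv$ is a bridge, the graph $G - uv$ is disconnected and its two components are precisely $G_1$ (the component containing $u$) and $G_2$ (the component containing $v$); in other words, $G - uv = G_1 \cup_{K_0} G_2$. This reduces everything to relating $\gst(G)$ and $\gst(G-uv)$, which is exactly what the edge-deletion theorem does.

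Concretely, I would first record via Proposition~\ref{Prop:disconnected} that
\[
\gst(G - uv) = \gst(G_1) + \gst(G_2),
\]
and then apply the upper bound of Theorem~\ref{edge-deletion} to the edge $e = uv$, which gives
\[
\gst(G - uv) \le \gst(G) + \deg(u) + \deg(v) - 2,
\]
where $\deg(u)$ and $\deg(v)$ are the degrees in the full graph $G$ (so they include the bridge edge). Substituting the first identity into the second and rearranging yields
\[
\gst(G) \ge \gst(G_1) + \gst(G_2) - \deg(u) - \deg(v) + 2,
\]
which is exactly the claimed inequality.

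There is no genuinely hard step here; the result really is immediate once the decomposition $G - uv = G_1 \cup_{K_0} G_2$ is identified, so the only things that require care are minor. The first is the hypothesis $G \ne K_2$ needed to invoke Theorem~\ref{edge-deletion}: this is essential rather than cosmetic, since the claimed bound fails for $K_2$ itself (there $\gst(K_2) = 1$ while the right-hand side evaluates to $2$). In the bridge setting this is harmless, because $G = K_2$ occurs only when both $G_1$ and $G_2$ are single vertices, so as soon as at least one side is nontrivial the argument goes through. The second is purely bookkeeping: one must confirm that the degrees appearing in Theorem~\ref{edge-deletion} are measured in $G$ and hence match the $\deg(u),\deg(v)$ in the statement, which they do. Thus the main obstacle is essentially identifying the correct reduction rather than carrying out any substantial computation.
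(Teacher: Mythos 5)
Your proof is correct and takes essentially the same route the paper intends: the paper simply declares the theorem an immediate consequence of Theorems~\ref{thm:1-gluing} and~\ref{edge-deletion}, and your derivation --- delete the bridge, use Proposition~\ref{Prop:disconnected} to get $\gst(G-uv)=\gst(G_1)+\gst(G_2)$, then rearrange the upper bound of Theorem~\ref{edge-deletion} --- is the natural (and arguably more precise) way to make that one-line claim rigorous. Your remark that the hypothesis $G\ne K_2$ is genuinely needed, since the stated bound fails for $K_2$, is a valid minor point that the paper's statement glosses over.
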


	\begin{figure}
		\begin{center}
			\psscalebox{0.8 0.8}
			{
				\begin{pspicture}(0,-4.08)(15.436667,-3.12)
				\psellipse[linecolor=black, linewidth=0.04, dimen=outer](1.2533334,-3.52)(1.0,0.4)
				\psellipse[linecolor=black, linewidth=0.04, dimen=outer](4.0533333,-3.52)(1.0,0.4)
				\psellipse[linecolor=black, linewidth=0.04, dimen=outer](6.853334,-3.52)(1.0,0.4)
				\psellipse[linecolor=black, linewidth=0.04, dimen=outer](11.253334,-3.52)(1.0,0.4)
				\psellipse[linecolor=black, linewidth=0.04, dimen=outer](14.053333,-3.52)(1.0,0.4)
				\psline[linecolor=black, linewidth=0.04](2.2533333,-3.52)(3.0533335,-3.52)(3.0533335,-3.52)
				\psline[linecolor=black, linewidth=0.04](5.0533333,-3.52)(5.8533335,-3.52)(5.8533335,-3.52)
				\psline[linecolor=black, linewidth=0.04](12.253333,-3.52)(13.053333,-3.52)(13.053333,-3.52)
				\psdots[linecolor=black, dotstyle=o, dotsize=0.3, fillcolor=white](2.2533333,-3.52)
				\psdots[linecolor=black, dotstyle=o, dotsize=0.3, fillcolor=white](0.25333345,-3.52)
				\psdots[linecolor=black, dotstyle=o, dotsize=0.3, fillcolor=white](3.0533335,-3.52)
				\psdots[linecolor=black, dotstyle=o, dotsize=0.3, fillcolor=white](5.0533333,-3.52)
				\psdots[linecolor=black, dotstyle=o, dotsize=0.3, fillcolor=white](5.8533335,-3.52)
				\psdots[linecolor=black, dotstyle=o, dotsize=0.3, fillcolor=white](12.253333,-3.52)
				\psdots[linecolor=black, dotstyle=o, dotsize=0.3, fillcolor=white](13.053333,-3.52)
				\psdots[linecolor=black, dotstyle=o, dotsize=0.3, fillcolor=white](15.053333,-3.52)
				\rput[bl](0.0,-4.0133333){$x_1$}
				\rput[bl](2.8400002,-4.08){$x_2$}
				\rput[bl](5.5866666,-4.0){$x_3$}
				\rput[bl](2.1733334,-4.0266666){$y_1$}
				\rput[bl](4.92,-4.0266666){$y_2$}
				\rput[bl](7.7733335,-3.96){$y_3$}
				\rput[bl](0.9600001,-3.7066667){$G_1$}
				\rput[bl](3.8000002,-3.6666667){$G_2$}
				\rput[bl](6.64,-3.64){$G_3$}
				\psline[linecolor=black, linewidth=0.04](8.253333,-3.52)(7.8533335,-3.52)(7.8533335,-3.52)
				\psline[linecolor=black, linewidth=0.04](9.853333,-3.52)(10.253333,-3.52)(10.253333,-3.52)
				\psdots[linecolor=black, dotstyle=o, dotsize=0.3, fillcolor=white](7.8533335,-3.52)
				\psdots[linecolor=black, dotstyle=o, dotsize=0.3, fillcolor=white](10.253333,-3.52)
				\psdots[linecolor=black, dotsize=0.1](8.653334,-3.52)
				\psdots[linecolor=black, dotsize=0.1](9.053333,-3.52)
				\psdots[linecolor=black, dotsize=0.1](9.453334,-3.52)
				\rput[bl](12.8,-3.96){$x_n$}
				\rput[bl](15.026667,-3.9466667){$y_n$}
				\rput[bl](11.986667,-4.0266666){$y_{n-1}$}
				\rput[bl](10.933333,-3.68){$G_{n-1}$}
				\rput[bl](9.8,-4.04){$x_{n-1}$}
				\rput[bl](13.8133335,-3.6533334){$G_n$}
				\end{pspicture}
			}
		\end{center}
	\caption{Link of $n$ graphs $G_1,G_2, \ldots , G_n$.} \label{link-n}
	\end{figure}
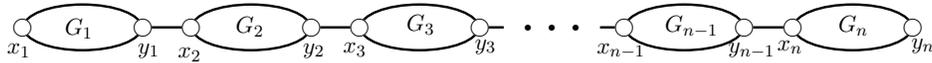

\begin{theorem} \label{thm:link}
	Let $G_1,G_2, \ldots , G_n$ be a finite sequence of pairwise disjoint connected graphs and let
	$x_i,y_i \in V(G_i)$. If $L(G_1,...,G_n)$ is the link of graphs $\{G_i\}_{i=1}^n$ with respect to the vertices $\{x_i, y_i\}_{i=1}^n$ (see Figure \ref{link-n}), then 
		\begin{align*}
		\left( \sum_{i=1}^{n}\gst(G_i)\right) -\left(\sum_{i=2}^{n}\deg(x_i)\right)-\left(\sum_{i=1}^{n-1}\deg(y_i)\right) +2n-2 &\leq  \gst(L(G_1,...,G_n)) \\
&\leq   \left( \sum_{i=1}^{n}\gst(G_i)\right) +n-1.
		\end{align*}
	\end{theorem}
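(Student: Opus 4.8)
The plan is to treat the two bounds separately, mirroring the structure used for the chain in Theorem~\ref{thm:chain}: an explicit construction for the upper bound and an induction built on the bridge estimate of Theorem~\ref{thm:bridge} for the lower bound. Throughout, write $L = L(G_1,\ldots,G_n)$ and recall that $L$ is obtained from the disjoint union of $G_1,\ldots,G_n$ by adding, for each $i \in [n-1]$, the bridge edge $y_i x_{i+1}$. Consequently every internal joining vertex $x_i$ (for $i \ge 2$) and $y_i$ (for $i \le n-1$) lies on exactly one such bridge, so its degree in $L$ exceeds its degree in $G_i$ by exactly~$1$, while all remaining vertices keep their $G_i$-degree. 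This single observation drives both halves of the argument.

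For the upper bound I would take a $\gst$-set $D_i$ of each $G_i$ and, for each bridge $y_i x_{i+1}$, adjoin the endpoint of larger degree in $L$, calling it $w_i$. The candidate set is
\[
D = \Big( \bigcup_{i=1}^n D_i \Big) \cup \{w_1,\ldots,w_{n-1}\},
\]
so that $|D| \le \sum_{i=1}^n \gst(G_i) + (n-1)$. To check that $D$ strongly dominates $L$, observe that a vertex $v$ that is not a bridge endpoint has $\deg_L(v) = \deg_{G_j}(v)$, and if $v\notin D$ there is $w\in D_j$ strongly dominating $v$ in $G_j$, for which $\deg_L(w) \ge \deg_{G_j}(w) \ge \deg_{G_j}(v) = \deg_L(v)$, so $w$ still strongly dominates $v$ in $L$; and each bridge endpoint not already in $D$ is the smaller-degree endpoint of its unique bridge and is therefore strongly dominated by the adjacent vertex $w_i \in D$. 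Equivalently, the upper bound follows by induction, since deleting the last bridge disconnects $L$ into $L(G_1,\ldots,G_{n-1})$ and $G_n$, whence $\gst(L) \le \gst(L-e)+1 = \gst(L(G_1,\ldots,G_{n-1})) + \gst(G_n) + 1$ by Theorem~\ref{edge-deletion} and Proposition~\ref{Prop:disconnected}, and one iterates.

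For the lower bound I would induct on $n$, writing $L_k = L(G_1,\ldots,G_k)$, with base case $L_1 = G_1$. At the inductive step the edge $y_{k-1}x_k$ is a bridge of $L_k$ whose removal separates $L_k$ into the two connected pieces $L_{k-1}$ (containing $y_{k-1}$) and $G_k$ (containing $x_k$). Applying Theorem~\ref{thm:bridge} to this bridge gives
\[
\gst(L_k) \ge \gst(L_{k-1}) + \gst(G_k) - \deg(y_{k-1}) - \deg(x_k) + 2 .
\]
The crucial point is that in $L_{k-1}$ the vertex $y_{k-1}$ does not yet lie on any bridge (the bridges present in $L_{k-1}$ are $y_1x_2,\ldots,y_{k-2}x_{k-1}$), so the degree of $y_{k-1}$ entering the bound is exactly $\deg_{G_{k-1}}(y_{k-1})$, and likewise $x_k$ contributes $\deg_{G_k}(x_k)$. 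Substituting these and iterating the recurrence from $k=2$ to $n$ telescopes the degree terms into $\sum_{i=1}^{n-1}\deg(y_i) + \sum_{i=2}^n \deg(x_i)$ and accumulates the constant $+2$ over the $n-1$ steps into $+2n-2$, which is precisely the claimed bound.

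The main obstacle is exactly this degree accounting. One must verify that each joining vertex is incident with only one bridge at the moment $G_k$ is peeled off, so that the degree appearing in each application of Theorem~\ref{thm:bridge} is the intrinsic $G_i$-degree rather than the inflated degree in $L_k$; this is what lets the additive constant $+2$ survive at every step and produces the gap of $+2n-2$, in contrast to the chain, where vertex-gluing via Theorem~\ref{thm:1-gluing} contributes only $+1$ per junction and hence accumulates to $+n-1$. A secondary technical point is to confirm that each intermediate link $L_k$ meets the hypotheses needed to invoke Theorem~\ref{thm:bridge} and Theorem~\ref{edge-deletion} (in particular that $L_k$ is not $K_2$), which holds once $n \ge 2$ and the $G_i$ are nontrivial connected graphs.
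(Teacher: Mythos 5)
Your upper bound argument is essentially identical to the paper's: adjoin to $\bigcup_i D_i$ the larger-degree endpoint of each bridge and verify strong domination directly. That part is fine. For the lower bound, the paper likewise just iterates Theorem~\ref{thm:bridge}, so your overall route is the intended one; the problem is in your degree accounting, which you yourself flag as ``the crucial point'' and then resolve the wrong way.

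You assert that in the inductive step the degrees entering Theorem~\ref{thm:bridge} are the \emph{intrinsic} degrees $\deg_{G_{k-1}}(y_{k-1})$ and $\deg_{G_k}(x_k)$, because $y_{k-1}$ ``does not yet lie on any bridge'' in $L_{k-1}$. But Theorem~\ref{thm:bridge} is obtained from Theorem~\ref{edge-deletion} applied to the bridge of the \emph{whole} graph $L_k$, so the degrees appearing there are $\deg_{L_k}(y_{k-1})=\deg_{G_{k-1}}(y_{k-1})+1$ and $\deg_{L_k}(x_k)=\deg_{G_k}(x_k)+1$; the version of the bridge bound with component degrees is simply false (take $G=P_3$ with the bridge being one of its edges: it would give $\gst(P_3)\ge \gst(P_2)+\gst(K_1)-1-0+2=3$, whereas $\gst(P_3)=1$). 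With your intrinsic reading the telescoped conclusion is also false: for $n=2$ with $G_1=G_2=P_4$ linked at endpoints, $L=P_8$, your bound reads $\gst(P_8)\ge 2+2-1-1+2=4$, but $\gst(P_8)=3$. The correct bookkeeping is the opposite of what you claim: each application of Theorem~\ref{thm:bridge} contributes the inflated degrees $\deg_{L}(y_{k-1})$ and $\deg_{L}(x_k)$ (each junction vertex lies on exactly one bridge, so its degree in $L_k$ already equals its degree in $L$), the $+2$ per step then survives against those inflated degrees, and the resulting bound $\sum_i\gst(G_i)-\sum_{i=2}^n\deg_L(x_i)-\sum_{i=1}^{n-1}\deg_L(y_i)+2n-2$ is the statement of the theorem with $\deg$ read in $L$. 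So the induction does work, but only under that interpretation of the degrees; as you have written it, the step you single out as essential would prove a strictly stronger and false inequality.
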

\begin{proof}	
First we prove the upper bound.	Suppose that $D_i$ is a $\gst$-sets of $G_i$, for $i \in [n]$. Also, for $j\in [n-1]$, let
	\[
	z_j=\left\{
	\begin{array}{ll}
	{\displaystyle
		x_{j+1}}&
	\quad\mbox{if $\deg(x_{j+1})\geq \deg(y_{j})  $,}\\[15pt]
	{\displaystyle
		y_{j}}&
	\quad\mbox{otherwise.}
	\end{array}
	\right.
	\]
In this case, the set 
$$D=\bigcup\limits_{i=1}^{n} D_i \cup \{z_1,z_2,\ldots,z_{n-1}\},$$
is a strong dominating set of $C(G_1,...,G_n)$, because each vertex in $\overline{D}$ is strongly dominated by the same vertex as before or possibly with $z_j$, for $j \in [n-1]$, and we are done. The lower bound is an immediate result of Theorem \ref{thm:bridge}, and by using induction.
\qed
	\end{proof}

	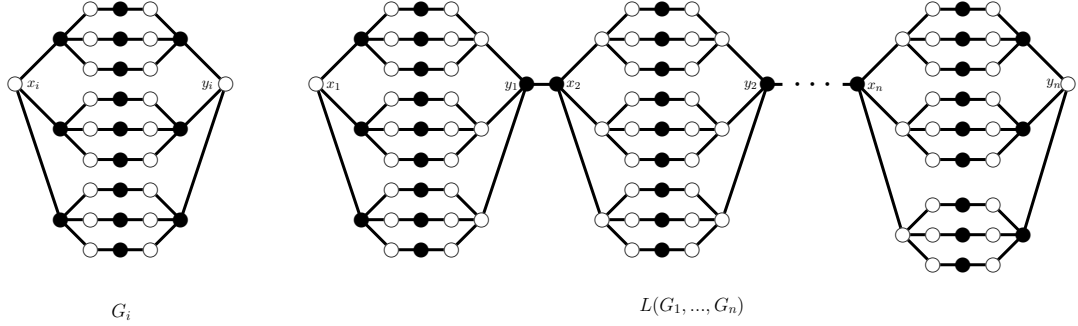
\begin{figure}
		\begin{center}
			\psscalebox{0.50 0.50}
{
\begin{pspicture}(0,-9.229305)(28.402779,-0.76791656)
\psline[linecolor=black, linewidth=0.08](1.4013889,-1.7693055)(2.2013888,-0.9693054)(3.0013888,-0.9693054)(3.8013887,-0.9693054)(4.601389,-1.7693055)(3.8013887,-2.5693054)(2.2013888,-2.5693054)(1.4013889,-1.7693055)(1.4013889,-1.7693055)
\psline[linecolor=black, linewidth=0.08](1.4013889,-4.1693053)(2.2013888,-3.3693054)(3.0013888,-3.3693054)(3.8013887,-3.3693054)(4.601389,-4.1693053)(3.8013887,-4.9693055)(2.2013888,-4.9693055)(1.4013889,-4.1693053)(1.4013889,-4.1693053)
\psline[linecolor=black, linewidth=0.08](4.601389,-1.7693055)(5.8013887,-2.9693055)(4.601389,-4.1693053)(4.601389,-4.1693053)
\psline[linecolor=black, linewidth=0.08](1.4013889,-1.7693055)(0.20138885,-2.9693055)(1.4013889,-4.1693053)
\psdots[linecolor=black, dotsize=0.4](3.0013888,-0.9693054)
\psdots[linecolor=black, dotsize=0.4](3.0013888,-2.5693054)
\psdots[linecolor=black, dotsize=0.4](3.0013888,-3.3693054)
\psdots[linecolor=black, dotsize=0.4](3.0013888,-4.9693055)
\psdots[linecolor=black, fillstyle=solid, dotstyle=o, dotsize=0.4, fillcolor=white](2.2013888,-0.9693054)
\psdots[linecolor=black, fillstyle=solid, dotstyle=o, dotsize=0.4, fillcolor=white](3.8013887,-0.9693054)
\psdots[linecolor=black, fillstyle=solid, dotstyle=o, dotsize=0.4, fillcolor=white](3.8013887,-2.5693054)
\psdots[linecolor=black, fillstyle=solid, dotstyle=o, dotsize=0.4, fillcolor=white](2.2013888,-2.5693054)
\psdots[linecolor=black, fillstyle=solid, dotstyle=o, dotsize=0.4, fillcolor=white](2.2013888,-3.3693054)
\psdots[linecolor=black, fillstyle=solid, dotstyle=o, dotsize=0.4, fillcolor=white](3.8013887,-3.3693054)
\psdots[linecolor=black, fillstyle=solid, dotstyle=o, dotsize=0.4, fillcolor=white](3.8013887,-4.9693055)
\psdots[linecolor=black, fillstyle=solid, dotstyle=o, dotsize=0.4, fillcolor=white](2.2013888,-4.9693055)
\psline[linecolor=black, linewidth=0.08](9.401389,-1.7693055)(10.201389,-0.9693054)(11.001389,-0.9693054)(11.801389,-0.9693054)(12.601389,-1.7693055)(11.801389,-2.5693054)(10.201389,-2.5693054)(9.401389,-1.7693055)(9.401389,-1.7693055)
\psline[linecolor=black, linewidth=0.08](9.401389,-4.1693053)(10.201389,-3.3693054)(11.001389,-3.3693054)(11.801389,-3.3693054)(12.601389,-4.1693053)(11.801389,-4.9693055)(10.201389,-4.9693055)(9.401389,-4.1693053)(9.401389,-4.1693053)
\psline[linecolor=black, linewidth=0.08](12.601389,-1.7693055)(13.801389,-2.9693055)(12.601389,-4.1693053)(12.601389,-4.1693053)
\psline[linecolor=black, linewidth=0.08](9.401389,-1.7693055)(8.201389,-2.9693055)(9.401389,-4.1693053)
\psdots[linecolor=black, dotsize=0.4](11.001389,-0.9693054)
\psdots[linecolor=black, dotsize=0.4](11.001389,-2.5693054)
\psdots[linecolor=black, dotsize=0.4](11.001389,-3.3693054)
\psdots[linecolor=black, dotsize=0.4](11.001389,-4.9693055)
\psdots[linecolor=black, fillstyle=solid, dotstyle=o, dotsize=0.4, fillcolor=white](10.201389,-0.9693054)
\psdots[linecolor=black, fillstyle=solid, dotstyle=o, dotsize=0.4, fillcolor=white](11.801389,-0.9693054)
\psdots[linecolor=black, fillstyle=solid, dotstyle=o, dotsize=0.4, fillcolor=white](11.801389,-2.5693054)
\psdots[linecolor=black, fillstyle=solid, dotstyle=o, dotsize=0.4, fillcolor=white](10.201389,-2.5693054)
\psdots[linecolor=black, fillstyle=solid, dotstyle=o, dotsize=0.4, fillcolor=white](10.201389,-3.3693054)
\psdots[linecolor=black, fillstyle=solid, dotstyle=o, dotsize=0.4, fillcolor=white](11.801389,-3.3693054)
\psdots[linecolor=black, fillstyle=solid, dotstyle=o, dotsize=0.4, fillcolor=white](11.801389,-4.9693055)
\psdots[linecolor=black, fillstyle=solid, dotstyle=o, dotsize=0.4, fillcolor=white](10.201389,-4.9693055)
\psline[linecolor=black, linewidth=0.08](15.801389,-1.7693055)(16.601389,-0.9693054)(17.401388,-0.9693054)(18.20139,-0.9693054)(19.001389,-1.7693055)(18.20139,-2.5693054)(16.601389,-2.5693054)(15.801389,-1.7693055)(15.801389,-1.7693055)
\psline[linecolor=black, linewidth=0.08](15.801389,-4.1693053)(16.601389,-3.3693054)(17.401388,-3.3693054)(18.20139,-3.3693054)(19.001389,-4.1693053)(18.20139,-4.9693055)(16.601389,-4.9693055)(15.801389,-4.1693053)(15.801389,-4.1693053)
\psline[linecolor=black, linewidth=0.08](19.001389,-1.7693055)(20.20139,-2.9693055)(19.001389,-4.1693053)(19.001389,-4.1693053)
\psline[linecolor=black, linewidth=0.08](15.801389,-1.7693055)(14.601389,-2.9693055)(15.801389,-4.1693053)
\psdots[linecolor=black, dotsize=0.4](17.401388,-0.9693054)
\psdots[linecolor=black, dotsize=0.4](17.401388,-2.5693054)
\psdots[linecolor=black, dotsize=0.4](17.401388,-3.3693054)
\psdots[linecolor=black, dotsize=0.4](17.401388,-4.9693055)
\psdots[linecolor=black, fillstyle=solid, dotstyle=o, dotsize=0.4, fillcolor=white](16.601389,-0.9693054)
\psdots[linecolor=black, fillstyle=solid, dotstyle=o, dotsize=0.4, fillcolor=white](18.20139,-0.9693054)
\psdots[linecolor=black, fillstyle=solid, dotstyle=o, dotsize=0.4, fillcolor=white](18.20139,-2.5693054)
\psdots[linecolor=black, fillstyle=solid, dotstyle=o, dotsize=0.4, fillcolor=white](16.601389,-2.5693054)
\psdots[linecolor=black, fillstyle=solid, dotstyle=o, dotsize=0.4, fillcolor=white](16.601389,-3.3693054)
\psdots[linecolor=black, fillstyle=solid, dotstyle=o, dotsize=0.4, fillcolor=white](18.20139,-3.3693054)
\psdots[linecolor=black, fillstyle=solid, dotstyle=o, dotsize=0.4, fillcolor=white](18.20139,-4.9693055)
\psdots[linecolor=black, fillstyle=solid, dotstyle=o, dotsize=0.4, fillcolor=white](16.601389,-4.9693055)
\psline[linecolor=black, linewidth=0.08](23.80139,-1.7693055)(24.601389,-0.9693054)(25.401388,-0.9693054)(26.20139,-0.9693054)(27.001389,-1.7693055)(26.20139,-2.5693054)(24.601389,-2.5693054)(23.80139,-1.7693055)(23.80139,-1.7693055)
\psline[linecolor=black, linewidth=0.08](23.80139,-4.1693053)(24.601389,-3.3693054)(25.401388,-3.3693054)(26.20139,-3.3693054)(27.001389,-4.1693053)(26.20139,-4.9693055)(24.601389,-4.9693055)(23.80139,-4.1693053)(23.80139,-4.1693053)
\psline[linecolor=black, linewidth=0.08](27.001389,-1.7693055)(28.20139,-2.9693055)(27.001389,-4.1693053)(27.001389,-4.1693053)
\psline[linecolor=black, linewidth=0.08](23.80139,-1.7693055)(22.601389,-2.9693055)(23.80139,-4.1693053)
\psdots[linecolor=black, dotsize=0.4](25.401388,-0.9693054)
\psdots[linecolor=black, dotsize=0.4](25.401388,-2.5693054)
\psdots[linecolor=black, dotsize=0.4](25.401388,-3.3693054)
\psdots[linecolor=black, dotsize=0.4](25.401388,-4.9693055)
\psdots[linecolor=black, fillstyle=solid, dotstyle=o, dotsize=0.4, fillcolor=white](24.601389,-0.9693054)
\psdots[linecolor=black, fillstyle=solid, dotstyle=o, dotsize=0.4, fillcolor=white](26.20139,-0.9693054)
\psdots[linecolor=black, fillstyle=solid, dotstyle=o, dotsize=0.4, fillcolor=white](26.20139,-2.5693054)
\psdots[linecolor=black, fillstyle=solid, dotstyle=o, dotsize=0.4, fillcolor=white](24.601389,-2.5693054)
\psdots[linecolor=black, fillstyle=solid, dotstyle=o, dotsize=0.4, fillcolor=white](24.601389,-3.3693054)
\psdots[linecolor=black, fillstyle=solid, dotstyle=o, dotsize=0.4, fillcolor=white](26.20139,-3.3693054)
\psdots[linecolor=black, fillstyle=solid, dotstyle=o, dotsize=0.4, fillcolor=white](26.20139,-4.9693055)
\psdots[linecolor=black, fillstyle=solid, dotstyle=o, dotsize=0.4, fillcolor=white](24.601389,-4.9693055)
\psdots[linecolor=black, dotsize=0.1](21.001389,-2.9693055)
\psdots[linecolor=black, dotsize=0.1](21.401388,-2.9693055)
\psdots[linecolor=black, dotsize=0.1](21.80139,-2.9693055)
\rput[bl](2.7613888,-9.229305){\Large{$G_i$}}
\rput[bl](0.5213888,-3.0893054){$x_i$}
\rput[bl](5.161389,-3.1093054){$y_i$}
\rput[bl](8.481389,-3.1293054){$x_1$}
\rput[bl](13.221389,-3.1293054){$y_1$}
\rput[bl](14.861389,-3.0893054){$x_2$}
\rput[bl](19.581388,-3.1093054){$y_2$}
\rput[bl](22.86139,-3.1493053){$x_n$}
\rput[bl](27.62139,-3.0693054){$y_n$}
\psline[linecolor=black, linewidth=0.08](1.4013889,-1.7693055)(4.601389,-1.7693055)(4.601389,-1.7693055)
\psline[linecolor=black, linewidth=0.08](1.4013889,-4.1693053)(4.601389,-4.1693053)
\psline[linecolor=black, linewidth=0.08](2.2013888,-5.769305)(3.8013887,-5.769305)(4.601389,-6.5693054)(3.8013887,-7.3693056)(2.2013888,-7.3693056)(1.4013889,-6.5693054)(2.2013888,-5.769305)(2.2013888,-5.769305)
\psline[linecolor=black, linewidth=0.08](1.4013889,-6.5693054)(4.601389,-6.5693054)(4.601389,-6.5693054)
\psline[linecolor=black, linewidth=0.08](10.201389,-5.769305)(11.801389,-5.769305)(12.601389,-6.5693054)(11.801389,-7.3693056)(10.201389,-7.3693056)(9.401389,-6.5693054)(10.201389,-5.769305)(10.201389,-5.769305)
\psline[linecolor=black, linewidth=0.08](9.401389,-6.5693054)(12.601389,-6.5693054)(12.601389,-6.5693054)
\psline[linecolor=black, linewidth=0.08](16.601389,-5.769305)(18.20139,-5.769305)(19.001389,-6.5693054)(18.20139,-7.3693056)(16.601389,-7.3693056)(15.801389,-6.5693054)(16.601389,-5.769305)(16.601389,-5.769305)
\psline[linecolor=black, linewidth=0.08](15.801389,-6.5693054)(19.001389,-6.5693054)(19.001389,-6.5693054)
\psline[linecolor=black, linewidth=0.08](24.601389,-6.1693053)(26.20139,-6.1693053)(27.001389,-6.9693055)(26.20139,-7.769305)(24.601389,-7.769305)(23.80139,-6.9693055)(24.601389,-6.1693053)(24.601389,-6.1693053)
\psline[linecolor=black, linewidth=0.08](23.80139,-6.9693055)(27.001389,-6.9693055)(27.001389,-6.9693055)
\psline[linecolor=black, linewidth=0.08](0.20138885,-2.9693055)(1.4013889,-6.5693054)(1.4013889,-6.5693054)
\psline[linecolor=black, linewidth=0.08](5.8013887,-2.9693055)(4.601389,-6.5693054)(4.601389,-6.5693054)
\psline[linecolor=black, linewidth=0.08](9.401389,-1.7693055)(12.601389,-1.7693055)
\psline[linecolor=black, linewidth=0.08](9.401389,-4.1693053)(12.601389,-4.1693053)
\psline[linecolor=black, linewidth=0.08](15.801389,-1.7693055)(19.001389,-1.7693055)
\psline[linecolor=black, linewidth=0.08](15.801389,-4.1693053)(19.001389,-4.1693053)
\psline[linecolor=black, linewidth=0.08](23.80139,-1.7693055)(27.001389,-1.7693055)
\psline[linecolor=black, linewidth=0.08](23.80139,-4.1693053)(27.001389,-4.1693053)
\psline[linecolor=black, linewidth=0.08](8.201389,-2.9693055)(9.401389,-6.5693054)(9.401389,-6.5693054)
\psline[linecolor=black, linewidth=0.08](13.801389,-2.9693055)(12.601389,-6.5693054)(12.601389,-6.5693054)
\psline[linecolor=black, linewidth=0.08](13.801389,-2.9693055)(14.601389,-2.9693055)(14.601389,-2.9693055)
\psline[linecolor=black, linewidth=0.08](14.601389,-2.9693055)(15.801389,-6.5693054)(15.801389,-6.5693054)
\psline[linecolor=black, linewidth=0.08](20.20139,-2.9693055)(19.001389,-6.5693054)(19.001389,-6.5693054)
\psline[linecolor=black, linewidth=0.08](20.601389,-2.9693055)(20.20139,-2.9693055)
\psline[linecolor=black, linewidth=0.08](22.20139,-2.9693055)(22.601389,-2.9693055)
\psline[linecolor=black, linewidth=0.08](22.601389,-2.9693055)(23.80139,-6.9693055)(23.80139,-6.9693055)
\psline[linecolor=black, linewidth=0.08](28.20139,-2.9693055)(27.001389,-6.9693055)(27.001389,-6.9693055)
\rput[bl](16.80139,-9.109305){\Large{$L(G_1,...,G_n)$}}
\psdots[linecolor=black, dotsize=0.4](3.0013888,-1.7693055)
\psdots[linecolor=black, dotsize=0.4](3.0013888,-4.1693053)
\psdots[linecolor=black, dotsize=0.4](3.0013888,-5.769305)
\psdots[linecolor=black, dotsize=0.4](3.0013888,-6.5693054)
\psdots[linecolor=black, dotsize=0.4](3.0013888,-7.3693056)
\psdots[linecolor=black, dotsize=0.4](11.001389,-1.7693055)
\psdots[linecolor=black, dotsize=0.4](11.001389,-4.1693053)
\psdots[linecolor=black, dotsize=0.4](11.001389,-5.769305)
\psdots[linecolor=black, dotsize=0.4](11.001389,-6.5693054)
\psdots[linecolor=black, dotsize=0.4](11.001389,-7.3693056)
\psdots[linecolor=black, dotsize=0.4](17.401388,-1.7693055)
\psdots[linecolor=black, dotsize=0.4](17.401388,-4.1693053)
\psdots[linecolor=black, dotsize=0.4](17.401388,-5.769305)
\psdots[linecolor=black, dotsize=0.4](17.401388,-6.5693054)
\psdots[linecolor=black, dotsize=0.4](17.401388,-7.3693056)
\psdots[linecolor=black, dotsize=0.4](25.401388,-1.7693055)
\psdots[linecolor=black, dotsize=0.4](25.401388,-4.1693053)
\psdots[linecolor=black, dotsize=0.4](25.401388,-6.1693053)
\psdots[linecolor=black, dotsize=0.4](25.401388,-6.9693055)
\psdots[linecolor=black, dotsize=0.4](25.401388,-7.769305)
\psdots[linecolor=black, dotsize=0.4](1.4013889,-1.7693055)
\psdots[linecolor=black, dotsize=0.4](1.4013889,-4.1693053)
\psdots[linecolor=black, dotsize=0.4](1.4013889,-6.5693054)
\psdots[linecolor=black, dotsize=0.4](4.601389,-1.7693055)
\psdots[linecolor=black, dotsize=0.4](4.601389,-4.1693053)
\psdots[linecolor=black, dotsize=0.4](4.601389,-6.5693054)
\psdots[linecolor=black, dotsize=0.4](9.401389,-1.7693055)
\psdots[linecolor=black, dotsize=0.4](9.401389,-4.1693053)
\psdots[linecolor=black, dotsize=0.4](9.401389,-6.5693054)
\psdots[linecolor=black, dotsize=0.4](27.001389,-1.7693055)
\psdots[linecolor=black, dotsize=0.4](27.001389,-4.1693053)
\psdots[linecolor=black, dotsize=0.4](27.001389,-6.9693055)
\psdots[linecolor=black, dotsize=0.4](13.801389,-2.9693055)
\psdots[linecolor=black, dotsize=0.4](14.601389,-2.9693055)
\psdots[linecolor=black, dotsize=0.4](20.20139,-2.9693055)
\psdots[linecolor=black, dotsize=0.4](22.601389,-2.9693055)
\psdots[linecolor=black, dotstyle=o, dotsize=0.4, fillcolor=white](2.2013888,-4.1693053)
\psdots[linecolor=black, dotstyle=o, dotsize=0.4, fillcolor=white](3.8013887,-4.1693053)
\psdots[linecolor=black, dotstyle=o, dotsize=0.4, fillcolor=white](2.2013888,-6.5693054)
\psdots[linecolor=black, dotstyle=o, dotsize=0.4, fillcolor=white](3.8013887,-6.5693054)
\psdots[linecolor=black, dotstyle=o, dotsize=0.4, fillcolor=white](3.8013887,-5.769305)
\psdots[linecolor=black, dotstyle=o, dotsize=0.4, fillcolor=white](2.2013888,-5.769305)
\psdots[linecolor=black, dotstyle=o, dotsize=0.4, fillcolor=white](2.2013888,-7.3693056)
\psdots[linecolor=black, dotstyle=o, dotsize=0.4, fillcolor=white](3.8013887,-7.3693056)
\psdots[linecolor=black, dotstyle=o, dotsize=0.4, fillcolor=white](5.8013887,-2.9693055)
\psdots[linecolor=black, dotstyle=o, dotsize=0.4, fillcolor=white](0.20138885,-2.9693055)
\psdots[linecolor=black, dotstyle=o, dotsize=0.4, fillcolor=white](8.201389,-2.9693055)
\psdots[linecolor=black, dotstyle=o, dotsize=0.4, fillcolor=white](10.201389,-1.7693055)
\psdots[linecolor=black, dotstyle=o, dotsize=0.4, fillcolor=white](10.201389,-4.1693053)
\psdots[linecolor=black, dotstyle=o, dotsize=0.4, fillcolor=white](10.201389,-5.769305)
\psdots[linecolor=black, dotstyle=o, dotsize=0.4, fillcolor=white](10.201389,-6.5693054)
\psdots[linecolor=black, dotstyle=o, dotsize=0.4, fillcolor=white](10.201389,-7.3693056)
\psdots[linecolor=black, dotstyle=o, dotsize=0.4, fillcolor=white](11.801389,-1.7693055)
\psdots[linecolor=black, dotstyle=o, dotsize=0.4, fillcolor=white](11.801389,-4.1693053)
\psdots[linecolor=black, dotstyle=o, dotsize=0.4, fillcolor=white](11.801389,-5.769305)
\psdots[linecolor=black, dotstyle=o, dotsize=0.4, fillcolor=white](11.801389,-6.5693054)
\psdots[linecolor=black, dotstyle=o, dotsize=0.4, fillcolor=white](11.801389,-7.3693056)
\psdots[linecolor=black, dotstyle=o, dotsize=0.4, fillcolor=white](12.601389,-1.7693055)
\psdots[linecolor=black, dotstyle=o, dotsize=0.4, fillcolor=white](12.601389,-4.1693053)
\psdots[linecolor=black, dotstyle=o, dotsize=0.4, fillcolor=white](12.601389,-6.5693054)
\psdots[linecolor=black, dotstyle=o, dotsize=0.4, fillcolor=white](15.801389,-1.7693055)
\psdots[linecolor=black, dotstyle=o, dotsize=0.4, fillcolor=white](15.801389,-4.1693053)
\psdots[linecolor=black, dotstyle=o, dotsize=0.4, fillcolor=white](15.801389,-6.5693054)
\psdots[linecolor=black, dotstyle=o, dotsize=0.4, fillcolor=white](16.601389,-1.7693055)
\psdots[linecolor=black, dotstyle=o, dotsize=0.4, fillcolor=white](16.601389,-4.1693053)
\psdots[linecolor=black, dotstyle=o, dotsize=0.4, fillcolor=white](16.601389,-6.5693054)
\psdots[linecolor=black, dotstyle=o, dotsize=0.4, fillcolor=white](16.601389,-5.769305)
\psdots[linecolor=black, dotstyle=o, dotsize=0.4, fillcolor=white](16.601389,-7.3693056)
\psdots[linecolor=black, dotstyle=o, dotsize=0.4, fillcolor=white](18.20139,-1.7693055)
\psdots[linecolor=black, dotstyle=o, dotsize=0.4, fillcolor=white](18.20139,-4.1693053)
\psdots[linecolor=black, dotstyle=o, dotsize=0.4, fillcolor=white](18.20139,-5.769305)
\psdots[linecolor=black, dotstyle=o, dotsize=0.4, fillcolor=white](18.20139,-6.5693054)
\psdots[linecolor=black, dotstyle=o, dotsize=0.4, fillcolor=white](18.20139,-7.3693056)
\psdots[linecolor=black, dotstyle=o, dotsize=0.4, fillcolor=white](19.001389,-1.7693055)
\psdots[linecolor=black, dotstyle=o, dotsize=0.4, fillcolor=white](19.001389,-4.1693053)
\psdots[linecolor=black, dotstyle=o, dotsize=0.4, fillcolor=white](19.001389,-6.5693054)
\psdots[linecolor=black, dotstyle=o, dotsize=0.4, fillcolor=white](23.80139,-1.7693055)
\psdots[linecolor=black, dotstyle=o, dotsize=0.4, fillcolor=white](23.80139,-4.1693053)
\psdots[linecolor=black, dotstyle=o, dotsize=0.4, fillcolor=white](23.80139,-6.9693055)
\psdots[linecolor=black, dotstyle=o, dotsize=0.4, fillcolor=white](24.601389,-1.7693055)
\psdots[linecolor=black, dotstyle=o, dotsize=0.4, fillcolor=white](24.601389,-4.1693053)
\psdots[linecolor=black, dotstyle=o, dotsize=0.4, fillcolor=white](24.601389,-6.1693053)
\psdots[linecolor=black, dotstyle=o, dotsize=0.4, fillcolor=white](24.601389,-6.9693055)
\psdots[linecolor=black, dotstyle=o, dotsize=0.4, fillcolor=white](24.601389,-7.769305)
\psdots[linecolor=black, dotstyle=o, dotsize=0.4, fillcolor=white](26.20139,-1.7693055)
\psdots[linecolor=black, dotstyle=o, dotsize=0.4, fillcolor=white](26.20139,-4.1693053)
\psdots[linecolor=black, dotstyle=o, dotsize=0.4, fillcolor=white](26.20139,-6.1693053)
\psdots[linecolor=black, dotstyle=o, dotsize=0.4, fillcolor=white](26.20139,-6.9693055)
\psdots[linecolor=black, dotstyle=o, dotsize=0.4, fillcolor=white](26.20139,-7.769305)
\psdots[linecolor=black, dotstyle=o, dotsize=0.4, fillcolor=white](28.20139,-2.9693055)
\psdots[linecolor=black, dotstyle=o, dotsize=0.4, fillcolor=white](2.2013888,-1.7693055)
\psdots[linecolor=black, dotstyle=o, dotsize=0.4, fillcolor=white](3.8013887,-1.7693055)
\end{pspicture}
}
		\end{center}
		\caption{Graphs $G_i$, for $i=1,2,\ldots,n$, and $L(G_1,...,G_n)$, respectively.} \label{fig:link-lower}
	\end{figure}

\begin{remark}
{\rm We remark that the bounds in Theorem \ref{thm:link} are tight. For the lower bound, it suffices to consider graphs $G_i$, for $i \in [n]$, and their link with respect to the vertices $\{x_i, y_i\}_{i=1}^n$ as illustrated in Figure \ref{fig:link-lower}. The set of black vertices in each graph is a $\gst$-set, and we are done. This idea can be generalized, and therefore we have an infinite family of graphs such that the equality of the lower bound holds.
 For the upper bound, consider Figure~\ref{fig:link-upper}. By the same argument as the lower bound, the set of black vertices in each graph is a $\gst$-set, and we are done. This idea can be generalized, so we have an infinite family of graphs such that the equality of the upper bound holds.}
	\end{remark}

	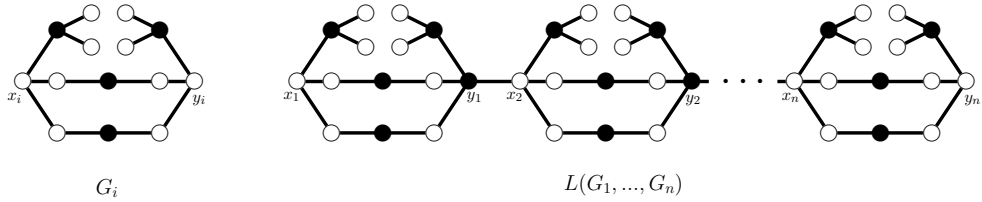
\begin{figure}
		\begin{center}
			\psscalebox{0.57 0.57}
{
\begin{pspicture}(0,-8.049305)(22.73,-3.5479162)
\rput[bl](2.06,-8.049305){\Large{$G_i$}}
\rput[bl](0.0,-5.8493056){$x_i$}
\rput[bl](4.32,-5.8893056){$y_i$}
\rput[bl](6.44,-5.809305){$x_1$}
\rput[bl](10.72,-5.8293056){$y_1$}
\rput[bl](11.64,-5.809305){$x_2$}
\rput[bl](15.82,-5.8893056){$y_2$}
\rput[bl](18.06,-5.8293056){$x_n$}
\rput[bl](22.32,-5.8893056){$y_n$}
\rput[bl](12.98,-8.009305){\Large{$L(G_1,...,G_n)$}}
\psline[linecolor=black, linewidth=0.08](0.36,-5.3493056)(1.16,-4.1493053)(1.16,-4.1493053)
\psline[linecolor=black, linewidth=0.08](0.36,-5.3493056)(1.16,-6.5493054)(1.16,-6.5493054)
\psline[linecolor=black, linewidth=0.08](1.16,-6.5493054)(1.96,-6.5493054)(2.76,-6.5493054)(3.56,-6.5493054)(3.56,-6.5493054)
\psline[linecolor=black, linewidth=0.08](0.36,-5.3493056)(4.36,-5.3493056)(4.36,-5.3493056)
\psline[linecolor=black, linewidth=0.08](4.36,-5.3493056)(3.56,-4.1493053)(3.56,-4.1493053)
\psline[linecolor=black, linewidth=0.08](1.16,-4.1493053)(1.96,-4.5493054)(1.96,-4.5493054)
\psline[linecolor=black, linewidth=0.08](1.16,-4.1493053)(1.96,-3.7493055)(1.96,-3.7493055)
\psline[linecolor=black, linewidth=0.08](2.76,-3.7493055)(3.56,-4.1493053)(3.56,-4.1493053)
\psline[linecolor=black, linewidth=0.08](2.76,-4.5493054)(3.56,-4.1493053)(3.56,-4.1493053)
\psline[linecolor=black, linewidth=0.08](4.36,-5.3493056)(3.56,-6.5493054)(3.56,-6.5493054)
\psdots[linecolor=black, dotstyle=o, dotsize=0.4, fillcolor=white](2.76,-3.7493055)
\psdots[linecolor=black, dotstyle=o, dotsize=0.4, fillcolor=white](1.96,-3.7493055)
\psdots[linecolor=black, dotstyle=o, dotsize=0.4, fillcolor=white](1.96,-4.5493054)
\psdots[linecolor=black, dotstyle=o, dotsize=0.4, fillcolor=white](2.76,-4.5493054)
\psdots[linecolor=black, dotstyle=o, dotsize=0.4, fillcolor=white](1.16,-5.3493056)
\psdots[linecolor=black, dotstyle=o, dotsize=0.4, fillcolor=white](3.56,-5.3493056)
\psdots[linecolor=black, dotstyle=o, dotsize=0.4, fillcolor=white](1.16,-6.5493054)
\psdots[linecolor=black, dotstyle=o, dotsize=0.4, fillcolor=white](3.56,-6.5493054)
\psdots[linecolor=black, dotstyle=o, dotsize=0.4, fillcolor=white](0.36,-5.3493056)
\psdots[linecolor=black, dotsize=0.4](1.16,-4.1493053)
\psdots[linecolor=black, dotsize=0.4](3.56,-4.1493053)
\psdots[linecolor=black, dotsize=0.4](2.36,-5.3493056)
\psdots[linecolor=black, dotsize=0.4](2.36,-6.5493054)
\psline[linecolor=black, linewidth=0.08](6.76,-5.3493056)(7.56,-4.1493053)(7.56,-4.1493053)
\psline[linecolor=black, linewidth=0.08](6.76,-5.3493056)(7.56,-6.5493054)(7.56,-6.5493054)
\psline[linecolor=black, linewidth=0.08](7.56,-6.5493054)(8.36,-6.5493054)(9.16,-6.5493054)(9.96,-6.5493054)(9.96,-6.5493054)
\psline[linecolor=black, linewidth=0.08](6.76,-5.3493056)(10.76,-5.3493056)(10.76,-5.3493056)
\psline[linecolor=black, linewidth=0.08](10.76,-5.3493056)(9.96,-4.1493053)(9.96,-4.1493053)
\psline[linecolor=black, linewidth=0.08](7.56,-4.1493053)(8.36,-4.5493054)(8.36,-4.5493054)
\psline[linecolor=black, linewidth=0.08](7.56,-4.1493053)(8.36,-3.7493055)(8.36,-3.7493055)
\psline[linecolor=black, linewidth=0.08](9.16,-3.7493055)(9.96,-4.1493053)(9.96,-4.1493053)
\psline[linecolor=black, linewidth=0.08](9.16,-4.5493054)(9.96,-4.1493053)(9.96,-4.1493053)
\psline[linecolor=black, linewidth=0.08](10.76,-5.3493056)(9.96,-6.5493054)(9.96,-6.5493054)
\psdots[linecolor=black, dotstyle=o, dotsize=0.4, fillcolor=white](9.16,-3.7493055)
\psdots[linecolor=black, dotstyle=o, dotsize=0.4, fillcolor=white](8.36,-3.7493055)
\psdots[linecolor=black, dotstyle=o, dotsize=0.4, fillcolor=white](8.36,-4.5493054)
\psdots[linecolor=black, dotstyle=o, dotsize=0.4, fillcolor=white](9.16,-4.5493054)
\psdots[linecolor=black, dotstyle=o, dotsize=0.4, fillcolor=white](7.56,-5.3493056)
\psdots[linecolor=black, dotstyle=o, dotsize=0.4, fillcolor=white](9.96,-5.3493056)
\psdots[linecolor=black, dotstyle=o, dotsize=0.4, fillcolor=white](7.56,-6.5493054)
\psdots[linecolor=black, dotstyle=o, dotsize=0.4, fillcolor=white](9.96,-6.5493054)
\psdots[linecolor=black, dotsize=0.4](7.56,-4.1493053)
\psdots[linecolor=black, dotsize=0.4](9.96,-4.1493053)
\psdots[linecolor=black, dotsize=0.4](8.76,-5.3493056)
\psdots[linecolor=black, dotsize=0.4](8.76,-6.5493054)
\psline[linecolor=black, linewidth=0.08](11.96,-5.3493056)(12.76,-4.1493053)(12.76,-4.1493053)
\psline[linecolor=black, linewidth=0.08](11.96,-5.3493056)(12.76,-6.5493054)(12.76,-6.5493054)
\psline[linecolor=black, linewidth=0.08](12.76,-6.5493054)(13.56,-6.5493054)(14.36,-6.5493054)(15.16,-6.5493054)(15.16,-6.5493054)
\psline[linecolor=black, linewidth=0.08](11.96,-5.3493056)(15.96,-5.3493056)(15.96,-5.3493056)
\psline[linecolor=black, linewidth=0.08](15.96,-5.3493056)(15.16,-4.1493053)(15.16,-4.1493053)
\psline[linecolor=black, linewidth=0.08](12.76,-4.1493053)(13.56,-4.5493054)(13.56,-4.5493054)
\psline[linecolor=black, linewidth=0.08](12.76,-4.1493053)(13.56,-3.7493055)(13.56,-3.7493055)
\psline[linecolor=black, linewidth=0.08](14.36,-3.7493055)(15.16,-4.1493053)(15.16,-4.1493053)
\psline[linecolor=black, linewidth=0.08](14.36,-4.5493054)(15.16,-4.1493053)(15.16,-4.1493053)
\psline[linecolor=black, linewidth=0.08](15.96,-5.3493056)(15.16,-6.5493054)(15.16,-6.5493054)
\psdots[linecolor=black, dotstyle=o, dotsize=0.4, fillcolor=white](14.36,-3.7493055)
\psdots[linecolor=black, dotstyle=o, dotsize=0.4, fillcolor=white](13.56,-3.7493055)
\psdots[linecolor=black, dotstyle=o, dotsize=0.4, fillcolor=white](13.56,-4.5493054)
\psdots[linecolor=black, dotstyle=o, dotsize=0.4, fillcolor=white](14.36,-4.5493054)
\psdots[linecolor=black, dotstyle=o, dotsize=0.4, fillcolor=white](12.76,-5.3493056)
\psdots[linecolor=black, dotstyle=o, dotsize=0.4, fillcolor=white](15.16,-5.3493056)
\psdots[linecolor=black, dotstyle=o, dotsize=0.4, fillcolor=white](12.76,-6.5493054)
\psdots[linecolor=black, dotstyle=o, dotsize=0.4, fillcolor=white](15.16,-6.5493054)
\psdots[linecolor=black, dotsize=0.4](12.76,-4.1493053)
\psdots[linecolor=black, dotsize=0.4](15.16,-4.1493053)
\psdots[linecolor=black, dotsize=0.4](13.96,-5.3493056)
\psdots[linecolor=black, dotsize=0.4](13.96,-6.5493054)
\psline[linecolor=black, linewidth=0.08](18.36,-5.3493056)(19.16,-4.1493053)(19.16,-4.1493053)
\psline[linecolor=black, linewidth=0.08](18.36,-5.3493056)(19.16,-6.5493054)(19.16,-6.5493054)
\psline[linecolor=black, linewidth=0.08](19.16,-6.5493054)(19.96,-6.5493054)(20.76,-6.5493054)(21.56,-6.5493054)(21.56,-6.5493054)
\psline[linecolor=black, linewidth=0.08](18.36,-5.3493056)(22.36,-5.3493056)(22.36,-5.3493056)
\psline[linecolor=black, linewidth=0.08](22.36,-5.3493056)(21.56,-4.1493053)(21.56,-4.1493053)
\psline[linecolor=black, linewidth=0.08](19.16,-4.1493053)(19.96,-4.5493054)(19.96,-4.5493054)
\psline[linecolor=black, linewidth=0.08](19.16,-4.1493053)(19.96,-3.7493055)(19.96,-3.7493055)
\psline[linecolor=black, linewidth=0.08](20.76,-3.7493055)(21.56,-4.1493053)(21.56,-4.1493053)
\psline[linecolor=black, linewidth=0.08](20.76,-4.5493054)(21.56,-4.1493053)(21.56,-4.1493053)
\psline[linecolor=black, linewidth=0.08](22.36,-5.3493056)(21.56,-6.5493054)(21.56,-6.5493054)
\psdots[linecolor=black, dotstyle=o, dotsize=0.4, fillcolor=white](20.76,-3.7493055)
\psdots[linecolor=black, dotstyle=o, dotsize=0.4, fillcolor=white](19.96,-3.7493055)
\psdots[linecolor=black, dotstyle=o, dotsize=0.4, fillcolor=white](19.96,-4.5493054)
\psdots[linecolor=black, dotstyle=o, dotsize=0.4, fillcolor=white](20.76,-4.5493054)
\psdots[linecolor=black, dotstyle=o, dotsize=0.4, fillcolor=white](19.16,-5.3493056)
\psdots[linecolor=black, dotstyle=o, dotsize=0.4, fillcolor=white](21.56,-5.3493056)
\psdots[linecolor=black, dotstyle=o, dotsize=0.4, fillcolor=white](19.16,-6.5493054)
\psdots[linecolor=black, dotstyle=o, dotsize=0.4, fillcolor=white](21.56,-6.5493054)
\psdots[linecolor=black, dotsize=0.4](19.16,-4.1493053)
\psdots[linecolor=black, dotsize=0.4](21.56,-4.1493053)
\psdots[linecolor=black, dotsize=0.4](20.36,-5.3493056)
\psdots[linecolor=black, dotsize=0.4](20.36,-6.5493054)
\psline[linecolor=black, linewidth=0.08](16.36,-5.3493056)(15.96,-5.3493056)(15.96,-5.3493056)
\psline[linecolor=black, linewidth=0.08](17.96,-5.3493056)(18.36,-5.3493056)(18.36,-5.3493056)
\psline[linecolor=black, linewidth=0.08](10.76,-5.3493056)(11.96,-5.3493056)(11.96,-5.3493056)
\psdots[linecolor=black, dotsize=0.4](10.76,-5.3493056)
\psdots[linecolor=black, dotsize=0.4](15.96,-5.3493056)
\psdots[linecolor=black, dotstyle=o, dotsize=0.4, fillcolor=white](22.36,-5.3493056)
\psdots[linecolor=black, dotstyle=o, dotsize=0.4, fillcolor=white](18.36,-5.3493056)
\psdots[linecolor=black, dotstyle=o, dotsize=0.4, fillcolor=white](11.96,-5.3493056)
\psdots[linecolor=black, dotstyle=o, dotsize=0.4, fillcolor=white](6.76,-5.3493056)
\psdots[linecolor=black, dotstyle=o, dotsize=0.4, fillcolor=white](4.36,-5.3493056)
\psdots[linecolor=black, dotsize=0.1](16.76,-5.3493056)
\psdots[linecolor=black, dotsize=0.1](17.16,-5.3493056)
\psdots[linecolor=black, dotsize=0.1](17.56,-5.3493056)
\end{pspicture}
}
		\end{center}
		\caption{Graphs $G_i$, for $i=1,2,\ldots,n$, and $C(G_1,...,G_n)$, respectively.} \label{fig:link-upper}
	\end{figure}


\section{Circuit of Graphs}

In this section, we study the strong domination number of circuit of graphs.

	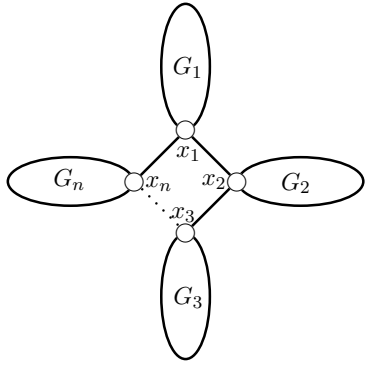
\begin{figure}
		\begin{center}
			\psscalebox{0.85 0.85}
			{
				\begin{pspicture}(0,-7.6)(5.6,-2.0)
				\rput[bl](2.6533334,-4.48){$x_1$}
				\rput[bl](3.0533333,-4.92){$x_2$}
				\rput[bl](2.5733333,-5.4266667){$x_3$}
				\rput[bl](2.6,-3.1733334){$G_1$}
				\rput[bl](4.2933335,-4.9866667){$G_2$}
				\rput[bl](2.6133332,-6.7733335){$G_3$}
				\rput[bl](2.1733334,-4.9466667){$x_n$}
				\rput[bl](0.73333335,-4.9333334){$G_n$}
				\psellipse[linecolor=black, linewidth=0.04, dimen=outer](1.0,-4.8)(1.0,0.4)
				\psellipse[linecolor=black, linewidth=0.04, dimen=outer](4.6,-4.8)(1.0,0.4)
				\psellipse[linecolor=black, linewidth=0.04, dimen=outer](2.8,-3.0)(0.4,1.0)
				\psellipse[linecolor=black, linewidth=0.04, dimen=outer](2.8,-6.6)(0.4,1.0)
				\psline[linecolor=black, linewidth=0.04](2.0,-4.8)(2.8,-4.0)(3.6,-4.8)(2.8,-5.6)(2.8,-5.6)
				\psdots[linecolor=black, fillstyle=solid, dotstyle=o, dotsize=0.3, fillcolor=white](2.8,-4.0)
				\psdots[linecolor=black, fillstyle=solid, dotstyle=o, dotsize=0.3, fillcolor=white](3.6,-4.8)
				\psline[linecolor=black, linewidth=0.04, linestyle=dotted, dotsep=0.10583334cm](2.8,-5.6)(2.0,-4.8)(2.0,-4.8)
				\psdots[linecolor=black, fillstyle=solid, dotstyle=o, dotsize=0.3, fillcolor=white](2.0,-4.8)
				\psdots[linecolor=black, fillstyle=solid, dotstyle=o, dotsize=0.3, fillcolor=white](2.8,-5.6)
				\end{pspicture}
			}
		\end{center}
	\caption{Circuit of $n$ graphs $G_1,G_2, \ldots , G_n$.} \label{circuit-n}
	\end{figure}

	\begin{theorem} \label{thm:circuit}
			Let $G_1,G_2, \ldots , G_n$ be a finite sequence of pairwise disjoint connected graphs and let
	$x_i \in V(G_i)$. If $G$ is the circuit of graphs $\{G_i\}_{i=1}^n$ with respect to the vertices $\{x_i\}_{i=1}^n$ and obtained by identifying the vertex $x_i$ of the graph $G_i$ with the $i$-th vertex of the
	cycle graph $C_n$ (Figure \ref{circuit-n}), then
		\begin{align*}
		\left( \sum_{i=1}^{n}\gst(G_i)\right) -\left(\sum_{i=1}^{n}\deg(x_i)\right)+n \leq  \gst(G)
\leq   \left( \sum_{i=1}^{n}\gst(G_i)\right) +\lfloor\frac{n}{2}\rfloor.
		\end{align*}
	\end{theorem}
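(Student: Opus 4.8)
The plan is to prove the two bounds separately, treating the cycle backbone $C_n$ (on the vertices $x_1,\dots ,x_n$) explicitly and viewing each $G_i$ as vertex-glued to it at $x_i$. Thus $\deg_G(x_i)=\deg_{G_i}(x_i)+2$, while every vertex of $V(G_i)\setminus\{x_i\}$ keeps its degree and its whole neighbourhood from $G_i$, since the only vertex of $V(G_i)$ that lies on the cycle is $x_i$.

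For the upper bound I would start from $\gst$-sets $D_i$ of the $G_i$ and set $D=\bigcup_{i=1}^n D_i\cup S$, where $S\subseteq\{x_1,\dots ,x_n\}$ is an auxiliary set of cycle vertices whose sole job is to handle the attachment vertices. Every interior vertex of $G_i$ remains strongly dominated by $D_i$ in $G$: its degree is unchanged, and if it was dominated by $x_i$ in $G_i$ then $x_i\in D_i$ and the increase $\deg_G(x_i)>\deg_{G_i}(x_i)$ only helps. Hence it suffices to choose $S$ so that each $x_i$ is in $S$ or is strongly dominated in $G$ by a cycle-neighbour lying in $S$. I would build $S$ by walking around the cycle in consecutive pairs $\{x_{2k-1},x_{2k}\}$ and selecting from each pair the vertex of larger degree in $G$; the selected vertex strongly dominates its adjacent partner because its degree is at least the partner's. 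For even $n$ this uses exactly $n/2$ vertices and covers every $x_i$. For odd $n$ I would first put the cycle vertex of globally maximum degree into $S$ (it strongly dominates both of its cycle-neighbours), remove these three consecutive vertices, and pair the remaining path of $n-3$ vertices as before, giving $1+(n-3)/2=\lfloor n/2\rfloor$ vertices in total. Therefore $\gst(G)\le|D|\le\sum_{i=1}^n\gst(G_i)+\lfloor \tfrac{n}{2}\rfloor$.

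For the lower bound I would realise $G$ inductively: set $H_0=C_n$ and let $H_i$ be obtained from $H_{i-1}$ by vertex-gluing $G_i$ at $x_i$, so that $H_n=G$. At the moment $G_i$ is attached, $x_i$ is still an ordinary cycle vertex of $H_{i-1}$ of degree $2$, so applying Theorem~\ref{thm:1-gluing} to this single vertex gluing gives
\[
\gst(H_i)\ge \gst(H_{i-1})+\gst(G_i)-2-\deg_{G_i}(x_i)+1=\gst(H_{i-1})+\gst(G_i)-\bigl(\deg_G(x_i)-1\bigr),
\]
using $\deg_G(x_i)=\deg_{G_i}(x_i)+2$. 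Telescoping over $i=1,\dots ,n$ and discarding the nonnegative term $\gst(H_0)=\gst(C_n)$ yields $\gst(G)\ge\sum_{i=1}^n\gst(G_i)-\sum_{i=1}^n\deg_G(x_i)+n$, which is the claimed lower bound.

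The routine part is the lower bound, which mirrors the inductive proofs of Theorems~\ref{thm:chain} and~\ref{thm:link}; the only care needed is to check that each intermediate gluing vertex really has degree $2$ in $H_{i-1}$, so that the amount lost per step is exactly $\deg_G(x_i)-1$. The main obstacle is the upper bound: ordinary domination of the cycle would cost only $\lceil n/3\rceil$, but strong domination forces the dominator of each $x_i$ to have at least its degree, and since the interior neighbours of $x_i$ may all be of smaller degree, the only reliable dominators are the cycle-neighbours. Arranging the pairing so that it covers every $x_i$ within the budget $\lfloor \tfrac{n}{2}\rfloor$—in particular resolving the parity obstruction for odd $n$ by spending one vertex at a global maximum-degree vertex—is the delicate step.
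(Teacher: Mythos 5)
Your upper bound argument is essentially the paper's: take $\gst$-sets of the $G_i$, observe that interior vertices stay strongly dominated, and cover the attachment vertices by choosing the higher-degree vertex from consecutive pairs on the cycle, spending one vertex at a maximum-degree $x_i$ to absorb the parity issue when $n$ is odd. That part is correct and matches the paper's construction.

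The lower bound, however, has a genuine gap. The quantity $\deg(x_i)$ in the statement is the degree of $x_i$ in the primary subgraph $G_i$, not in $G$: this is the convention of Theorems~\ref{thm:chain} and~\ref{thm:link}, it is what the paper's own proof produces (each $D_i$ is obtained from $D\cap V(G_i)$ at an extra cost of at most $\deg_{G_i}(x_i)-1$), and it is the only reading under which the tightness example of Figure~\ref{fig:circuit-lower} achieves equality. Under that reading your telescoped estimate falls short: each application of Theorem~\ref{thm:1-gluing} costs $\deg_{H_{i-1}}(x_i)+\deg_{G_i}(x_i)-1=\deg_{G_i}(x_i)+1$, so after $n$ steps you obtain
\[
\gst(G)\;\ge\;\gst(C_n)+\sum_{i=1}^{n}\gst(G_i)-\sum_{i=1}^{n}\deg_{G_i}(x_i)-n,
\]
and since $\gst(C_n)=\lceil n/3\rceil$, this is weaker than the claimed bound $\sum_i\gst(G_i)-\sum_i\deg_{G_i}(x_i)+n$ by $2n-\lceil n/3\rceil>0$. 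Rewriting your inequality in terms of $\deg_G(x_i)=\deg_{G_i}(x_i)+2$ does not close this gap; it only relabels the weaker statement. The paper avoids the loss by arguing directly rather than by iterating the gluing bound: starting from a $\gst$-set $D$ of $G$, it restricts $D$ to each $V(G_i)$ and repairs the restriction into a strong dominating set $D_i$ of $G_i$, paying at most $\deg_{G_i}(x_i)-1$ per subgraph (the worst case being $x_i\in D$ strongly dominating all of $N_{G_i}(x_i)$ in $G$ but losing that ability once its degree drops by two in $G_i$, whereupon $x_i$ is swapped for $N_{G_i}(x_i)$). You would need an argument of this direct kind, exploiting that only the single vertex $x_i$ changes degree when passing between $G_i$ and $G$, rather than applying Theorem~\ref{thm:1-gluing} around the cycle.
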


	\begin{proof}	
First we prove the lower bound.	Suppose that $D$ is a $\gst$-sets of $G$. We find strong dominating set of $G_i$, for $i=1,2,\ldots,n$, based on $D$. For $1\leq i \leq n$, we consider the following cases for $G_i$:
\begin{itemize}
\item[(i)]
$x_i\notin D$. In this case, there exists $x_i'\in D$ that strongly dominates  $x_i$. If $x_i'\notin \{x_{i-1},x_{i+1}\}$, then let
$$D_i=D\setminus \left(\bigcup\limits_{j=1}^{i-1} V(G_j)\cup \bigcup\limits_{j=i+1}^{n} V(G_j) \right).$$
If $x_i'\in \{x_{i-1},x_{i+1}\}$, then let
$$D_i=\left(D\cup\{x_i\}\right)\setminus \left(\bigcup\limits_{j=1}^{i-1} V(G_j)\cup \bigcup\limits_{j=i+1}^{n} V(G_j) \right).$$
The resulting set $D_i$ is a strong dominating set of $G_i$.
\item[(ii)]
$x_i\in D$. If $x_i$ does not strongly dominate other vertices, then let
$$D_i=D\setminus \left(\bigcup\limits_{j=1}^{i-1} V(G_j)\cup \bigcup\limits_{j=i+1}^{n} V(G_j) \right).$$
If $x_i$ strongly dominates other vertices in its neighbour but not all of them, say $A$, let
$$D_i=\Big(D\cup (N(x_i)\setminus A)\Big)\setminus \left(\bigcup\limits_{j=1}^{i-1} V(G_j)\cup \bigcup\limits_{j=i+1}^{n} V(G_j) \right).$$
The worst case happens when $x_i$ strongly dominate all of its neighbours in $V(G_i)$, but after forming $G_i$ from $G$, we have $\deg(x_i)<\deg(u)$, for all $u\in N(x_i)$. Then let
$$D_i=\left(D\cup N(x_i)\right)\setminus \left(\bigcup\limits_{j=1}^{i-1} V(G_j)\cup \bigcup\limits_{j=i+1}^{n} V(G_j) \cup\{x_i\}\right).$$
The resulting set $D_i$ is a strong dominating set of $G_i$, in each case.
\end{itemize}
So, generally, by considering $\bigcup\limits_{i=1}^{n} D_i$, we have
$$\sum_{i=1}^{n}\gst(G_i)  \leq  \gst(G) +\left(\sum_{i=1}^{n}\deg(x_i)\right)-n,$$
and we are done with the lower bound. Now, we find the upper bound. Suppose that $S_i$ is a $\gst$-sets of $G_i$, for $i=1,2,\ldots,n$. If $x_i\in S_i$, then after forming $G$, the condition of $x_i$ won't change, since its degree increases by two. But if $x_i\notin S_i$, then there exist a vertex $x_i'\in S_i$ that strongly dominates  $x_i$. If
$\deg_G(x_i)\leq \deg(x_i')$, then it can be strongly dominated by $x_i$. But if we have $\deg_G(x_i)> \deg(x_i')$, for $i=1,2,\ldots,n$, we have a different situation. The worst case happens when for all $1\leq i \leq n$, $x_i\notin S_i$.  In this case, we do as follow; First, we find the vertex with maximum degree among $x_1,x_2,\ldots,x_n$. Without loss of generality, suppose that this vertex is $x_{n-1}$. Then if $n$ is odd, for $j=1,2,\ldots,\lfloor\frac{n-2}{2}\rfloor$, let
	\[
	z_j=\left\{
	\begin{array}{ll}
	{\displaystyle
		x_{2j}}&
	\quad\mbox{if $\deg(x_{2j})\geq \deg(x_{2j-1})  $,}\\[15pt]
	{\displaystyle
		x_{2j-1}}&
	\quad\mbox{otherwise,}
	\end{array}
	\right.
	\]
and
if $n$ is even, for $j=1,2,\ldots,\frac{n-4}{2}$, let
	\[
	z_j=\left\{
	\begin{array}{ll}
	{\displaystyle
		x_{2j}}&
	\quad\mbox{if $\deg(x_{2j})\geq \deg(x_{2j-1})  $,}\\[15pt]
	{\displaystyle
		x_{2j-1}}&
	\quad\mbox{otherwise,}
	\end{array}
	\right.
	\]
and $z_{\frac{n-2}{2}}=x_{n-3}$. Then let
$$S=\bigcup\limits_{i=1}^{n} S_i\cup\{z_i\}_{i=1}^{\lfloor\frac{n-2}{2}\rfloor}\cup\{x_{n-1}\}.$$
The resulting set $S$ is a strong dominating set of  $G$.
Note that the way we constructed the strong dominating set in the worst case can be considered in general, and we infer that
$$\gst(G) \leq   \left( \sum_{i=1}^{n}\gst(G_i)\right) +\lfloor\frac{n}{2}\rfloor,$$
and therefore we are done.
\qed
	\end{proof}

	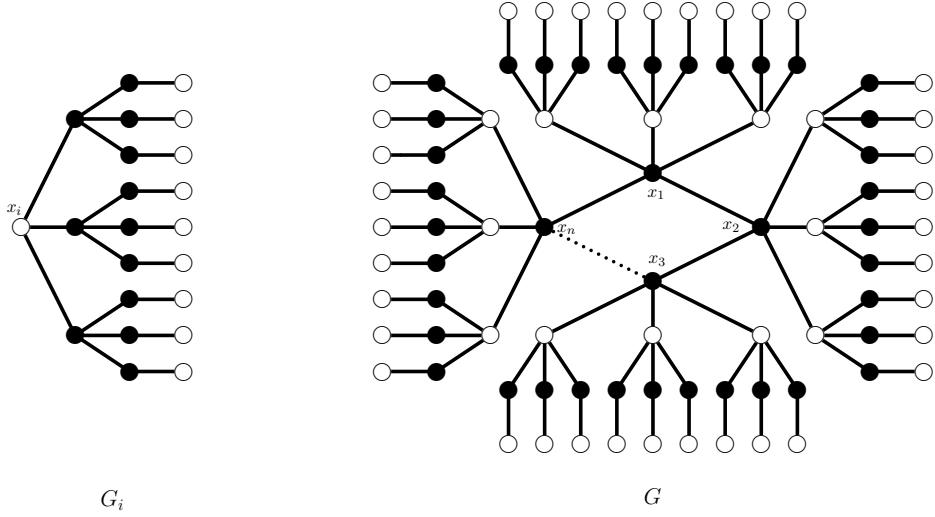
\begin{figure}
		\begin{center}
			\psscalebox{0.60 0.60}
{
\begin{pspicture}(0,-6.2293057)(20.521389,5.0320835)
\rput[bl](2.08,-6.2293053){\Large{$G_i$}}
\rput[bl](0.0,0.31069458){$x_i$}
\rput[bl](14.2,0.6506946){$x_1$}
\rput[bl](15.86,-0.06930542){$x_2$}
\rput[bl](12.2,-0.12930542){$x_n$}
\rput[bl](14.12,-6.1093054){\Large{$G$}}
\psline[linecolor=black, linewidth=0.08](1.52,2.4306946)(2.72,3.2306945)(2.72,3.2306945)
\psline[linecolor=black, linewidth=0.08](1.52,2.4306946)(2.72,1.6306946)(2.72,1.6306946)
\psline[linecolor=black, linewidth=0.08](2.72,3.2306945)(3.92,3.2306945)(3.92,3.2306945)
\psline[linecolor=black, linewidth=0.08](2.72,1.6306946)(3.92,1.6306946)(3.92,1.6306946)
\psline[linecolor=black, linewidth=0.08](1.52,2.4306946)(3.92,2.4306946)(3.92,2.4306946)
\psline[linecolor=black, linewidth=0.08](1.52,0.03069458)(2.72,0.83069456)(2.72,0.83069456)
\psline[linecolor=black, linewidth=0.08](1.52,0.03069458)(2.72,-0.7693054)(2.72,-0.7693054)
\psline[linecolor=black, linewidth=0.08](2.72,0.83069456)(3.92,0.83069456)(3.92,0.83069456)
\psline[linecolor=black, linewidth=0.08](2.72,-0.7693054)(3.92,-0.7693054)(3.92,-0.7693054)
\psline[linecolor=black, linewidth=0.08](1.52,0.03069458)(3.92,0.03069458)(3.92,0.03069458)
\psline[linecolor=black, linewidth=0.08](1.52,-2.3693054)(2.72,-1.5693054)(2.72,-1.5693054)
\psline[linecolor=black, linewidth=0.08](1.52,-2.3693054)(2.72,-3.1693053)(2.72,-3.1693053)
\psline[linecolor=black, linewidth=0.08](2.72,-1.5693054)(3.92,-1.5693054)(3.92,-1.5693054)
\psline[linecolor=black, linewidth=0.08](2.72,-3.1693053)(3.92,-3.1693053)(3.92,-3.1693053)
\psline[linecolor=black, linewidth=0.08](1.52,-2.3693054)(3.92,-2.3693054)(3.92,-2.3693054)
\psline[linecolor=black, linewidth=0.08](1.52,0.03069458)(0.32,0.03069458)(0.32,0.03069458)
\psline[linecolor=black, linewidth=0.08](1.52,2.4306946)(0.32,0.03069458)(0.32,0.03069458)
\psline[linecolor=black, linewidth=0.08](0.32,0.03069458)(1.52,-2.3693054)(1.52,-2.3693054)
\psdots[linecolor=black, dotsize=0.4](2.72,3.2306945)
\psdots[linecolor=black, dotsize=0.4](2.72,2.4306946)
\psdots[linecolor=black, dotsize=0.4](2.72,1.6306946)
\psdots[linecolor=black, dotsize=0.4](2.72,0.83069456)
\psdots[linecolor=black, dotsize=0.4](2.72,0.03069458)
\psdots[linecolor=black, dotsize=0.4](2.72,-0.7693054)
\psdots[linecolor=black, dotsize=0.4](2.72,-1.5693054)
\psdots[linecolor=black, dotsize=0.4](2.72,-2.3693054)
\psdots[linecolor=black, dotsize=0.4](2.72,-3.1693053)
\psdots[linecolor=black, dotstyle=o, dotsize=0.4, fillcolor=white](3.92,3.2306945)
\psdots[linecolor=black, dotstyle=o, dotsize=0.4, fillcolor=white](3.92,2.4306946)
\psdots[linecolor=black, dotstyle=o, dotsize=0.4, fillcolor=white](3.92,1.6306946)
\psdots[linecolor=black, dotstyle=o, dotsize=0.4, fillcolor=white](3.92,0.83069456)
\psdots[linecolor=black, dotstyle=o, dotsize=0.4, fillcolor=white](3.92,0.03069458)
\psdots[linecolor=black, dotstyle=o, dotsize=0.4, fillcolor=white](3.92,-0.7693054)
\psdots[linecolor=black, dotstyle=o, dotsize=0.4, fillcolor=white](3.92,-1.5693054)
\psdots[linecolor=black, dotstyle=o, dotsize=0.4, fillcolor=white](3.92,-2.3693054)
\psdots[linecolor=black, dotstyle=o, dotsize=0.4, fillcolor=white](3.92,-3.1693053)
\psline[linecolor=black, linewidth=0.08](17.92,2.4306946)(19.12,3.2306945)(19.12,3.2306945)
\psline[linecolor=black, linewidth=0.08](17.92,2.4306946)(19.12,1.6306946)(19.12,1.6306946)
\psline[linecolor=black, linewidth=0.08](19.12,3.2306945)(20.32,3.2306945)(20.32,3.2306945)
\psline[linecolor=black, linewidth=0.08](19.12,1.6306946)(20.32,1.6306946)(20.32,1.6306946)
\psline[linecolor=black, linewidth=0.08](17.92,2.4306946)(20.32,2.4306946)(20.32,2.4306946)
\psline[linecolor=black, linewidth=0.08](17.92,0.03069458)(19.12,0.83069456)(19.12,0.83069456)
\psline[linecolor=black, linewidth=0.08](17.92,0.03069458)(19.12,-0.7693054)(19.12,-0.7693054)
\psline[linecolor=black, linewidth=0.08](19.12,0.83069456)(20.32,0.83069456)(20.32,0.83069456)
\psline[linecolor=black, linewidth=0.08](19.12,-0.7693054)(20.32,-0.7693054)(20.32,-0.7693054)
\psline[linecolor=black, linewidth=0.08](17.92,0.03069458)(20.32,0.03069458)(20.32,0.03069458)
\psline[linecolor=black, linewidth=0.08](17.92,-2.3693054)(19.12,-1.5693054)(19.12,-1.5693054)
\psline[linecolor=black, linewidth=0.08](17.92,-2.3693054)(19.12,-3.1693053)(19.12,-3.1693053)
\psline[linecolor=black, linewidth=0.08](19.12,-1.5693054)(20.32,-1.5693054)(20.32,-1.5693054)
\psline[linecolor=black, linewidth=0.08](19.12,-3.1693053)(20.32,-3.1693053)(20.32,-3.1693053)
\psline[linecolor=black, linewidth=0.08](17.92,-2.3693054)(20.32,-2.3693054)(20.32,-2.3693054)
\psline[linecolor=black, linewidth=0.08](17.92,0.03069458)(16.72,0.03069458)(16.72,0.03069458)
\psline[linecolor=black, linewidth=0.08](17.92,2.4306946)(16.72,0.03069458)(16.72,0.03069458)
\psline[linecolor=black, linewidth=0.08](16.72,0.03069458)(17.92,-2.3693054)(17.92,-2.3693054)
\psdots[linecolor=black, dotsize=0.4](19.12,3.2306945)
\psdots[linecolor=black, dotsize=0.4](19.12,2.4306946)
\psdots[linecolor=black, dotsize=0.4](19.12,1.6306946)
\psdots[linecolor=black, dotsize=0.4](19.12,0.83069456)
\psdots[linecolor=black, dotsize=0.4](19.12,0.03069458)
\psdots[linecolor=black, dotsize=0.4](19.12,-0.7693054)
\psdots[linecolor=black, dotsize=0.4](19.12,-1.5693054)
\psdots[linecolor=black, dotsize=0.4](19.12,-2.3693054)
\psdots[linecolor=black, dotsize=0.4](19.12,-3.1693053)
\psdots[linecolor=black, dotstyle=o, dotsize=0.4, fillcolor=white](20.32,3.2306945)
\psdots[linecolor=black, dotstyle=o, dotsize=0.4, fillcolor=white](20.32,2.4306946)
\psdots[linecolor=black, dotstyle=o, dotsize=0.4, fillcolor=white](20.32,1.6306946)
\psdots[linecolor=black, dotstyle=o, dotsize=0.4, fillcolor=white](20.32,0.83069456)
\psdots[linecolor=black, dotstyle=o, dotsize=0.4, fillcolor=white](20.32,0.03069458)
\psdots[linecolor=black, dotstyle=o, dotsize=0.4, fillcolor=white](20.32,-0.7693054)
\psdots[linecolor=black, dotstyle=o, dotsize=0.4, fillcolor=white](20.32,-1.5693054)
\psdots[linecolor=black, dotstyle=o, dotsize=0.4, fillcolor=white](20.32,-2.3693054)
\psdots[linecolor=black, dotstyle=o, dotsize=0.4, fillcolor=white](20.32,-3.1693053)
\psline[linecolor=black, linewidth=0.08](16.72,0.03069458)(14.32,1.2306945)(14.32,1.2306945)
\psline[linecolor=black, linewidth=0.08](16.72,0.03069458)(14.32,-1.1693054)(14.32,-1.1693054)
\psline[linecolor=black, linewidth=0.08](14.32,1.2306945)(11.92,0.03069458)(11.92,0.03069458)
\psline[linecolor=black, linewidth=0.08, linestyle=dotted, dotsep=0.10583334cm](14.32,-1.1693054)(11.92,0.03069458)(11.92,0.03069458)
\psline[linecolor=black, linewidth=0.08](11.92,0.03069458)(10.72,0.03069458)(10.72,0.03069458)
\psline[linecolor=black, linewidth=0.08](10.72,2.4306946)(11.92,0.03069458)(11.92,0.03069458)
\psline[linecolor=black, linewidth=0.08](11.92,0.03069458)(10.72,-2.3693054)
\psline[linecolor=black, linewidth=0.08](10.72,2.4306946)(9.52,3.2306945)(9.52,3.2306945)
\psline[linecolor=black, linewidth=0.08](10.72,2.4306946)(8.32,2.4306946)(8.32,2.4306946)
\psline[linecolor=black, linewidth=0.08](10.72,2.4306946)(9.52,1.6306946)(9.52,1.6306946)
\psline[linecolor=black, linewidth=0.08](8.72,1.6306946)(8.72,1.6306946)
\psline[linecolor=black, linewidth=0.08](8.32,1.6306946)(9.52,1.6306946)
\psline[linecolor=black, linewidth=0.08](8.32,3.2306945)(9.52,3.2306945)
\psline[linecolor=black, linewidth=0.08](10.72,0.03069458)(9.52,0.83069456)(9.52,0.83069456)
\psline[linecolor=black, linewidth=0.08](10.72,0.03069458)(8.32,0.03069458)(8.32,0.03069458)
\psline[linecolor=black, linewidth=0.08](10.72,0.03069458)(9.52,-0.7693054)(9.52,-0.7693054)
\psline[linecolor=black, linewidth=0.08](8.32,-0.7693054)(9.52,-0.7693054)
\psline[linecolor=black, linewidth=0.08](8.32,0.83069456)(9.52,0.83069456)
\psline[linecolor=black, linewidth=0.08](10.72,-2.3693054)(9.52,-1.5693054)(9.52,-1.5693054)
\psline[linecolor=black, linewidth=0.08](10.72,-2.3693054)(8.32,-2.3693054)(8.32,-2.3693054)
\psline[linecolor=black, linewidth=0.08](10.72,-2.3693054)(9.52,-3.1693053)(9.52,-3.1693053)
\psline[linecolor=black, linewidth=0.08](8.32,-3.1693053)(9.52,-3.1693053)
\psline[linecolor=black, linewidth=0.08](8.32,-1.5693054)(9.52,-1.5693054)
\psdots[linecolor=black, dotstyle=o, dotsize=0.4, fillcolor=white](0.32,0.03069458)
\psdots[linecolor=black, dotsize=0.4](1.52,2.4306946)
\psdots[linecolor=black, dotsize=0.4](1.52,0.03069458)
\psdots[linecolor=black, dotsize=0.4](1.52,-2.3693054)
\psdots[linecolor=black, dotsize=0.4](16.72,0.03069458)
\psdots[linecolor=black, dotsize=0.4](11.92,0.03069458)
\psdots[linecolor=black, dotsize=0.4](9.52,3.2306945)
\psdots[linecolor=black, dotsize=0.4](9.52,2.4306946)
\psdots[linecolor=black, dotsize=0.4](9.52,1.6306946)
\psdots[linecolor=black, dotsize=0.4](9.52,0.83069456)
\psdots[linecolor=black, dotsize=0.4](9.52,0.03069458)
\psdots[linecolor=black, dotsize=0.4](9.52,-0.7693054)
\psdots[linecolor=black, dotsize=0.4](9.52,-1.5693054)
\psdots[linecolor=black, dotsize=0.4](9.52,-2.3693054)
\psdots[linecolor=black, dotsize=0.4](9.52,-3.1693053)
\psdots[linecolor=black, dotstyle=o, dotsize=0.4, fillcolor=white](8.32,3.2306945)
\psdots[linecolor=black, dotstyle=o, dotsize=0.4, fillcolor=white](8.32,2.4306946)
\psdots[linecolor=black, dotstyle=o, dotsize=0.4, fillcolor=white](8.32,1.6306946)
\psdots[linecolor=black, dotstyle=o, dotsize=0.4, fillcolor=white](8.32,0.83069456)
\psdots[linecolor=black, dotstyle=o, dotsize=0.4, fillcolor=white](8.32,0.03069458)
\psdots[linecolor=black, dotstyle=o, dotsize=0.4, fillcolor=white](8.32,-0.7693054)
\psdots[linecolor=black, dotstyle=o, dotsize=0.4, fillcolor=white](8.32,-1.5693054)
\psdots[linecolor=black, dotstyle=o, dotsize=0.4, fillcolor=white](8.32,-2.3693054)
\psdots[linecolor=black, dotstyle=o, dotsize=0.4, fillcolor=white](8.32,-3.1693053)
\psline[linecolor=black, linewidth=0.08](14.32,1.2306945)(14.32,2.4306946)(14.32,2.4306946)
\psline[linecolor=black, linewidth=0.08](16.72,2.4306946)(14.32,1.2306945)(14.32,1.2306945)
\psline[linecolor=black, linewidth=0.08](11.92,2.4306946)(14.32,1.2306945)(14.32,1.2306945)
\psline[linecolor=black, linewidth=0.08](16.72,2.4306946)(17.52,3.6306946)(17.52,4.8306947)
\psline[linecolor=black, linewidth=0.08](16.72,4.8306947)(16.72,2.4306946)(16.72,2.4306946)
\psline[linecolor=black, linewidth=0.08](15.92,4.8306947)(15.92,3.6306946)(16.72,2.4306946)
\psline[linecolor=black, linewidth=0.08](14.32,2.4306946)(15.12,3.6306946)(15.12,4.8306947)
\psline[linecolor=black, linewidth=0.08](14.32,4.8306947)(14.32,2.4306946)(14.32,2.4306946)
\psline[linecolor=black, linewidth=0.08](13.52,4.8306947)(13.52,3.6306946)(14.32,2.4306946)
\psline[linecolor=black, linewidth=0.08](11.92,2.4306946)(12.72,3.6306946)(12.72,4.8306947)
\psline[linecolor=black, linewidth=0.08](11.92,4.8306947)(11.92,2.4306946)(11.92,2.4306946)
\psline[linecolor=black, linewidth=0.08](11.12,4.8306947)(11.12,3.6306946)(11.92,2.4306946)
\psline[linecolor=black, linewidth=0.08](14.32,-1.1693054)(14.32,-2.3693054)(14.32,-2.3693054)
\psline[linecolor=black, linewidth=0.08](14.32,-1.1693054)(16.72,-2.3693054)
\psline[linecolor=black, linewidth=0.08](14.32,-1.1693054)(11.92,-2.3693054)
\psline[linecolor=black, linewidth=0.08](11.92,-2.3693054)(11.12,-3.5693054)(11.12,-4.769305)
\psline[linecolor=black, linewidth=0.08](11.92,-2.3693054)(11.92,-3.5693054)(11.92,-4.769305)(11.92,-4.769305)
\psline[linecolor=black, linewidth=0.08](11.92,-2.3693054)(12.72,-3.5693054)(12.72,-4.769305)
\psline[linecolor=black, linewidth=0.08](14.32,-2.3693054)(13.52,-3.5693054)(13.52,-4.769305)
\psline[linecolor=black, linewidth=0.08](14.32,-2.3693054)(14.32,-3.5693054)(14.32,-4.769305)(14.32,-4.769305)
\psline[linecolor=black, linewidth=0.08](14.32,-2.3693054)(15.12,-3.5693054)(15.12,-4.769305)
\psline[linecolor=black, linewidth=0.08](16.72,-2.3693054)(15.92,-3.5693054)(15.92,-4.769305)
\psline[linecolor=black, linewidth=0.08](16.72,-2.3693054)(16.72,-3.5693054)(16.72,-4.769305)(16.72,-4.769305)
\psline[linecolor=black, linewidth=0.08](16.72,-2.3693054)(17.52,-3.5693054)(17.52,-4.769305)
\psdots[linecolor=black, dotstyle=o, dotsize=0.4, fillcolor=white](11.12,4.8306947)
\psdots[linecolor=black, dotstyle=o, dotsize=0.4, fillcolor=white](11.92,4.8306947)
\psdots[linecolor=black, dotstyle=o, dotsize=0.4, fillcolor=white](12.72,4.8306947)
\psdots[linecolor=black, dotstyle=o, dotsize=0.4, fillcolor=white](13.52,4.8306947)
\psdots[linecolor=black, dotstyle=o, dotsize=0.4, fillcolor=white](14.32,4.8306947)
\psdots[linecolor=black, dotstyle=o, dotsize=0.4, fillcolor=white](15.12,4.8306947)
\psdots[linecolor=black, dotstyle=o, dotsize=0.4, fillcolor=white](15.92,4.8306947)
\psdots[linecolor=black, dotstyle=o, dotsize=0.4, fillcolor=white](16.72,4.8306947)
\psdots[linecolor=black, dotstyle=o, dotsize=0.4, fillcolor=white](17.52,4.8306947)
\psdots[linecolor=black, dotstyle=o, dotsize=0.4, fillcolor=white](10.72,2.4306946)
\psdots[linecolor=black, dotstyle=o, dotsize=0.4, fillcolor=white](11.92,2.4306946)
\psdots[linecolor=black, dotstyle=o, dotsize=0.4, fillcolor=white](14.32,2.4306946)
\psdots[linecolor=black, dotstyle=o, dotsize=0.4, fillcolor=white](16.72,2.4306946)
\psdots[linecolor=black, dotstyle=o, dotsize=0.4, fillcolor=white](17.92,2.4306946)
\psdots[linecolor=black, dotstyle=o, dotsize=0.4, fillcolor=white](17.92,0.03069458)
\psdots[linecolor=black, dotstyle=o, dotsize=0.4, fillcolor=white](17.92,-2.3693054)
\psdots[linecolor=black, dotstyle=o, dotsize=0.4, fillcolor=white](16.72,-2.3693054)
\psdots[linecolor=black, dotstyle=o, dotsize=0.4, fillcolor=white](14.32,-2.3693054)
\psdots[linecolor=black, dotstyle=o, dotsize=0.4, fillcolor=white](11.92,-2.3693054)
\psdots[linecolor=black, dotstyle=o, dotsize=0.4, fillcolor=white](10.72,-2.3693054)
\psdots[linecolor=black, dotstyle=o, dotsize=0.4, fillcolor=white](10.72,0.03069458)
\psdots[linecolor=black, dotstyle=o, dotsize=0.4, fillcolor=white](11.12,-4.769305)
\psdots[linecolor=black, dotstyle=o, dotsize=0.4, fillcolor=white](11.92,-4.769305)
\psdots[linecolor=black, dotstyle=o, dotsize=0.4, fillcolor=white](12.72,-4.769305)
\psdots[linecolor=black, dotstyle=o, dotsize=0.4, fillcolor=white](13.52,-4.769305)
\psdots[linecolor=black, dotstyle=o, dotsize=0.4, fillcolor=white](14.32,-4.769305)
\psdots[linecolor=black, dotstyle=o, dotsize=0.4, fillcolor=white](15.12,-4.769305)
\psdots[linecolor=black, dotstyle=o, dotsize=0.4, fillcolor=white](15.92,-4.769305)
\psdots[linecolor=black, dotstyle=o, dotsize=0.4, fillcolor=white](16.72,-4.769305)
\psdots[linecolor=black, dotstyle=o, dotsize=0.4, fillcolor=white](17.52,-4.769305)
\psdots[linecolor=black, dotsize=0.4](11.12,3.6306946)
\psdots[linecolor=black, dotsize=0.4](11.92,3.6306946)
\psdots[linecolor=black, dotsize=0.4](12.72,3.6306946)
\psdots[linecolor=black, dotsize=0.4](13.52,3.6306946)
\psdots[linecolor=black, dotsize=0.4](14.32,3.6306946)
\psdots[linecolor=black, dotsize=0.4](15.12,3.6306946)
\psdots[linecolor=black, dotsize=0.4](15.92,3.6306946)
\psdots[linecolor=black, dotsize=0.4](16.72,3.6306946)
\psdots[linecolor=black, dotsize=0.4](17.52,3.6306946)
\psdots[linecolor=black, dotsize=0.4](17.52,-3.5693054)
\psdots[linecolor=black, dotsize=0.4](16.72,-3.5693054)
\psdots[linecolor=black, dotsize=0.4](15.92,-3.5693054)
\psdots[linecolor=black, dotsize=0.4](15.12,-3.5693054)
\psdots[linecolor=black, dotsize=0.4](14.32,-3.5693054)
\psdots[linecolor=black, dotsize=0.4](13.52,-3.5693054)
\psdots[linecolor=black, dotsize=0.4](12.72,-3.5693054)
\psdots[linecolor=black, dotsize=0.4](11.92,-3.5693054)
\psdots[linecolor=black, dotsize=0.4](11.12,-3.5693054)
\psdots[linecolor=black, dotsize=0.4](14.32,-1.1693054)
\psdots[linecolor=black, dotsize=0.4](14.32,1.2306945)
\rput[bl](14.22,-0.8293054){$x_3$}
\end{pspicture}
}
		\end{center}
		\caption{Graphs $G_i$, for $i=1,2,\ldots,n$, and their circuit $G$, respectively.} \label{fig:circuit-lower}
	\end{figure}

\begin{remark}
{\rm We remark that the bounds in Theorem \ref{thm:circuit} are tight. For the lower bound, it suffices to consider graphs $G_i$, for $i \in [n]$, and their circuit with respect to the vertices $\{x_i\}_{i=1}^n$ as illustrated in Figure \ref{fig:circuit-lower}. The set of black vertices in each graph is a $\gst$-set, and we are done. This idea can be generalized, and therefore we have an infinite family of graphs such that the equality of the lower bound holds.
 For the upper bound, consider Figure \ref{fig:circuit-upper}.
For $i \in [n]$, the set $S_i=\{y_{i_1},y_{i_2},\ldots,y_{i_i}\}$ is a $\gst$-set of $G_i$, and for odd $n$, 
$$S_o=\bigcup\limits_{i=1}^{n} S_i\cup\{x_3,x_5,\ldots,x_{n-2},x_n\},$$
and for even $n$,
$$S_e=\bigcup\limits_{i=1}^{n} S_i\cup\{x_3,x_5,\ldots,x_{n-1},x_n\},$$
are $\gst$-sets of $G$.
  This idea can be generalized, so we have an infinite family of graphs such that the equality of the upper bound holds.} 
	\end{remark}

	\begin{figure}[!h]
		\begin{center}
			\psscalebox{0.60 0.60}
{
\begin{pspicture}(0,-6.629306)(19.21,4.6320834)
\rput[bl](2.88,-6.6293054){\Large{$G_i$}}
\rput[bl](0.0,0.31069458){$x_i$}
\rput[bl](12.92,-6.5093055){\Large{$G$}}
\psline[linecolor=black, linewidth=0.08](2.32,3.6306946)(3.52,4.4306946)(3.52,4.4306946)
\psline[linecolor=black, linewidth=0.08](2.32,0.43069458)(3.52,1.2306945)(3.52,1.2306945)
\psline[linecolor=black, linewidth=0.08](2.32,-3.9693055)(3.52,-3.1693053)(3.52,-3.1693053)
\psdots[linecolor=black, dotstyle=o, dotsize=0.4, fillcolor=white](3.52,4.4306946)
\psdots[linecolor=black, dotstyle=o, dotsize=0.4, fillcolor=white](3.52,1.2306945)
\psdots[linecolor=black, dotstyle=o, dotsize=0.4, fillcolor=white](3.52,-3.1693053)
\psline[linecolor=black, linewidth=0.08](2.32,3.6306946)(3.52,3.6306946)(3.52,3.6306946)
\psline[linecolor=black, linewidth=0.08](2.32,3.6306946)(3.52,2.0306945)(3.52,2.0306945)
\psdots[linecolor=black, dotstyle=o, dotsize=0.4, fillcolor=white](3.52,3.6306946)
\psdots[linecolor=black, dotstyle=o, dotsize=0.4, fillcolor=white](3.52,2.0306945)
\psline[linecolor=black, linewidth=0.08](2.32,0.43069458)(3.52,0.43069458)(3.52,0.43069458)
\psline[linecolor=black, linewidth=0.08](2.32,0.43069458)(3.52,-1.1693054)(3.52,-1.1693054)
\psline[linecolor=black, linewidth=0.08](2.32,-3.9693055)(3.52,-3.9693055)(3.52,-3.9693055)
\psline[linecolor=black, linewidth=0.08](2.32,-3.9693055)(3.52,-5.5693054)(3.52,-5.5693054)
\psdots[linecolor=black, dotstyle=o, dotsize=0.4, fillcolor=white](3.52,0.43069458)
\psdots[linecolor=black, dotstyle=o, dotsize=0.4, fillcolor=white](3.52,-1.1693054)
\psdots[linecolor=black, dotstyle=o, dotsize=0.4, fillcolor=white](3.52,-3.9693055)
\psdots[linecolor=black, dotstyle=o, dotsize=0.4, fillcolor=white](3.52,-5.5693054)
\psdots[linecolor=black, dotsize=0.1](3.52,3.2306945)
\psdots[linecolor=black, dotsize=0.1](3.52,2.8306947)
\psdots[linecolor=black, dotsize=0.1](3.52,2.4306946)
\psdots[linecolor=black, dotsize=0.1](3.52,0.03069458)
\psdots[linecolor=black, dotsize=0.1](3.52,-0.36930543)
\psdots[linecolor=black, dotsize=0.1](3.52,-0.7693054)
\psdots[linecolor=black, dotsize=0.1](3.52,-4.3693056)
\psdots[linecolor=black, dotsize=0.1](3.52,-4.769305)
\psdots[linecolor=black, dotsize=0.1](3.52,-5.1693053)
\psdots[linecolor=black, dotsize=0.1](2.32,-1.5693054)
\psdots[linecolor=black, dotsize=0.1](2.32,-1.9693054)
\psdots[linecolor=black, dotsize=0.1](2.32,-2.3693054)
\psline[linecolor=black, linewidth=0.08](2.32,-3.9693055)(0.72,0.43069458)(0.72,0.43069458)
\psline[linecolor=black, linewidth=0.08](2.32,3.6306946)(0.72,0.43069458)(0.72,0.43069458)
\psline[linecolor=black, linewidth=0.08](0.72,0.43069458)(2.32,0.43069458)(2.32,0.43069458)
\psdots[linecolor=black, dotstyle=o, dotsize=0.4, fillcolor=white](0.72,0.43069458)
\rput[bl](2.14,2.9706945){$y_{i_1}$}
\rput[bl](2.12,0.7506946){$y_{i_2}$}
\rput[bl](2.12,-4.6693053){$y_{i_i}$}
\rput[bl](3.9,4.2706947){$z_{1_1}$}
\rput[bl](3.86,3.4306946){$z_{1_2}$}
\rput[bl](3.84,1.7906946){$z_{1_i}$}
\rput[bl](3.84,1.1106945){$z_{2_1}$}
\rput[bl](3.84,0.23069458){$z_{2_2}$}
\rput[bl](3.82,-1.3893055){$z_{2_i}$}
\rput[bl](3.88,-3.3493054){$z_{i_1}$}
\rput[bl](3.84,-4.2293053){$z_{i_2}$}
\rput[bl](3.8,-5.809305){$z_{i_i}$}
\psline[linecolor=black, linewidth=0.08](17.12,3.6306946)(18.32,4.4306946)(18.32,4.4306946)
\psline[linecolor=black, linewidth=0.08](17.12,0.43069458)(18.32,1.2306945)(18.32,1.2306945)
\psline[linecolor=black, linewidth=0.08](17.12,-3.9693055)(18.32,-3.1693053)(18.32,-3.1693053)
\psdots[linecolor=black, dotstyle=o, dotsize=0.4, fillcolor=white](18.32,4.4306946)
\psdots[linecolor=black, dotstyle=o, dotsize=0.4, fillcolor=white](18.32,1.2306945)
\psdots[linecolor=black, dotstyle=o, dotsize=0.4, fillcolor=white](18.32,-3.1693053)
\psline[linecolor=black, linewidth=0.08](17.12,3.6306946)(18.32,3.6306946)(18.32,3.6306946)
\psline[linecolor=black, linewidth=0.08](17.12,3.6306946)(18.32,2.0306945)(18.32,2.0306945)
\psdots[linecolor=black, dotstyle=o, dotsize=0.4, fillcolor=white](18.32,3.6306946)
\psdots[linecolor=black, dotstyle=o, dotsize=0.4, fillcolor=white](18.32,2.0306945)
\psline[linecolor=black, linewidth=0.08](17.12,0.43069458)(18.32,0.43069458)(18.32,0.43069458)
\psline[linecolor=black, linewidth=0.08](17.12,0.43069458)(18.32,-1.1693054)(18.32,-1.1693054)
\psline[linecolor=black, linewidth=0.08](17.12,-3.9693055)(18.32,-3.9693055)(18.32,-3.9693055)
\psline[linecolor=black, linewidth=0.08](17.12,-3.9693055)(18.32,-5.5693054)(18.32,-5.5693054)
\psdots[linecolor=black, dotstyle=o, dotsize=0.4, fillcolor=white](18.32,0.43069458)
\psdots[linecolor=black, dotstyle=o, dotsize=0.4, fillcolor=white](18.32,-1.1693054)
\psdots[linecolor=black, dotstyle=o, dotsize=0.4, fillcolor=white](18.32,-3.9693055)
\psdots[linecolor=black, dotstyle=o, dotsize=0.4, fillcolor=white](18.32,-5.5693054)
\psdots[linecolor=black, dotsize=0.1](18.32,3.2306945)
\psdots[linecolor=black, dotsize=0.1](18.32,2.8306947)
\psdots[linecolor=black, dotsize=0.1](18.32,2.4306946)
\psdots[linecolor=black, dotsize=0.1](18.32,0.03069458)
\psdots[linecolor=black, dotsize=0.1](18.32,-0.36930543)
\psdots[linecolor=black, dotsize=0.1](18.32,-0.7693054)
\psdots[linecolor=black, dotsize=0.1](18.32,-4.3693056)
\psdots[linecolor=black, dotsize=0.1](18.32,-4.769305)
\psdots[linecolor=black, dotsize=0.1](18.32,-5.1693053)
\psdots[linecolor=black, dotsize=0.1](17.12,-1.5693054)
\psdots[linecolor=black, dotsize=0.1](17.12,-1.9693054)
\psdots[linecolor=black, dotsize=0.1](17.12,-2.3693054)
\psline[linecolor=black, linewidth=0.08](17.12,-3.9693055)(15.52,0.43069458)(15.52,0.43069458)
\psline[linecolor=black, linewidth=0.08](17.12,3.6306946)(15.52,0.43069458)(15.52,0.43069458)
\psline[linecolor=black, linewidth=0.08](15.52,0.43069458)(17.12,0.43069458)(17.12,0.43069458)
\rput[bl](18.7,4.2706947){$z_{1_1}$}
\rput[bl](18.66,3.4306946){$z_{1_2}$}
\rput[bl](18.64,1.1106945){$z_{2_1}$}
\rput[bl](18.64,0.23069458){$z_{2_2}$}
\rput[bl](15.74,0.03069458){$x_n$}
\rput[bl](16.92,-4.6693053){$y_{n_n}$}
\rput[bl](16.92,0.7506946){$y_{n_2}$}
\rput[bl](16.94,2.9706945){$y_{n_1}$}
\rput[bl](18.64,1.7906946){$z_{1_n}$}
\rput[bl](18.62,-1.3893055){$z_{2_n}$}
\rput[bl](18.68,-3.3493054){$z_{n_1}$}
\rput[bl](18.64,-4.2293053){$z_{n_2}$}
\rput[bl](18.6,-5.809305){$z_{n_n}$}
\psline[linecolor=black, linewidth=0.08](15.52,0.43069458)(12.72,2.8306947)(12.72,2.8306947)
\psline[linecolor=black, linewidth=0.08](9.92,0.43069458)(12.72,2.8306947)
\psline[linecolor=black, linewidth=0.08](9.92,0.43069458)(12.72,-1.9693054)(12.72,-1.9693054)
\psdots[linecolor=black, dotstyle=o, dotsize=0.4, fillcolor=white](2.32,3.6306946)
\psdots[linecolor=black, dotstyle=o, dotsize=0.4, fillcolor=white](2.32,0.43069458)
\psdots[linecolor=black, dotstyle=o, dotsize=0.4, fillcolor=white](2.32,-3.9693055)
\psdots[linecolor=black, dotstyle=o, dotsize=0.4, fillcolor=white](17.12,3.6306946)
\psdots[linecolor=black, dotstyle=o, dotsize=0.4, fillcolor=white](17.12,0.43069458)
\psdots[linecolor=black, dotstyle=o, dotsize=0.4, fillcolor=white](17.12,-3.9693055)
\psline[linecolor=black, linewidth=0.08, linestyle=dotted, dotsep=0.10583334cm](12.72,-1.9693054)(15.52,0.43069458)
\psline[linecolor=black, linewidth=0.08](12.72,2.8306947)(12.72,3.6306946)(12.72,4.4306946)
\psline[linecolor=black, linewidth=0.08](9.92,0.43069458)(8.32,2.0306945)
\psline[linecolor=black, linewidth=0.08](9.92,0.43069458)(8.32,-1.1693054)(8.32,-1.1693054)
\psline[linecolor=black, linewidth=0.08](12.72,-1.9693054)(12.72,-3.9693055)(12.72,-3.9693055)
\psline[linecolor=black, linewidth=0.08](12.72,-3.9693055)(11.92,-5.1693053)(11.92,-5.1693053)
\psline[linecolor=black, linewidth=0.08](12.72,-3.9693055)(12.72,-5.1693053)(12.72,-5.1693053)
\psline[linecolor=black, linewidth=0.08](12.72,-3.9693055)(13.52,-5.1693053)(13.52,-5.1693053)
\psline[linecolor=black, linewidth=0.08](14.32,-5.1693053)(15.12,-3.9693055)(15.12,-3.9693055)
\psline[linecolor=black, linewidth=0.08](15.12,-3.9693055)(15.12,-5.1693053)(15.12,-5.1693053)
\psline[linecolor=black, linewidth=0.08](15.92,-5.1693053)(15.12,-3.9693055)
\psline[linecolor=black, linewidth=0.08](11.12,-5.1693053)(10.32,-3.9693055)
\psline[linecolor=black, linewidth=0.08](9.52,-5.1693053)(10.32,-3.9693055)
\psline[linecolor=black, linewidth=0.08](10.32,-5.1693053)(10.32,-3.9693055)
\psline[linecolor=black, linewidth=0.08](10.32,-3.9693055)(12.72,-1.9693054)(12.72,-1.9693054)
\psline[linecolor=black, linewidth=0.08](12.72,-1.9693054)(15.12,-3.9693055)(15.12,-3.9693055)
\psline[linecolor=black, linewidth=0.08](8.32,2.0306945)(7.12,0.83069456)(7.12,0.83069456)
\psline[linecolor=black, linewidth=0.08](7.12,3.2306945)(8.32,2.0306945)
\psline[linecolor=black, linewidth=0.08](7.12,0.03069458)(8.32,-1.1693054)(8.32,-1.1693054)
\psline[linecolor=black, linewidth=0.08](8.32,-1.1693054)(7.12,-2.3693054)
\psdots[linecolor=black, dotstyle=o, dotsize=0.4, fillcolor=white](12.72,4.4306946)
\psdots[linecolor=black, dotstyle=o, dotsize=0.4, fillcolor=white](7.12,3.2306945)
\psdots[linecolor=black, dotstyle=o, dotsize=0.4, fillcolor=white](7.12,0.83069456)
\psdots[linecolor=black, dotstyle=o, dotsize=0.4, fillcolor=white](7.12,0.03069458)
\psdots[linecolor=black, dotstyle=o, dotsize=0.4, fillcolor=white](7.12,-2.3693054)
\psdots[linecolor=black, dotstyle=o, dotsize=0.4, fillcolor=white](9.52,-5.1693053)
\psdots[linecolor=black, dotstyle=o, dotsize=0.4, fillcolor=white](10.32,-5.1693053)
\psdots[linecolor=black, dotstyle=o, dotsize=0.4, fillcolor=white](11.12,-5.1693053)
\psdots[linecolor=black, dotstyle=o, dotsize=0.4, fillcolor=white](11.92,-5.1693053)
\psdots[linecolor=black, dotstyle=o, dotsize=0.4, fillcolor=white](12.72,-5.1693053)
\psdots[linecolor=black, dotstyle=o, dotsize=0.4, fillcolor=white](13.52,-5.1693053)
\psdots[linecolor=black, dotstyle=o, dotsize=0.4, fillcolor=white](14.32,-5.1693053)
\psdots[linecolor=black, dotstyle=o, dotsize=0.4, fillcolor=white](15.12,-5.1693053)
\psdots[linecolor=black, dotstyle=o, dotsize=0.4, fillcolor=white](15.92,-5.1693053)
\psdots[linecolor=black, dotstyle=o, dotsize=0.4, fillcolor=white](8.32,2.0306945)
\psdots[linecolor=black, dotstyle=o, dotsize=0.4, fillcolor=white](8.32,-1.1693054)
\psdots[linecolor=black, dotstyle=o, dotsize=0.4, fillcolor=white](9.92,0.43069458)
\psdots[linecolor=black, dotstyle=o, dotsize=0.4, fillcolor=white](12.72,2.8306947)
\psdots[linecolor=black, dotstyle=o, dotsize=0.4, fillcolor=white](15.52,0.43069458)
\psdots[linecolor=black, dotstyle=o, dotsize=0.4, fillcolor=white](12.72,-1.9693054)
\psdots[linecolor=black, dotstyle=o, dotsize=0.4, fillcolor=white](10.32,-3.9693055)
\psdots[linecolor=black, dotstyle=o, dotsize=0.4, fillcolor=white](12.72,-3.9693055)
\psdots[linecolor=black, dotstyle=o, dotsize=0.4, fillcolor=white](15.12,-3.9693055)
\rput[bl](12.56,2.2106946){$x_1$}
\rput[bl](10.32,0.31069458){$x_2$}
\rput[bl](12.58,-1.6693054){$x_3$}
\psdots[linecolor=black, dotstyle=o, dotsize=0.4, fillcolor=white](12.72,3.6306946)
\end{pspicture}
}
		\end{center}
		\caption{Graphs $G_i$, for $i=1,2,\ldots,n$, and their circuit $G$, respectively.} \label{fig:circuit-upper}
	\end{figure}
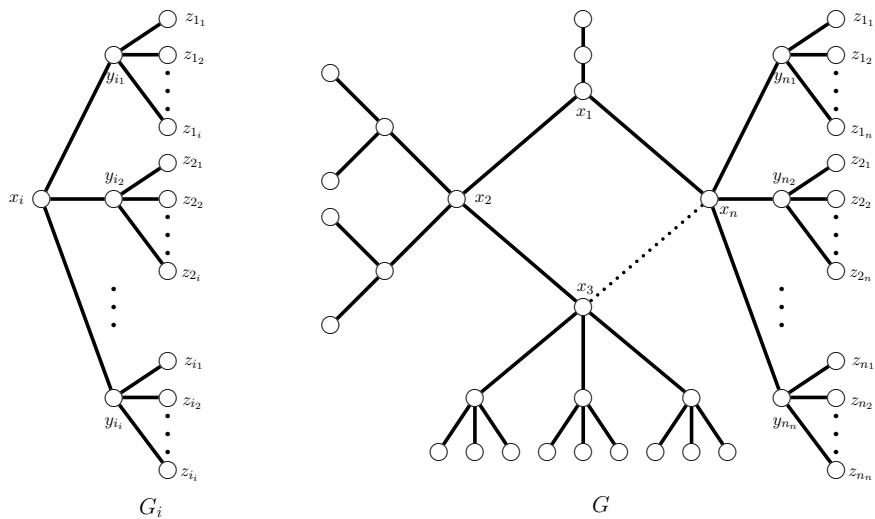


\newpage
\medskip

\end{document}